\theoremstyle{definition}
\newtheorem{thm}{Theorem}[section]
\newtheorem{cor}[thm]{Corollary}
\newtheorem{lem}[thm]{Lemma}
\newtheorem{rmk}[thm]{Remark}
\newtheorem{prop}[thm]{Proposition}
\numberwithin{equation}{section}
\address{National Taiwan University, Department of Mathematics\\
No.1, Sec. 4, Roosevelt Rd., Da'an Dist., Taipei City, Taiwan, 10617}
\email{jechang@ntu.edu.tw}
\address{National Center for Theoretical Science, Mathematics Division\\
No.1, Sec. 4, Roosevelt Rd., Da'an Dist., Taipei City, Taiwan, 10617}
\email{yklue@ncts.ntu.edu.tw}
\title{Uniqueness of regular shrinkers with 2 closed regions}
\author{Jui-En Chang, Yang-Kai Lue}
\date{\today}
\begin{document}
\maketitle
\begin{abstract}
Regular shrinkers describe blow-up limits of a finite-time
singularity of the motion by curvature of planar network of curves.
This follows from Huisken's monotonicity formula. In this paper, we
show that there is only one regular shrinker with 2 closed regions.
This regular shrinker is the Cisgeminate eye. Moreover, we find some
degenerate regular shrinkers with 2 closed regions.
\end{abstract}

\section{Introduction}

A regular network is an embedded network which satisfies the Herring
condition: all multi-points are of degree 3 and the angles between
curves are $\frac{2\pi}{3}$. The reader can refer to \cite{MNPS} for
detail. Given an initial regular network $\Gamma_0$, a network flow
is a family of networks with Herring condition and fixed boundary
points that satisfy that $(\frac{\partial X_i}{\partial
t})^\perp=\bar{k_i}$. Here $X_i$ is the position vector and
$\bar{k_i}$ is the curvature vector of the curve $\gamma_i$.
Recently, many researchers have studied this flow in
\cite{AGH,BN,INS,MMN,MNP,MNP2,MNPS,MNT,P}.

The short time existence of this flow of an initial regular $C^2$
network with a triple junction is proved by L. Bronsard and F.
Reitich in \cite{BR}. Recently, the short time existence of this
flow of an initial regular $C^2$ network with multiple junctions is
proved by C. Mantegazza, M. Novaga, and A. Pluda in \cite{MNP2}.
Using a parabolic rescaling procedure at the singular time and
Huisken's monotonicity formula \cite{Hu}, there is a subsequence
which converges to a possibly degenerate regular network. This limit
network shrinks self-similarly to the origin and it may be an open
network. An open regular network is called a regular shrinker if it
satisfies
\begin{equation}
\bar{k}+x^\perp=0
\end{equation}
at any point, where $\bar{k}$ is the curvature vector. A regular
shrinker will move by homothety with respect to the origin under the
network flow. Such a network describes the behavior of the flow at
the singular time.

We are interested in the classification of regular shrinkers. If
there are no triple junctions, the network flow is the curve
shortening flow and the self-similarly shrinking solution of the
flow is described in the work of U. Abresch and J. Langer \cite{AL}.
They classify all immersed curves and show that the only embedded
self-similarly shrinking curves are a line or a circle. A regular
shrinker with exactly 1 triple junction must be a standard triod or
a Brakke spoon, where the latter one is first described in the work
of K. Brakke \cite{B}. The Brakke spoon is shown to be the blow-up
limit for all spoon-shaped network in the work of Pluda \cite{P}.
The classification of regular shrinker with 1 closed region is done
by X. Chen and J. -S. Guo \cite{CG}. From the work of Mantegazza,
Novaga, and Pluda \cite{MNP}, for an evolving network with at most
two triple junctions, the multiplicity-one conjecture holds. P.
Baldi, E. Haus, and Mantegazza \cite{BHM,BHM2} exclude the
$\Theta$-shaped network. Together with the work by Chen and Guo
\cite{CG}, all regular shrinkers with 2 triple junctions are
completely characterized. There are only 2 such networks: the lens
and the fish. This classification is used to study the general
behavior of networks with 2 triple junctions in the work of
Mantegazza, Novaga, Pluda \cite{MNP}. The lens is shown to be the
rescaling limit of any flow starting from a symmetric lens-shaped
network in \cite{AGH} and the work of G. Bellettini and Novaga
\cite{BN}. The appendix of \cite{MNPS} contains a collection of all
known regular shrinkers and some possible numerical results.

Apart from the cases described above, the classification of regular
shrinkers remains open. In this paper, we complete the
classification of all regular shrinkers with 2 closed regions. We
establish the following result.
\begin{thm}
The only regular shrinker with 2 closed regions is the Cisgeminate
eye.
\end{thm}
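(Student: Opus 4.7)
The plan is to combine a topological enumeration of the possible combinatorial types of the network with an ODE analysis of the shrinker equation on each arc.

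First, I would enumerate the admissible combinatorial types. For a regular shrinker with $R$ bounded regions, $V$ trivalent junctions, $E$ edges and $U$ unbounded ends, Euler's formula and the degree relation $3V = 2E - U$ force $V = 2(R-1)+U$ for $U \ge 1$ and $V = 2(R-1)$ for $U = 0$. Setting $R = 2$, the case $U = 0$ reduces to the $\Theta$-network, which is already excluded by Baldi--Haus--Mantegazza \cite{BHM,BHM2}; the remaining cases are $(V,E,U) = (3,5,1), (4,7,2), (5,9,3), \ldots$ For each count there are only finitely many planar embeddings obtained by specifying self-loops and multi-edges, and I would list them. A further reduction comes from the fact that an unbounded end of a regular shrinker must be a straight half-line through the origin, which restricts both the directions of the rays and the arrangement at the junctions to which they attach.

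Second, I would exploit the fact that each bounded edge is a piece of an Abresch--Langer curve, controlled by the first integrals of the shrinker ODE $k + \langle x, \nu\rangle = 0$. The turning angle along each arc and the values of $|x|^2$ at its endpoints are related through the Abresch--Langer energy. The Herring condition pins the tangent directions at each triple junction, and Gauss--Bonnet applied to each of the two closed regions yields
\begin{equation*}
\int_{\partial \Omega_j} k\, ds + \sum_{\text{junctions on }\partial \Omega_j} \frac{\pi}{3} = 2\pi, \qquad j = 1, 2.
\end{equation*}
Combined with the Chen--Guo \cite{CG} analysis of Abresch--Langer arcs meeting triple-junction boundary data, this should leave only a short list of candidate shapes for the arcs bounding each region.

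Third, I would proceed type by type. For the symmetric type corresponding to the Cisgeminate eye I would verify the angle and $|x|^2$ compatibility conditions and then use reflection symmetry to pin down the unique realization up to rotation. For every other topological type I would derive a contradiction, either from the angle-sum identity being incompatible with the turning-angle spectrum of Abresch--Langer arcs, or from a monotonicity argument on $|x|^2$ along an arc joining two junctions. Configurations with a self-loop or with an unbounded ray attached to a junction shared by a closed region should be eliminated by comparing the $|x|^2$ value forced at that junction with the asymptotic behavior of the ray. The main obstacle I anticipate is this last step in the mid-complexity range --- three or four triple junctions with one or two rays, where the two closed regions share a junction or a single edge. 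For these the angle-sum identity alone leaves a family of formal candidates, and ruling them out will require a joint analysis of the Abresch--Langer ODE on each bounding arc together with the precise $|x|^2$ values at the shared junctions. This multi-arc bookkeeping, rather than any single clean inequality, is where the real work is expected to lie.
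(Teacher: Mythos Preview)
Your high-level strategy---enumerate combinatorial types, then analyse each arc as a piece of an Abresch--Langer curve and eliminate types one by one---is the same route the paper takes. But what you have written is a plan, not a proof, and it is missing the two structural reductions that do almost all of the work.

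First, your Euler-formula list $(V,E,U)=(3,5,1),(4,7,2),(5,9,3),\ldots$ does not terminate, and ``for each count there are only finitely many planar embeddings'' does not by itself pin down the shape. The paper does not enumerate in this way: it proves a star-shapedness lemma for $\bigcup_i\overline{F_i}$ and uses it to show that the two closed regions must share exactly one edge, so the core is a $\Theta$ with rays attached only to the two outer arcs (Theorem~\ref{topology}). Gauss--Bonnet then caps each cell at five edges. Your proposal contains neither the star-shaped argument nor the exclusion of one region enclosing the other, so the reduction to $\Theta$-with-rays is not in hand.

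Second, and more seriously, the paper's engine is an origin-location dichotomy you do not mention: one of the two cells does not contain the origin, and a chain of phase-plane estimates forces that cell to be a $4$-cell (Section~4). The other cell is then a $2$-, $3$-, $4$-, or $5$-cell, and each case is killed by comparing $\theta_{\mathrm{up}}+\theta_{\mathrm{down}}=2\pi$ against explicit numerical lower bounds on the partial-period integrals $h_1,h_2,h_3$---for instance $h_1+2h_2>0.7789\pi$ for every energy (Theorem~\ref{h12h2}), and sharper bounds on the short interval $I_A$. These inequalities, established by careful potential-function estimates and some calculator-assisted casework, are what actually eliminate the non-Cisgeminate types; ``angle-sum identity'' and ``monotonicity of $|x|^2$'' are not enough. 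Uniqueness in the surviving $4$--$4$ case then comes from a monotonicity lemma for $(h_1+2h_2)(c)$ and $\Delta\theta_{P(R_0)A}(c)$ on $I_A$ (Lemmas~\ref{lem:increase1}, \ref{lem:increaseh12h2}). Your proposal anticipates neither the functions $h_i$ nor the need for such quantitative bounds, and the sentence about eliminating ``an unbounded ray attached to a junction shared by a closed region'' is off---rays \emph{do} attach to boundary junctions of closed regions in the Cisgeminate eye.
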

\begin{figure}[h]
  \centering
    \includegraphics[width=5cm]{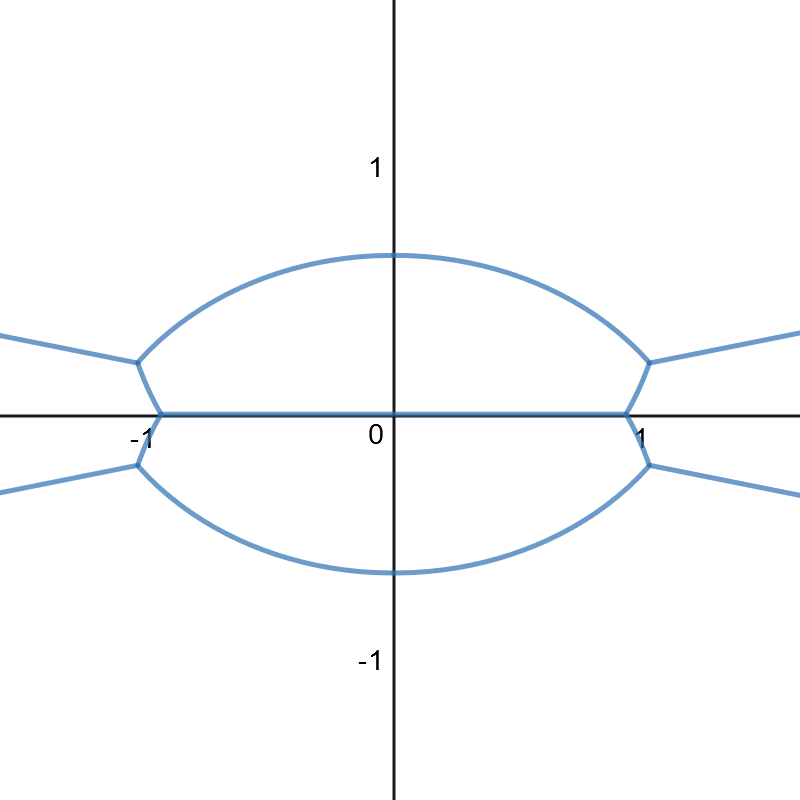}
  \caption{Cisgeminate eye proposed in \cite{MNPS}}
\end{figure}

The paper is organized as follows. For any regular shrinker, it must
be Abresch-Langer curves which intersect at triple junctions with
angle $\frac{2\pi}{3}$. In section 2, we introduce the phase space
to describe the behavior of Abresch-Langer curves. We also define
some terminology which will be used throughout this article. In
section 3, we focus on the possible topology of such networks and
show that the topology must be a $\Theta$-shaped network with rays
attached. Among the 2 closed regions, at least one of them does not
contain the origin. In section 4, using the estimation of change of
angle in \cite{BHM2}, we are able to show the region which does not
contain the origin must be a 4-cell. Therefore, the topology of the
network must be a 4-cell attached to either a 2,3,4 or 5 cell. In
section 5, we eliminate the possibility for the other cell to be
either a 5-cell or a 2-cell. In section 6, we deal with the
remaining case and establish the uniqueness of such a network. In
section 7, we relax the condition to allow the regular shrinker to
be degenerate and find some solutions for degenerate regular
shrinkers. Some of the solutions may have curves with multiplicity
greater than 1.

\section*{Acknowledgement}
The authors would like to thank Prof. Yng-Ing Lee for valuable
discussion.

\section{Phase plane of Abresch-Langer curves}
For a curve $\gamma(s)$ going around the origin in the
counterclockwise direction, let $R$ be the distance to the origin
and $\theta$ be the angle in polar coordinates. Let $\psi$ be the
signed angle from $\gamma$ to $\gamma_s$. We have
$0\leq\psi\leq\pi$. The following expression in terms of $R-\psi$ is
derived in the work of Chen and Guo \cite{CG}. For any curve, from
the definition of $\psi$, we hav e $\frac{dR}{ds}=\cos\psi$,
$\frac{d\theta}{ds}=\frac{1}{R}\sin\psi$. For $\sin\psi\neq0$,
dividing the equations yields $\frac{dR}{d\theta}=R\cot\psi$. Let
$\phi$ be the angle of the unit tangent vector. On a self-similarly
shrinking curve, we have $\frac{d\phi}{ds}=k=\langle
\gamma,N\rangle=R\sin\psi$. Note that $\phi=\theta+\psi$. Therefore,
$\frac{d\psi}{ds}=\frac{d\phi}{ds}-\frac{d\theta}{ds}=(R-\frac{1}{R})\sin\psi$.
Combining the equation involve $R$ and $\psi$, we have
$(R-\frac{1}{R})dR=\cot\psi d\psi$. Therefore, on a self-similarly
shrinking curve, we have
\begin{equation}\label{eq:conservation_law}
K(R)=c\sin\psi,
\end{equation}
for some $c\geq 1$, where $K(R)$ is given by
\begin{equation}
K(R)=\frac{\exp(\frac{R^2-1}{2})}{R}.
\end{equation}
We define $c$ to be the energy of the curve. For the special case
$\sin\psi=0$, $\theta$ is a constant and the solution is a line
through the origin. We define the energy for such curve to be
infinite.

From now on, we call a curve which satisfies $\bar{k}+x=0$ an
AL-curve. Define $R-\psi$ plane as the phase plane and we will
consider the trajectory $K(R)=c\sin\psi$ for some $c\geq1$. The
function $K(R)$ is strictly decreasing in $(0,1)$, strictly
increasing in $(1,\infty)$ and attains its absolute minimum 1 at
$R=1$. Therefore, $\psi$ attains the maximum
$\pi-\sin^{-1}(\frac{1}{c})$ and the minimum
$\sin^{-1}(\frac{1}{c})$ at $R=1$.

Using the phase plane, we want to compute the change of angle
$\theta$ when we move from one point to another point on the
trajectory. If we use $R$ as the variable, it can be expressed as
\begin{equation}\label{eq:int_R_for_theta}
\Delta\theta=\int_{R_1}^{R_2}\frac{d\theta}{dR}dR=\int_{R_1}^{R_2} \frac{\tan\psi}{R} dR=\int_{R_1}^{R_2}\frac{K(R)}{R\sqrt{c^2-K(R)^2}}dR.
\end{equation}
Note that if we fix $R_1$ and $R_2$, $\Delta\theta$ is monotonically
decreasing with respect to $c$.

There are expressions of $\Delta\theta$ in terms of other variables.
$\Delta\theta$ and $\Delta\phi$ are related by
$\Delta\theta=\Delta\phi-\Delta\psi$, where $\Delta\psi$ can be
determined by the starting and the ending point on the phase plane.
Let $\eta=1+2\log c$. Taking log in both side of the equation
(\ref{eq:conservation_law}), we obtain another expression of
conservation law with respect to $\eta$.
\begin{align}\label{eq:conservation_law1}
R^2-2\log k=\eta.
\end{align}
Consider the lower half of the trajectory where
$0<\psi<\frac{\pi}{2}$. Since
$\frac{dk}{ds}=\frac{d}{ds}(R\sin\psi)=R^2\cos\psi\sin\psi$, using
the conservation law (\ref{eq:conservation_law1}), it gives
\begin{equation}
\Delta\phi=\int_{k_1}^{k_2}\frac{d\phi}{dk}dk=
\int_{k_1}^{k_2}\frac{1}{R\cos\psi}dk=
\int_{k_1}^{k_2}\frac{1}{\sqrt{R^2-k^2}}dk=
\int_{k_1}^{k_2}\frac{1}{\sqrt{\eta-V(k)}}dk,
\end{equation}
where $V(k)=k^2-2\log k$ and the third equality comes from
$R^2\cos^2\psi=R^2-R^2\sin^2\psi=R^2-k^2$. The potential $V(k)$
attains its minimum at $k=1$. Also, for a fixed $\eta$, we define
$k_{\mathbf{min}}$ to be the unique $k<1$ which satisfies
$V(k)=\eta$. The variable $\eta$ can be regarded as the energy in
terms of $k$. Note that this equation is derived in \cite{C}.

The following are expressions in terms of $\psi$. Since the
trajectory is not symmetric with respect to the $R=1$ line, we need
to deal with $R<1$ case and $R>1$ case separately. Let $R=R^-(s)$
and $R=R^+(s)$ be the two inverses of $s=K(R)$. The domains of
$R^-$, $R^+$ are both $(1,\infty)$. The range of $R^-$ and $R^+$ are
$(0,1)$, $(1,\infty)$ respectively. The change of angle,
$\Delta\theta$, is given by
\begin{equation}
\begin{split}
\Delta\theta&=\int_{\psi_1}^{\psi_2}\frac{d\theta}{d\psi}d\psi=\int_{\psi_1}^{\psi_2}\frac{d\psi}{1-[R^-(c\sin\psi)]^2},\\
\Delta\theta&=\int_{\psi_1}^{\psi_2}\frac{d\theta}{d\psi}d\psi=\int_{\psi_1}^{\psi_2}\frac{d\psi}{[R^+(c\sin\psi)]^2-1},\\
\end{split}
\end{equation}
for $R<1$ and $R>1$, respectively.

\begin{lem}\label{lem:left_right_ineq}
For any $x>1$, we have $1-[R^-(x)]^2<[R^+(x)]^2-1$. Therefore, for
$\sin^{-1}(\frac{1}{c})\leq\psi_1<\psi_2\leq\pi-\sin^{-1}(\frac{1}{c})$,
\begin{equation}\label{ineq:LRineq}
\int_{\psi_1}^{\psi_2}\frac{d\psi}{1-[R^-(c\sin\psi)]^2}>\int_{\psi_1}^{\psi_2}\frac{d\psi}{[R^+(c\sin\psi)]^2-1}.
\end{equation}
\end{lem}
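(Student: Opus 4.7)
The plan is to first prove the pointwise inequality $[R^+(x)]^2 + [R^-(x)]^2 > 2$ by an elementary analysis of the level sets of $K$, then integrate. Set $a = R^-(x)$ and $b = R^+(x)$, so $0 < a < 1 < b$ and $K(a) = K(b) = x$. Taking logarithms in the definition of $K$, the relation $K(a) = K(b)$ is equivalent to $a^2 - 2\log a = b^2 - 2\log b$, i.e.\ $f(a^2) = f(b^2)$ with $f(t) := t - \log t$. The desired inequality $1 - a^2 < b^2 - 1$ is just $a^2 + b^2 > 2$, so the problem reduces to the following purely analytic claim: if $f(u) = f(v)$ with $0 < u < 1 < v$, then $u + v > 2$.

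I will reduce $u + v > 2$ to showing $f(u) > f(2 - u)$ for every $u \in (0, 1)$; since $f$ is strictly increasing on $(1, \infty)$ and both $v$ and $2 - u$ lie in that interval, the inequality $f(v) = f(u) > f(2-u)$ will then force $v > 2 - u$. To establish $f(u) > f(2-u)$ I study $g(u) := f(u) - f(2-u)$ on $(0, 1]$. A direct computation gives $g(1) = 0$, $g'(u) = 2 - 1/u - 1/(2-u)$, $g'(1) = 0$, and $g''(u) = 1/u^2 - 1/(2-u)^2$. For $u \in (0, 1)$ we have $0 < u < 2 - u$, hence $g''(u) > 0$. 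Thus $g'$ is strictly increasing, which combined with $g'(1) = 0$ yields $g'(u) < 0$ on $(0,1)$; so $g$ is strictly decreasing on $(0, 1]$ and $g(u) > g(1) = 0$. Unravelling the substitutions gives $1 - [R^-(x)]^2 < [R^+(x)]^2 - 1$ for every $x > 1$.

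For the integral statement, note that on the open interval $\sin^{-1}(1/c) < \psi < \pi - \sin^{-1}(1/c)$ one has $c\sin\psi > 1$, so the pointwise inequality applies with $x = c\sin\psi$ and yields $0 < \frac{1}{[R^+(c\sin\psi)]^2 - 1} < \frac{1}{1 - [R^-(c\sin\psi)]^2}$. Both integrands are integrable at the endpoints $\psi = \sin^{-1}(1/c)$ and $\psi = \pi - \sin^{-1}(1/c)$: since $K(1) = 1$, $K'(1) = 0$, and $K''(1) > 0$, one has $K(R) - 1 \sim (R-1)^2$ near $R = 1$, so both $1 - [R^-]^2$ and $[R^+]^2 - 1$ behave like $\sqrt{c\sin\psi - 1}$, which integrates against $d\psi$. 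Integrating the strict pointwise inequality over $[\psi_1, \psi_2]$ then produces the strict integral inequality claimed.

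The main obstacle is the reduction $u + v > 2$ under the constraint $f(u) = f(v)$. A naive monotonicity comparison on $K$ (or $f$) fails because the asymmetry of $K$ around $R = 1$ is a genuinely second-order effect; the symmetrization through $g(u) = f(u) - f(2-u)$, with the double zero $g(1) = g'(1) = 0$ and sign control on $g''$, is what captures this effect cleanly and is the real content of the lemma.
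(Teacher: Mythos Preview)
Your proof is correct and follows essentially the same approach as the paper: both arguments prove the pointwise inequality by a symmetrization comparison around the minimum of the convex function $t\mapsto t-\log t$ (equivalently $R\mapsto R^2-2\log R$), then integrate. The only cosmetic difference is that the paper works in the variable $R$ and first shows the slightly stronger $R^+-1>1-R^-$ via a single first-derivative computation, whereas you pass to $t=R^2$ and use a second-derivative argument to get $(R^+)^2+(R^-)^2>2$ directly.
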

\begin{proof}
Let $V(R)=R^2-2\log R$ and $\eta=1+2\log(x)$, we have
$V(R^+)=V(R^-)=\eta$. Let $\alpha=1-R^-$.  We have $0<\alpha<1$ and
define
$f(\alpha)=V(1+\alpha)-V(1-\alpha)=4\alpha-2\log(1+\alpha)+2\log(1-\alpha)$.
Since $\frac{d}{d\alpha}f(\alpha)=\frac{-4\alpha^2}{1-\alpha^2}<0$
for $0<\alpha<1$, we have $f(\alpha)< f(0)=0$. Therefore,
$V(R^+)=V(R^-)=V(1-\alpha)=V(1+\alpha)-f(\alpha)>V(1+\alpha)$. This
means $R^+>1+\alpha$ and $1-R^-=\alpha<R^+-1$. We obtain
\begin{equation}\label{ineq:left_right_diff}
1-(R^-)^2=(1-R^-)(R^-+1)<(R^+-1)(R^++1)=(R^+)^2-1.
\end{equation}
The inequality (\ref{ineq:LRineq}) is an immediate consequence of
the inequality (\ref{ineq:left_right_diff}).
\end{proof}

Now, we consider the behavior of the network at a triple junction.
On a trajectory satisfying $K(R)=c\sin\psi$, the points where
$\psi=\frac{\pi}{3}$, $\frac{2\pi}{3}$ are important. Define $A(c)$,
$B(c)$, $C(c)$, $D(c)$ be the points on the trajectory with
coordinates $(R^+(\frac{\sqrt{3}}{2}c),\frac{\pi}{3})$,
$(R^+(\frac{\sqrt{3}}{2}c),\frac{2\pi}{3})$,
$(R^-(\frac{\sqrt{3}}{2}c),\frac{2\pi}{3})$,
$(R^-(\frac{\sqrt{3}}{2}c),\frac{\pi}{3})$ respectively. Also,
define $M(c)=(1,\pi-\sin^{-1}(\frac{1}{c}))$,
$N(c)=(1,\sin^{-1}(\frac{1}{c}))$ to be the points with extreme
$\psi$ value.
\begin{figure}[ht]
\includegraphics[height=4cm]{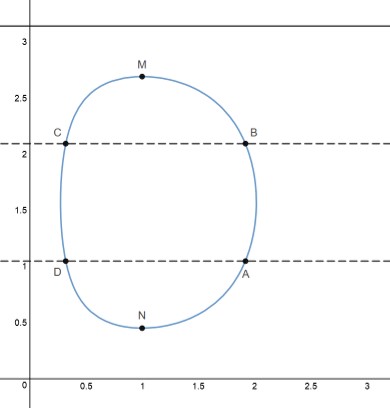}
\caption{The points A, B, C, D, M, N on the trajectory}
\end{figure}
\begin{lem}\label{same_energy}
If one of the AL-curves into a triple junction in a regular shrinker
is a ray or a line segment, the other 2 curves must have the same
energy. Therefore, if we move in the counterclockwise direction, the
corresponding point on the phase plane must switch from A to B or
from D to C on the trajectory at such a triple junction.
\end{lem}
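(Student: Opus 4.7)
My plan is to use the fact that any straight AL-curve must lie on a line through the origin, combine this with the $2\pi/3$ Herring condition at the junction, and then apply the conservation law $(\ref{eq:conservation_law})$.

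First I would argue that if one of the three curves at the triple junction $p$ is a line or a ray, then $\bar k \equiv 0$ along it, so the AL-equation forces $x^\perp \equiv 0$; equivalently the position vector is tangent at every point, so the curve lies on a line through the origin. In particular the tangent of this straight curve at $p$ is parallel to $\hat r = p/|p|$. By the Herring condition, the three outgoing unit tangent vectors at $p$ are separated by $2\pi/3$, and since one of them is $\pm \hat r$, the outgoing tangents of the remaining two AL-curves make angles of absolute value $2\pi/3$ or $\pi/3$ with the radial direction. In every case $|\sin\psi| = \sqrt{3}/2$ at $p$ for both of these non-straight curves.

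Next, choosing the natural parametrization so that $\sin\psi>0$ (the counterclockwise convention around the origin, since $\frac{d\theta}{ds} = \frac{\sin\psi}{R}$), the only possibilities with $\psi\in(0,\pi)$ are $\psi=\pi/3$ and $\psi=2\pi/3$, and a direct orientation check shows that one of the two non-straight curves produces each value. Since $R=|p|$ is common to both curves, the conservation law $K(R)=c\sin\psi$ gives
\[
\tfrac{\sqrt{3}}{2}\, c_1 = K(|p|) = \tfrac{\sqrt{3}}{2}\, c_2,
\]
so $c_1=c_2$ and the two non-straight curves share the same energy.

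For the phase-plane assertion, both phase points at $p$ now sit on the same trajectory at $R=|p|$, one with $\psi=\pi/3$ and the other with $\psi=2\pi/3$. By the definitions of $A,B,C,D$, these must be $\{A(c),B(c)\}$ when $R>1$ (the $R^+$ branch) or $\{D(c),C(c)\}$ when $R<1$ (the $R^-$ branch). Tracking which of the two non-straight curves has its natural parametrization ending at $p$ and which has it starting at $p$ identifies the counterclockwise switch as $A \to B$ (when $R>1$) or $D \to C$ (when $R<1$). The main subtlety I expect is precisely this last orientation bookkeeping, namely verifying that the sign requirement $\sin\psi>0$ really assigns $\psi=\pi/3$ to the "incoming" curve and $\psi=2\pi/3$ to the "outgoing" one, so that the transition runs in the stated direction; once that is pinned down, everything else follows directly from $K(R)=c\sin\psi$.
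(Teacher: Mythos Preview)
Your proposal is correct and follows essentially the same route as the paper: both arguments use that a straight AL-curve is radial (so $\psi=0$ or $\pi$), apply the $2\pi/3$ Herring condition to conclude the other two curves have $\psi=\pi/3$ and $\psi=2\pi/3$ at the common $R$-value, and then read off $c_1=c_2$ from $K(R)=c\sin\psi$. You supply more justification than the paper does (in particular for why the straight curve is radial and for the incoming/outgoing orientation assignment), but the underlying argument is the same.
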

\begin{proof}
At the triple junction, consider the value of $(R,\psi)$ for each of
the curves. Since the ray and the other 2 curves pass through this
triple junction, the $R$ value are the same. For a ray, we have
$\psi=0$ or $\pi$. The $\psi$ values are $\psi_1=\frac{\pi}{3}$ for
the incoming curve and $\psi_2=\frac{2\pi}{3}$ for the outgoing
curve. Therefore, the energy of the 2 curves are
\begin{equation}
    c_1=\frac{K(R)}{\sin\psi_1}=\frac{K(R)}{\sin\psi_2}=c_2.
\end{equation}
This argument also holds for the case that one AL-curve is a line
segment.
\end{proof}

\begin{rmk}
When $c<c_*=\frac{2}{\sqrt{3}}$, the trajectory does not intersect
the lines $\psi=\frac{\pi}{3}$, $\psi=\frac{2\pi}{3}$. In this case,
the points $A(c)$, $B(c)$, $C(c)$, $D(c)$ is undefined. However, we
can still define $M(c)$ and $N(c)$ where $R=1$ and $\psi$ attains
the extreme value.
\end{rmk}

From now on, for any 2 points $P$, $Q$ on the trajectory
$K(R)=c\sin\psi$, use the notation $\Delta\theta_{PQ}$ to express
the change of angle $\theta$ when we traverse the trajectory in the
counterclockwise direction from $P$ to $Q$ without achieving a
complete period. Define
\begin{equation}
\begin{split}
h_1(c)&=\Delta\theta_{CD}(c)=\int_\frac{\pi}{3}^\frac{2\pi}{3}\frac{d\psi}{1-[R^-(c\sin\psi)]^2},\\
h_2(c)&=\Delta\theta_{DA}(c)=\Delta\theta_{BC}(c),\\
h_3(c)&=\Delta\theta_{AB}(c)=\int_\frac{\pi}{3}^\frac{2\pi}{3}\frac{d\psi}{[R^+(c\sin\psi)]^2-1},
\end{split}
\end{equation}
for $c\geq c_*=\frac{2}{\sqrt{3}}$. Note that $c_*$ is the lowest
energy if the curve connects to a line at a regular triple junction.
We also define $\eta^*=1+2\log c^*=1+\log\frac{4}{3}$. Since $R^-$
is decreasing and $R^+$ is increasing, $h_1$ and $h_3$ are
decreasing functions of $c$. From lemma \ref{lem:left_right_ineq},
we have $h_1>h_3$. Use $T(c)$ to denote the change of angle of a
complete period. For $c\geq c_*$,
\begin{equation}
    T(c)=h_1(c)+2h_2(c)+h_3(c).
\end{equation}
Note from \cite{AL}, $T(c)$ is decreasing and
$\sqrt{2}\pi>T(c)>\pi$.
\begin{lem}
The function $h_1$, $h_2$, $h_3$ is defined on $(c_*,\infty)$ with
the following properties.
\begin{equation}
\lim_{c\to \infty} h_1(c)=\lim_{c\to \infty} h_2(c)=\frac{\pi}{3},
\end{equation}
\begin{equation}
\lim_{c\to \infty} h_3(c)=0.
\end{equation}
\end{lem}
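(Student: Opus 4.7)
The plan is to analyze the three functions separately using their integral representations and the uniform behavior of $R^\pm(c\sin\psi)$ as $c\to\infty$.

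For $h_1$ and $h_3$, the integration interval $[\pi/3,2\pi/3]$ keeps $\sin\psi\geq\sqrt{3}/2$, so $c\sin\psi\to\infty$ uniformly. Since $K(R)\to\infty$ both as $R\to 0^+$ and as $R\to\infty$, we get $R^-(c\sin\psi)\to 0$ and $R^+(c\sin\psi)\to\infty$ uniformly on $[\pi/3,2\pi/3]$. Bounded convergence then yields $h_1(c)\to\int_{\pi/3}^{2\pi/3}1\,d\psi=\pi/3$ and $h_3(c)\to 0$.

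For $h_2(c)=\Delta\theta_{DA}(c)$, I would first derive an integral formula. Tracing the trajectory counterclockwise from $D$, one descends the $R<1$ branch with $\psi$ decreasing from $\pi/3$ to $\sin^{-1}(1/c)$ (reaching $N$), then ascends the $R>1$ branch with $\psi$ increasing back to $\pi/3$ (reaching $A$). Combining the two expressions for $\Delta\theta$ stated earlier gives
\[
h_2(c)=\int_{\sin^{-1}(1/c)}^{\pi/3}\left(\frac{1}{1-[R^-(c\sin\psi)]^2}+\frac{1}{[R^+(c\sin\psi)]^2-1}\right)d\psi.
\]
The integrand has an integrable singularity at the lower endpoint, where $c\sin\psi=1$ and $R^\pm=1$. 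Using $K(1)=1$ and $K'(1)=0$, one has $K(R)=1+(R-1)^2+O((R-1)^3)$, hence $R^\pm(1+u)=1\pm\sqrt{u}+O(u)$, and each of $1-[R^-]^2$, $[R^+]^2-1$ is equivalent to $2\sqrt{c\sin\psi-1}$ near the singularity. Since $c\sin\psi-1\approx \sqrt{c^2-1}\,(\psi-\sin^{-1}(1/c))$ there, I would split the integral at $\psi_c=\sin^{-1}(1/c)+c^{-1/2}$: the near-singularity piece is of size $O(c^{-3/4})$, and on the bulk interval $[\psi_c,\pi/3]$ the product $c\sin\psi$ tends to infinity uniformly, so the first integrand converges uniformly to $1$ and the second to $0$. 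Hence the bulk contributes $\pi/3$ in the limit and the singular piece vanishes, giving $h_2(c)\to\pi/3$.

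The only real obstacle is the endpoint singularity in $h_2$: the splitting scale must be small enough to yield uniform convergence on the bulk, yet large enough to kill the singular estimate. The scale $c^{-1/2}$ satisfies both requirements. The remaining steps reduce to routine bounded or dominated convergence.
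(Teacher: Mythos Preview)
Your treatment of $h_1$ and $h_3$ matches the paper exactly: bounded convergence on the fixed interval $[\pi/3,2\pi/3]$, using $R^-(c\sin\psi)\to 0$ and $R^+(c\sin\psi)\to\infty$ uniformly there.

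For $h_2$ the two proofs diverge. The paper does not analyze $h_2$ directly at all; it simply invokes the identity $T(c)=h_1(c)+2h_2(c)+h_3(c)$ together with the Abresch--Langer result $\lim_{c\to\infty}T(c)=\pi$, so that $h_2(c)\to\tfrac12(\pi-\tfrac{\pi}{3}-0)=\tfrac{\pi}{3}$ falls out in one line from the limits for $h_1$ and $h_3$ already computed. Your approach instead writes $h_2$ as an explicit $\psi$-integral over $[\sin^{-1}(1/c),\pi/3]$ and handles the integrable endpoint singularity at $c\sin\psi=1$ by splitting at $\psi_c=\sin^{-1}(1/c)+c^{-1/2}$; the local expansion $K(R)=1+(R-1)^2+O((R-1)^3)$ gives the correct $1/\sqrt{\psi-\psi_0}$ behavior, the near-singular piece is $O(c^{-3/4})$, and on the bulk $c\sin\psi\gtrsim 1+c^{1/2}\to\infty$ uniformly so the integrand converges to $1+0$. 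This is correct and has the advantage of being self-contained---it does not import the Abresch--Langer period asymptotic---but it is substantially longer than the paper's one-line deduction. If you are allowed to cite the period limit, the paper's route is clearly preferable; if not, your argument is the right kind of replacement.
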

\begin{proof}
This lemma is established in \cite{CG}. We include the proof here
for the completeness. Since $\underset{{s\to\infty}}\lim R^-(s)=0$,
$\underset{s\to\infty}\lim R^+(s)=\infty$,
\begin{equation}
\begin{split}
    \lim_{c\to\infty}\int_\frac{\pi}{3}^\frac{2\pi}{3}\frac{d\psi}{1-[R^-(c\sin\psi)]^2}&=\int_\frac{\pi}{3}^\frac{2\pi}{3}\lim_{c\to\infty}\frac{d\psi}{1-[R^-(c\sin\psi)]^2}=\int_\frac{\pi}{3}^\frac{2\pi}{3}d\psi=\frac{\pi}{3},\\
    \lim_{c\to\infty}\int_\frac{\pi}{3}^\frac{2\pi}{3}\frac{d\psi}{[R^+(c\sin\psi)]^2-1}&=\int_\frac{\pi}{3}^\frac{2\pi}{3}\lim_{c\to\infty}\frac{d\psi}{[R^+(c\sin\psi)]^2-1}=\int_\frac{\pi}{3}^\frac{2\pi}{3}0\cdot d\psi=0.
\end{split}
\end{equation}
Using the result from \cite{AL} about the change of angle of a
complete period, we have
$\underset{c\to\infty}{\lim}(h_1+2h_2+h_3)=\pi$. We can deduce
$\underset{c\to\infty}{\lim}h_2=\frac{\pi}{3}$.
\end{proof}

We are now going to estimate the change of angle which corresponds
to each part of the trajectories. The following estimation as a
lower bound of the potential function $V(k)$ is needed.
\begin{lem}\label{lower2nd}
For $k_{\mathbf{min}}\leq k_1<k\leq 1$, $k_0\leq k_1$, let
$\bar{V}=(1+\frac{1}{k_0})(k-1)^2+H$, where $H$ is chosen such that
$\bar{V}(k_1)=V(k_1)$. We have $V(k)>\bar{V}(k)$ for all
$k\in(k_1,1)$. Therefore, for $k_1<k_2\leq1$,
\begin{equation}
    \Delta\phi=\int_{k_1}^{k_2}\frac{dk}{\sqrt{\eta-V(k)}}>\frac{1}{\sqrt{1+\frac{1}{k_0}}}\left(\sin^{-1}(\frac{1-k_1}{\sqrt{\frac{\eta-H}{1+\frac{1}{k_0}}}})-\sin^{-1}(\frac{1-k_2}{\sqrt{\frac{\eta-H}{1+\frac{1}{k_0}}}})\right).
\end{equation}
For the special case $k_1=k_{\mathbf{min}}$,
$\sqrt{\frac{\eta-H}{1+\frac{1}{k_0}}}=1-k_{\mathbf{min}}$.
\end{lem}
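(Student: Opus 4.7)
The main point is to compare $V$ with the parabolic barrier $\bar V$ by studying the sign of $g(k) := V(k) - \bar V(k)$ on $(k_1,1]$. By construction $g(k_1)=0$. A short computation gives
\begin{equation*}
g'(k) \;=\; V'(k) - \bar V'(k) \;=\; \bigl(2k - \tfrac{2}{k}\bigr) - 2\bigl(1+\tfrac{1}{k_0}\bigr)(k-1),
\end{equation*}
which factors as
\begin{equation*}
g'(k) \;=\; \frac{2}{k\,k_0}(k-1)(k_0-k).
\end{equation*}
For $k\in(k_0,1)$ both factors $(k-1)$ and $(k_0-k)$ are negative, so $g'(k)>0$. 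Since the hypothesis $k_0\le k_1$ guarantees $(k_1,1)\subset(k_0,1)$, $g$ is strictly increasing on $(k_1,1]$, and therefore $g(k)>g(k_1)=0$ throughout $(k_1,1]$. This proves the pointwise inequality $V(k)>\bar V(k)$.

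For the integral bound, the pointwise inequality immediately yields $(\eta-V(k))^{-1/2} > (\eta-\bar V(k))^{-1/2}$ on $(k_1,k_2]$, and one then evaluates the comparison integral explicitly. Writing $a^2 := \frac{\eta-H}{1+1/k_0}$, the quadratic form of $\bar V$ gives
\begin{equation*}
\eta - \bar V(k) \;=\; \bigl(1+\tfrac{1}{k_0}\bigr)\bigl(a^2 - (1-k)^2\bigr),
\end{equation*}
so after the substitution $u=1-k$ the integral reduces to the standard $\int du/\sqrt{a^2-u^2}=\sin^{-1}(u/a)$, producing exactly the claimed lower bound
\begin{equation*}
\int_{k_1}^{k_2}\frac{dk}{\sqrt{\eta-\bar V(k)}} \;=\; \frac{1}{\sqrt{1+\tfrac{1}{k_0}}}\!\left(\sin^{-1}\!\tfrac{1-k_1}{a} - \sin^{-1}\!\tfrac{1-k_2}{a}\right).
\end{equation*}

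Finally, for the special case $k_1=k_{\mathbf{min}}$ one notes that $V(k_{\mathbf{min}})=\eta$ by definition of $k_{\mathbf{min}}$, and the matching condition $\bar V(k_1)=V(k_1)$ then forces $\bar V(k_{\mathbf{min}})=\eta$ as well; substituting into $\eta-\bar V(k_{\mathbf{min}}) = (1+1/k_0)(a^2-(1-k_{\mathbf{min}})^2)=0$ gives $a = 1-k_{\mathbf{min}}$, as asserted.

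The only real content of the argument is the factorization of $g'$; everything afterwards is a routine one-variable antiderivative. The step I expect to need the most care in writing up is verifying that $g'$ has the claimed sign throughout $(k_1,1)$ using only the hypothesis $k_0\le k_1\le k\le 1$, which is exactly what the factorization makes transparent.
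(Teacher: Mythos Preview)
Your proof is correct and follows essentially the same approach as the paper: compare $V'$ with $\bar V'$ on $(k_1,1)$ (the paper writes $V'(k)=2(1+\tfrac{1}{k})(k-1)>2(1+\tfrac{1}{k_0})(k-1)=\bar V'(k)$, which is exactly your factorization of $g'$), use $V(k_1)=\bar V(k_1)$ to conclude $V>\bar V$, and finish by the explicit antiderivative. The only difference is cosmetic---you carry out the factorization of $g'$ more explicitly than the paper does.
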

\begin{proof}
For $k_{\mathbf{min}}\leq k_1<k\leq 1$, we have
\begin{equation}
    V'(k)=2(k-\frac{1}{k})=2(1+\frac{1}{k})(k-1)>2(1+\frac{1}{k_0})(k-1)=\bar{V}'(k).
\end{equation}
Use $\bar{V}(k_1)=V(k_1)$, we can deduce $V(k)>\bar{V}(k)$ for all
$k\in(k_1,1)$. We can obtain the estimation of the integral by
direct computation.
\end{proof}

We need a lower bound for $h_1+2h_2$. This quantity plays an
important role when we are excluding some impossible cases.

\begin{thm}\label{h12h2}
For every $\eta\geq \eta_*$(i.e. $c\geq c_*$), we have
$h_1(c)+2h_2(c)>0.7789\pi(>\frac{2\pi}{3})$. Furthermore, if
$\eta\geq\frac{4}{3}$, we have $h_1+2h_2>0.9456\pi$. For
$\eta\geq1.38$, we have $h_1+2h_2>\pi$.
\end{thm}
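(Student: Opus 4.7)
The plan is to reduce $h_1+2h_2$ to a single integral over $k$ and apply Lemma \ref{lower2nd}. Using $\phi = \theta + \psi$: along the arc $D \to N \to A$ the angle $\psi$ returns to $\pi/3$, so $\Delta\psi_{DA}=0$ and $h_2 = \Delta\phi_{DA}$; along the $C \to D$ arc one has $\Delta\psi_{CD} = -\pi/3$, so $h_1 = \Delta\phi_{CD} + \pi/3$. Using $R\cos\psi = \pm\sqrt{\eta-V(k)}$ (with sign matching that of $\cos\psi$) and splitting the $C\to D$ arc at the $\psi = \pi/2$ turning point, where $k = k_{\mathbf{min}}$, gives $\Delta\phi_{CD} = 2\int_{k_{\mathbf{min}}}^{k_C}\frac{dk}{\sqrt{\eta-V(k)}}$. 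Since $k_C = k_D = R^-\sqrt{3}/2$ by construction, combining with $h_2 = \int_{k_D}^{k_A}\frac{dk}{\sqrt{\eta-V(k)}}$ yields the clean identity
\[
h_1 + 2h_2 \;=\; 2\int_{k_{\mathbf{min}}}^{k_A}\frac{dk}{\sqrt{\eta - V(k)}} + \frac{\pi}{3},
\]
where $k_A = R^+\sqrt{3}/2$ and $R^+ = R^+(\tfrac{\sqrt{3}}{2}c)$.

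Next, Lemma \ref{lower2nd} with $k_1 = k_0 = k_{\mathbf{min}}$ supplies an explicit lower bound (in $\arcsin$ form) for the sub-integral from $k_{\mathbf{min}}$ to $\min(k_A, 1)$. When $k_A > 1$, which occurs once $\eta$ is somewhat above $\eta_*$, this is supplemented by a lower bound for $\int_1^{k_A}\frac{dk}{\sqrt{\eta-V(k)}}$; one convenient choice uses the monotonicity $V(k) \leq V(k_A) = \eta - (R^+)^2/4$ on $[1, k_A]$, yielding $\int_1^{k_A}\frac{dk}{\sqrt{\eta-V(k)}} \geq \sqrt{3} - 2/R^+$. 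Together, these give an explicit lower bound $L(\eta)$ for $h_1+2h_2$.

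It remains to verify $L(\eta_*) > 0.7789\pi$, $L(4/3) > 0.9456\pi$, and $L(1.38) > \pi$, and to show that each bound extends to the half-line $\eta \geq \eta_*$, $\eta \geq 4/3$, $\eta \geq 1.38$. At each threshold, $k_{\mathbf{min}}$ and $R^+$ are determined by $V(k_{\mathbf{min}}) = \eta$ and $V(R^+) = \eta - \log(4/3)$, and $L$ evaluates numerically in closed form.

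The main obstacle lies in extending from threshold to half-line. The Lemma \ref{lower2nd} prefactor $1/\sqrt{1+1/k_{\mathbf{min}}}$ weakens as $\eta$ grows (since $k_{\mathbf{min}}$ decreases), while the $[1, k_A]$ contribution strengthens, so net monotonicity of $L(\eta)$ is delicate. I would address this by either (i) choosing $k_0$ in Lemma \ref{lower2nd} to depend on $\eta$, (ii) splitting $[\eta_*,\infty)$ into sub-intervals and applying a tailored estimate on each, or (iii) proving monotonicity of $h_1+2h_2$ in $\eta$ directly by differentiation under the integral, after the substitution $k = k_{\mathbf{min}} + (1-k_{\mathbf{min}})\sin u$ regularizes the integrand at $k=k_{\mathbf{min}}$.
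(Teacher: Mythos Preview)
Your overall strategy matches the paper's: both derive the identity
\[
h_1 + 2h_2 = \frac{\pi}{3} + 2\int_{k_{\mathbf{min}}}^{k_A}\frac{dk}{\sqrt{\eta - V(k)}},\qquad k_A = \tfrac{\sqrt{3}}{2}R^+,
\]
apply Lemma~\ref{lower2nd} on the left part $[k_{\mathbf{min}},1]$, and handle the non-monotonicity in $\eta$ by splitting into sub-intervals with numerical checks. So conceptually you are on the same track.

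There is, however, a concrete error in your right-side estimate. You invoke $V(k)\le V(k_A)=\eta-(R^+)^2/4$ on $[1,k_A]$, which is correct, but this gives $\eta-V(k)\ge (R^+)^2/4$ and hence
\[
\int_1^{k_A}\frac{dk}{\sqrt{\eta-V(k)}}\;\le\;\frac{2}{R^+}(k_A-1)\;=\;\sqrt{3}-\frac{2}{R^+},
\]
an \emph{upper} bound, not the lower bound you claim. A pointwise upper bound on $V$ produces an upper bound on the integrand; you need a pointwise \emph{lower} bound on $V$ here. The paper handles this by bounding $V$ below on $[1,k_A]$ piecewise: $V\ge 1$ on $[1,\kappa]$ and $V\ge k^2-2\log\hat R+\log\tfrac{4}{3}$ on $[\kappa,k_A]$ for a suitable $\kappa$, which yields an explicit lower bound that grows with $\hat R$. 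Without a valid lower bound on $\int_1^{k_A}$ you cannot get past the $\eta\ge 1.38$ threshold, since the left-side contribution $\pi/\sqrt{1+1/k_{\mathbf{min}}}$ alone falls well below $\tfrac{2\pi}{3}$ for large $\eta$. Fixing this (e.g.\ by the paper's piecewise bound, or simply $V(k)\ge 1$ giving $\int_1^{k_A}\ge (k_A-1)/\sqrt{\eta-1}$) and then carrying out the sub-interval tabulation is exactly what the paper does.
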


\begin{proof}
We want to estimate $\Delta\phi$. Let
$\hat{R}(c)=R^+(\frac{\sqrt{3}}{2}c)\geq1$ be the $R$ value at point
$A$. Note that $\eta$ is strictly increasing with respect to
$\hat{R}$.

\emph{Case 1: $\eta\geq1.38$}.
\begin{equation}
    \Delta\phi=\int_{k_{\mathbf{min}}}^{\frac{\sqrt{3}}{2}\hat{R}}\frac{2dk}{\sqrt{\eta-V(k)}}=\int_{k_{\mathbf{min}}}^1\frac{2dk}{\sqrt{\eta-V(k)}}+\int_1^{\frac{\sqrt{3}}{2}\hat{R}}\frac{2dk}{\sqrt{\eta-V(k)}}.
\end{equation}
Define
$L(\hat{R})=\int_{k_{\mathbf{min}}}^1\frac{2dk}{\sqrt{\eta-V(k)}}$
and
$R(\hat{R})=\int_1^{\frac{\sqrt{3}}{2}\hat{R}}\frac{2dk}{\sqrt{\eta-V(k)}}$
to be the contribution of the left side and right side of the
potential function to $\Delta\phi$.

For the left side, from lemma \ref{lower2nd}, we have
\begin{equation}\label{ineq:left}
    L(\hat{R})\geq\int_{k_{\mathbf{min}}}^1\frac{2dk}{\sqrt{\eta-\bar{V}(k)}}=\frac{\pi}{\sqrt{1+\frac{1}{k_{\mathbf{min}}}}}.
\end{equation}
For the right side, let
$\kappa=\sqrt{2\log\hat{R}-2\log\frac{2}{\sqrt{3}}+1}$.
Let$\bar{V}(k)=k^2-2\log\hat{R}+2\log\frac{2}{\sqrt{3}}$ for
$k\in(\kappa,\frac{\sqrt{3}}{2}\hat{R})$ and $\bar{V}(k)=1$ for
$k\in(1,\kappa)$. We have $\bar{V}(k)<V(k)$. The right side is
bounded below by
\begin{equation}\label{ineq:right}
    R(\hat{R})\geq\int_1^{\frac{\sqrt{3}}{2}\hat{R}}\frac{2dk}{\sqrt{\eta-\bar{V}(k)}}=\frac{2\pi}{3}-2\sin^{-1}(\frac{\kappa}{\hat{R}})+2\frac{\kappa-1}{\sqrt{\eta-1}}.
\end{equation}
Note that
\begin{equation}
\begin{split}
\frac{d}{d\hat{R}}&(\frac{\kappa-1}{\sqrt{\hat{R}^2-\kappa^2}}-\sin^{-1}(\frac{\kappa}{\hat{R}}))=\frac{1}{\sqrt{R^2-\kappa^2}^3}(\frac{R^2-\kappa^3}{R}+\frac{\kappa-1}{R}).
\end{split}
\end{equation}
Use
$\frac{d}{d\hat{R}}(\hat{R}^\frac{4}{3}-\kappa^2)=\frac{4}{3}\hat{R}^\frac{1}{3}-2\hat{R}^{-1}$,
the minimum for $\hat{R}^\frac{4}{3}-\kappa^2$ happens when
$\hat{R}^\frac{4}{3}=\frac{3}{2}$. Therefore,
$\hat{R}^\frac{4}{3}-\kappa^2\geq\frac{3}{2}-\frac{3}{2}\log\frac{3}{2}+2\log\frac{2}{\sqrt{3}}-1>0$.
We have $\hat{R}^2>\kappa^3$ and
$\frac{d}{d\hat{R}}(\frac{\kappa-1}{\sqrt{\hat{R}^2-\kappa^2}}-\sin^{-1}(\frac{\kappa}{\hat{R}}))>0$.
$R(\hat{R})$ is bounded below by a function which is increasing when
$\hat{R}$ increases.

The following bound can be obtained by using a scientific calculator
for elementary functions.
\begin{center}
  \begin{tabular}{| c || c | c || c | c |}
    \hline
    $\eta$ & $k_{\mathbf{min}}>$ & Lower bound for $L(\hat{R})$ & $\hat{R}>$ & Lower bound for $R(\hat{R})$ \\ \hline
    1.38 & 0.60 & $0.6123\pi$ & 1.22 & $0.0585\pi$\\ \hline
    1.4 & 0.59 & $0.6091\pi$ & 1.24 & $0.0748\pi$\\ \hline
    1.45 & 0.56 & $0.5991\pi$ & 1.29 & $0.1123\pi$\\ \hline
    1.5 & 0.52 & $0.5848\pi$ & 1.34 & $0.1453\pi$ \\ \hline
    2 & 0.39 & $0.5927\pi$ & 1.64 & $0.2761\pi$\\ \hline
    3 & 0.32 & $0.4246\pi$ & 2.03 & $0.3631\pi$\\ \hline
    4 & 0.13 & $0.3392\pi$ & 2.32 & $0.4035\pi$\\ \hline
    5 & 0.08 & $0.2722\pi$ & 2.56 & $0.4287\pi$\\
    \hline
  \end{tabular}
\end{center}

When $\eta$ increases, $\hat{R}$ increases and $k_{\mathbf{min}}$
decreases. Therefore, the lower bound for the right side increases
and the lower bound for the left side decreases. We have

\begin{center}
  \begin{tabular}{| c | c |}
    \hline
    Range for eta $\eta$ & Lower bound for $\Delta\phi$ \\ \hline
    [1.38, 1.4] & $0.0585\pi+0.6091\pi>\frac{2\pi}{3}$ \\ \hline
    [1.4, 1.45] & $0.0748\pi+0.5991\pi>\frac{2\pi}{3}$ \\ \hline
    [1.45, 1.5] & $0.1123\pi+0.5848\pi>\frac{2\pi}{3}$ \\ \hline
    [1.5, 2] & $0.1453\pi+0.5296\pi>\frac{2\pi}{3}$ \\ \hline
    [2, 3] & $0.2792\pi+0.4246\pi>\frac{2\pi}{3}$ \\ \hline
    [3, 4] & $0.3631\pi+0.3392\pi>\frac{2\pi}{3}$ \\ \hline
    [4, 5] & $0.4035\pi+0.2722\pi>\frac{2\pi}{3}$ \\ \hline
  \end{tabular}
\end{center}

For all $\hat{R}>\frac{5}{2}$, first we compare $k_{\mathbf{min}}$
with $\frac{1}{2\hat{R}^2}$. Note that $V(k_{\mathbf{min}})=\eta$.
Let
\begin{equation}
    U(\hat{R})=\eta-V(\frac{1}{2\hat{R}^2})=\hat{R}^2-6\log\hat{R}+2\log\frac{1}{\sqrt{3}}-\frac{1}{4\hat{R}^4},
\end{equation}
we have
$\frac{d}{d\hat{R}}U=\frac{1}{\hat{R}^5}(2\hat{R}^6-6\hat{R}^4+1)$.
Note that $2\hat{R}^6-6\hat{R}^4+1$ is increasing for
$\hat{R}>\sqrt{2}$, $\frac{d}{d\hat{R}}U(2)>0$, $U$ is increasing
for $\hat{R}>2$. Together with $U(\frac{5}{2})>0$, we can deduce for
$\hat{R}>\frac{5}{2}$, $U>0$ and therefore
$k_{\mathbf{min}}<\frac{1}{2\hat{R}^2}$.

Use
$L(\hat{R})=\int_{k_{\mathbf{min}}}^1\frac{2dk}{\sqrt{\eta-V(k)}}>\frac{2(1-k_{\mathbf{min}})}{\sqrt{\eta-1}}$,
\begin{equation}
\Delta\phi\geq\frac{2\pi}{3}-2\sin^{-1}(\frac{\kappa}{\hat{R}})+\frac{2(\kappa-k_{\mathbf{min}})}{\sqrt{\hat{R}^2-\kappa^2}}\geq2\Bigg(\frac{\pi}{3}-\sin^{-1}(\frac{\kappa}{\hat{R}})+\frac{\kappa-\frac{1}{2\hat{R}^2}}{\sqrt{\hat{R}^2-\kappa^2}}\Bigg).
\end{equation}

Consider $\kappa$ as function of $\hat{R}$. Let
$F(\hat{R})=-\sin^{-1}(\frac{\kappa}{\hat{R}})+\frac{\kappa-\frac{1}{2\hat{R}^2}}{\sqrt{\hat{R}^2-\kappa^2}}$.
We have
\begin{equation}
\begin{split}
F'(\hat{R})=&\frac{1}{\hat{R}\sqrt{\hat{R}^2-\kappa^2}^3}\left(\kappa(1-\kappa^2)+\frac{1}{2}(3-\frac{1}{\hat{R}^2}-\frac{2\kappa^2}{\hat{R}^2})\right)\\
<&\frac{1}{\hat{R}\sqrt{\hat{R}^2-\kappa^2}^3}\left(2\kappa(\log\frac{2}{\sqrt{3}}-\log\hat{R})+\frac{3}{2}\right)<\frac{2}{\hat{R}\sqrt{\hat{R}^2-\kappa^2}^3}\log(\frac{2e^\frac{3}{4}}{\sqrt{3}\hat{R}}).
\end{split}
\end{equation}
For $\hat{R}>\frac{5}{2}>\frac{2e^\frac{3}{4}}{\sqrt{3}}$, we can
deduce that $F'(\hat{R})<0$. Since
$\underset{\hat{R}\to\infty}{\lim}F=0$, we get
$\Delta\phi>\frac{2\pi}{3}$. In each case, we have
$\Delta\theta=\Delta\phi-\Delta\psi>\pi$.

\emph{Case 2: $\frac{4}{3}\leq\eta(\hat{R})<1.38$.} From the proof
of the theorem above, we have $\Delta\phi>L(\hat{R})>0.6123\pi$.
Therefore, $\Delta\theta>0.9456\pi$.

\emph{Case 3: $\eta_*\leq\eta<\frac{4}{3}$.} In this case,
\begin{equation}
1+2\log\frac{2}{\sqrt{3}}\leq\eta(\hat{R})=\hat{R}^2-2\log\hat{R}+2\log\frac{2}{\sqrt{3}}\leq\frac{4}{3}.
\end{equation}
Note that $V(\frac{1}{\sqrt{3}})=\frac{1}{3}+\log3>\frac{4}{3}$. On
the other hand,
$V(\frac{2}{3})=\frac{4}{9}+2\log\frac{9}{4}<1+2\log\frac{2}{\sqrt{3}}$,
we can deduce
$\frac{1}{\sqrt{3}}<k_{\mathbf{min}}<\frac{2}{3}<\frac{5}{6}<\frac{\sqrt{3}}{2}\leq\frac{\sqrt{3}}{2}\hat{R}$.
Therefore, from lemma \ref{lower2nd},
\begin{equation}
\begin{split}
\Delta\phi&>\frac{1}{\sqrt{1+\sqrt{3}}}(\pi-2\sin^{-1}\frac{1-\frac{\sqrt{3}}{2}\hat{R}}{1-k_{\mathbf{min}}})>0.4456\pi(>\frac{\pi}{3}).
\end{split}
\end{equation}

Therefore, $\Delta\phi>0.4456\pi$,
$h_1(c)+2h_2(c)=\Delta\psi+0.4456\pi>0.7789\pi(>\frac{2\pi}{3})$.
\end{proof}

\begin{cor}
The Cisgeminate 3-ray star proposed in the appendix of \cite{MNPS}
does not exist.
\end{cor}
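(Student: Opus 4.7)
The plan is to argue by contradiction. Suppose the Cisgeminate 3-ray star from the appendix of \cite{MNPS} exists as a regular shrinker. Its topology is that of a $\Theta$-shaped subnetwork enclosing two closed regions in the ``cis'' pattern (both cells on the same side of a common middle arc), with three unbounded rays attached at three auxiliary triple junctions placed along the boundary arcs.

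The first step is to apply Lemma \ref{same_energy} at each of the three ray junctions. At each such junction the two AL-curves incident to the ray must share a common energy $c \ge c_*$, and the adjacent phase-plane trajectory segment is forced to be either $A(c) \to B(c)$ (contributing $h_3(c)$) or $D(c) \to C(c)$ (contributing $h_1(c)$); in particular, a ray-incident arc cannot include the ``$h_2$-type'' portions that cross $R=1$. The second step is to traverse the boundary of each of the two closed regions counterclockwise and tally the total turning of the tangent angle $\phi$. By Gauss--Bonnet for a simple closed curve with corners this sum must equal $2\pi$, with each $2\pi/3$-angle triple junction supplying exterior turning $\pi/3$ and each arc supplying $\Delta\phi = \Delta\theta + \Delta\psi$. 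Writing every $\Delta\theta$ contribution as a sum of $h_1$, $h_2$, $h_3$ values in the energies of the participating curves, and imposing the ray-junction restriction just obtained, I would extract rigid relations between these energies and the total angular content of each region.

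For the final step I would combine the lower bound $h_1(c)+2h_2(c) > 0.7789\pi > \tfrac{2\pi}{3}$ from Theorem \ref{h12h2}, the strict inequality $h_1(c) > h_3(c)$ from Lemma \ref{lem:left_right_ineq}, and the Abresch--Langer bounds $\pi < T(c) < \sqrt{2}\pi$ on the full period. Substituting these into the angle-sum equations obtained in the second step would force the total turning demanded by the cisgeminate topology to overshoot $2\pi$ on one region while undershooting it on the other, contradicting the assumed existence of the network. The main obstacle I anticipate is the orientation bookkeeping on the shared middle arc of the $\Theta$, which simultaneously bounds both closed regions with opposite orientations; making the two angle-sum equations jointly inconsistent uses crucially that the cis (rather than trans) arrangement forces both regions to reuse the \emph{same} phase-plane segment along the shared arc, and this is precisely where the specific picture in \cite{MNPS} feeds into the numerical contradiction.
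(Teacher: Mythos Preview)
Your proposal rests on a misidentification of the topology. The Cisgeminate 3-ray star is not a $\Theta$-shaped network with two closed regions and three rays; it is a configuration with threefold rotational symmetry in which each closed region is a 5-cell. This is why the corollary appears immediately after Theorem~\ref{h12h2}, well before the paper restricts attention to $\Theta$-shaped networks in Section~3.

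The paper's actual argument is two lines. By the threefold symmetry, the change of angle associated to each 5-cell is exactly $\tfrac{2\pi}{3}$. Tracing the relevant boundary in the phase plane shows that this change of angle equals $(h_1+4h_2)(c)$ for the common energy $c$. But Theorem~\ref{h12h2} gives $h_1(c)+2h_2(c)>\tfrac{2\pi}{3}$, and since $h_2(c)\ge 0$ one obtains $(h_1+4h_2)(c)>\tfrac{2\pi}{3}$, a contradiction.

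Even setting the topology aside, your plan never actually reaches a contradiction: the phrases ``extract rigid relations'' and ``would force the total turning \ldots\ to overshoot $2\pi$'' are programmatic, not arguments, and the orientation bookkeeping you flag as the main obstacle is never carried out. With the correct topology the symmetry immediately pins the target angle at $\tfrac{2\pi}{3}$ and the phase-plane bookkeeping collapses to the single quantity $h_1+4h_2$; no Gauss--Bonnet tally or two-region orientation tracking is needed.
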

\begin{proof}
By symmetry, the change of angle is $\frac{2\pi}{3}$ for each
5-cell. On the other hand, the change of angle should be $h_1+4h_2$
for the corresponding energy, which is impossible since
$h_1+2h_2>\frac{2\pi}{3}$.
\end{proof}

\begin{prop}\label{thm:half_upper_bound}
For any $c>1$, $\Delta\theta_{MN}(c)<\pi$.
\end{prop}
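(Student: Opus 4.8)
The plan is to pass from the polar angle $\theta$ to the tangent angle $\phi$, where the integrand takes the tractable potential form. First I would record that $M(c)$ and $N(c)$ both lie on $R=1$ and that the counterclockwise arc from $M$ to $N$ is exactly the portion of the trajectory with $R\le 1$; by the symmetry $\psi\mapsto\pi-\psi$ this arc is symmetric about its turning point $R=R^-(c)$. Along it the curvature $k=R\sin\psi$ runs from $1/c$ at $M$ (where $R=1$, so $\sin\psi=1/c$) down to $k_{\mathbf{min}}=R^-(c)$ at the turning point and back to $1/c$ at $N$. Using $\Delta\phi=\int \frac{dk}{\sqrt{\eta-V(k)}}$ with $V(k)=k^2-2\log k$ and $\eta=1+2\log c$, and splitting the arc at its turning point, I get $\Delta\phi_{MN}=2\int_{k_{\mathbf{min}}}^{1/c}\frac{dk}{\sqrt{\eta-V(k)}}$. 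Since $\Delta\theta=\Delta\phi-\Delta\psi$ and $\Delta\psi_{MN}=2\sin^{-1}(1/c)-\pi$, the assertion $\Delta\theta_{MN}<\pi$ is equivalent to the clean inequality
\[
\int_{k_{\mathbf{min}}}^{1/c}\frac{dk}{\sqrt{\eta-V(k)}}<\sin^{-1}\!\Big(\tfrac1c\Big).
\]

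The reason to prefer this formulation is that, although $\Delta\theta_{MN}\to\pi$ as $c\to\infty$, the displayed inequality is \emph{not} tight. A short computation gives $\eta-V(1/c)=1-1/c^2$ exactly, so $k=1/c$ is precisely the place where $\eta-V(k)=1-k^2$, and the ratio of the two sides stays near $1/\sqrt2$ for all $c>1$ (it tends to $1/\sqrt2$ as $c\to1^+$ and to $e^{-1/2}\int_0^1 e^{v^2/2}\,dv\approx 0.72$ as $c\to\infty$). Hence a bound on the integral that is merely good, not sharp, will suffice. To produce one I would upper bound $V$ on $[k_{\mathbf{min}},1/c]$ by its quadratic Taylor polynomial at $k_{\mathbf{min}}$: since $V'''(k)=-4/k^3<0$ the cubic remainder is negative, so $V(k)<V(k_{\mathbf{min}})+V'(k_{\mathbf{min}})(k-k_{\mathbf{min}})+\tfrac12 V''(k_{\mathbf{min}})(k-k_{\mathbf{min}})^2$ for $k>k_{\mathbf{min}}$. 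Writing $a=-V'(k_{\mathbf{min}})$ and $b=\tfrac12 V''(k_{\mathbf{min}})=1+k_{\mathbf{min}}^{-2}$ and using $\eta=V(k_{\mathbf{min}})$, this gives $\eta-V(k)>a\,t-b\,t^2$ with $t=k-k_{\mathbf{min}}$, whose reciprocal square root integrates in closed form to an arcsine and thus produces an explicit upper bound for the left side. This is the convexity idea of Lemma \ref{lower2nd} run in the opposite direction, and the quadratic term is genuinely needed: the cruder secant (linear) bound on $V$ already fails near $c=1$, where the integration lives at the bottom of the well and $V$ is essentially quadratic.

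It then remains to check the resulting elementary inequality for every $c>1$. The only geometric input is the location of the turning point, which I would control through $V(k_{\mathbf{min}})=\eta$: setting $u_0=c\,k_{\mathbf{min}}$ this reads $1+2\log u_0=u_0^2/c^2$, so $u_0$ decreases from $1$ to $e^{-1/2}$ as $c$ ranges over $(1,\infty)$, whence $\tfrac1{2c}<k_{\mathbf{min}}<\tfrac1c$ (and in particular $a-b\,t>0$ throughout, so the arcsine bound is real). Feeding these bounds into the arcsine expression reduces everything to a one-variable inequality between elementary functions of $c$. I expect this last comparison to be the main obstacle: the arcsine bound has margin everywhere, but verifying the inequality uniformly in $c$ is fiddly and may require splitting the range of $c$ (equivalently of $u_0$) and estimating $\sin^{-1}$ and the square roots separately, in the explicit numerical style already used in Theorem \ref{h12h2}. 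Everything preceding this final step is soft.
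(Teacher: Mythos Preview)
Your proposal is correct and follows essentially the same route as the paper: both reduce $\Delta\theta_{MN}<\pi$ to the equivalent inequality $\Delta\phi_{MN}<2\psi_0$ (your $\int_{k_{\mathbf{min}}}^{1/c}\frac{dk}{\sqrt{\eta-V}}<\sin^{-1}(1/c)$), then bound $V$ from above by an explicit comparison function whose integral is an arcsine, and finish with a case-by-case numerical check. The only difference is the choice of comparison: the paper uses the secant line through $(k_{\mathbf{min}},\eta)$ and $(k_0,V(k_0))$ for large $c$ and the parabola $2(k-1)^2+L$ (matched at $k_{\mathbf{min}}$) for moderate $c$, whereas you use the single second-order Taylor polynomial of $V$ at $k_{\mathbf{min}}$ throughout---both lead to the same kind of elementary endgame, and your anticipation that this endgame requires range-splitting is exactly what the paper does.
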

\begin{proof}
Let $\psi_0$ be the $\psi$ value at $N(c)$. We have
$c=\frac{1}{\sin(\psi_0)}$ and $\eta=1+2\log c=1-2\log \sin\psi_0$.
The curvature corresponds to $(1,\psi_0)$ is $k_0=\sin\psi_0$.
\begin{equation}
\Delta\theta=\Delta\phi-\Delta\psi=\Delta\phi+\pi-2\psi_0.
\end{equation}
We need to estimate $\Delta\phi$. Let $\hat{V}$ be the linear
function passing through
$(k_{\mathbf{min}},\eta)=(k_{\mathbf{min}},1-2\log\sin\psi_0)$ and
$(k_0,V(k_0))=(\sin\psi_0,\sin^2\psi_0-2\log\sin\psi_0)$. Since
$\hat{V}''=0$, $V''>0$, we have $\hat{V}>V$ for all
$k\in(k_{\mathbf{min}},k_0)$.
\begin{equation}
\begin{split}
\Delta\phi&=\int_{k_{\mathbf{min}}}^{k_0}\frac{2dk}{\sqrt{\eta-V(k)}}\leq\int_{k_{\mathbf{min}}}^{k_0}\frac{2dk}{\sqrt{\eta-\hat{V}(k)}}
=4\frac{k_0-k_{\mathbf{min}}}{\cos\psi_0}.
\end{split}
\end{equation}
Note that
\begin{equation}
V(\frac{\sin\psi_0}{\sqrt{e}})=\frac{\sin^2\psi_0}{e}-2\log(\frac{\sin\psi_0}{\sqrt{e}})=\frac{\sin^2\psi_0}{e}-2\log\sin\psi_0+1>\eta.
\end{equation}
We have $\frac{\sin\psi_0}{\sqrt{e}}<k_{\mathbf{min}}$. Therefore,
\begin{equation}
\begin{split}
\Delta\phi\leq4\frac{k_0-k_{\mathbf{min}}}{\cos\psi_0}<2\frac{2(1-\frac{1}{\sqrt{e}})}{\cos\psi_0}\sin\psi_0.
\end{split}
\end{equation}
For $\psi_0<0.21\pi<\cos^{-1}(2(1-\frac{1}{\sqrt{e}}))$, we have
$\Delta\phi\leq2\sin\psi_0\leq2\psi_0$.

For $0.21\pi\leq\psi_0<\frac{\pi}{2}$, we can improve the lower
bound for $k_{\mathbf{min}}$. Since
\begin{equation}
V(\frac{\sin\psi_0}{\sqrt{2}})=\frac{\sin^2\psi_0}{2}-2\log(\frac{\sin\psi_0}{\sqrt{2}})=\frac{\sin^2(\frac{\pi}{5})}{2}-2\log\sin\psi_0+\log2>\eta,
\end{equation}
$k_{\mathbf{min}}>\frac{\sin\psi_0}{\sqrt{2}}$ in this case. Let
$\hat{V}(k)=2(k-1)^2+L$ where $L$ is chosen such that
$\bar{V}(k_{\mathbf{min}})=\eta$. We have $\hat{V}>V$ in our
interval of integration since $\hat{V}''<V''$,
$\hat{V}(k_{\mathbf{min}})=V(k_{\mathbf{min}})$ and
$\hat{V}'(1)=V'(1)=0$.
\begin{equation}
\begin{split}
\Delta\phi&\leq\int_{k_{\mathbf{min}}}^{\sin\psi_0}\frac{2dk}{\sqrt{\eta-L-2(k-1)^2}}
=\sqrt{2}\cos^{-1}(\frac{1-\sin\psi_0}{1-k_{\mathbf{min}}})<\sqrt{2}\cos^{-1}(\frac{1-\sin\psi_0}{1-\frac{\sin\psi_0}{\sqrt{2}}}).
\end{split}
\end{equation}
Note that we use the substitution
$-\sqrt{\frac{\eta-L}{2}}\cos\alpha=k-1$.

Let $f(\psi_0)=\sin\psi_0+\cos\sqrt{2}\psi_0(1-\frac{\sin\psi_0
}{\sqrt{2}})$, we want to show that $f<1$ for $\frac{\pi}{5}\leq
k<\frac{\pi}{2}$. We have $f<1$ for
$\psi_0\geq\frac{\pi}{2\sqrt{2}}$.
\begin{center}
  \begin{tabular}{| c | c |}
    \hline
    Range for $\psi_0$ & Upper bound for $f(\psi_0)$ \\ \hline
    $[0.3\pi, \frac{\pi}{2\sqrt{2}})$ & $\sin\frac{\pi}{2\sqrt{2}}+\cos(\sqrt{2}\cdot0.3\pi)(1-\frac{\sin 0.3 \pi}{\sqrt{2}})<0.9969$ \\ \hline
    $[0.27\pi, 0.3\pi)$ & $\sin0.3\pi+\cos(\sqrt{2}\cdot0.27\pi)(1-\frac{\sin 0.27 \pi}{\sqrt{2}})<0.9794$ \\ \hline
    $[0.24\pi, 0.27\pi)$ & $\sin0.27\pi+\cos(\sqrt{2}\cdot0.24\pi)(1-\frac{\sin 0.24 \pi}{\sqrt{2}})<0.9996$ \\ \hline
    $[0.22\pi, 0.24\pi)$ & $\sin0.24\pi+\cos(\sqrt{2}\cdot0.22\pi)(1-\frac{\sin 0.22 \pi}{\sqrt{2}})<0.9917$ \\ \hline
    $[0.21\pi, 0.22\pi)$ & $\sin0.22\pi+\cos(\sqrt{2}\cdot0.21\pi)(1-\frac{\sin 0.21 \pi}{\sqrt{2}})<0.9748$ \\ \hline
  \end{tabular}
\end{center}

This is equivalent to $\Delta\phi<2\psi_0$.

\end{proof}

If we set $\psi_0=\frac{\pi}{3}$, $A(c_*)=N(c_*)=D(c_*)$ and
$B(c_*)=M(c_*)=D(c_*)$ in the proposition, we have the following
corollary.
\begin{cor}\label{cor:h1upper}
The upper bound of $h_1$ is given by
\begin{equation}
h_1(c_*)<\pi.
\end{equation}
\end{cor}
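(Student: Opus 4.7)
The plan is to derive the corollary as a direct specialization of Proposition~\ref{thm:half_upper_bound} to the threshold energy $c=c_*$, exploiting the fact that at this value several of the named points on the trajectory collapse onto each other.

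First I would record the pointwise coincidences at $c_*$. Since $c_*=\tfrac{2}{\sqrt{3}}$, one has $\tfrac{\sqrt{3}}{2}c_*=1$, and $K(1)=1$ forces $R^+(1)=R^-(1)=1$. Plugging this into the definitions of $A,B,C,D,M,N$ yields $A(c_*)=D(c_*)=N(c_*)=(1,\pi/3)$ and $B(c_*)=C(c_*)=M(c_*)=(1,2\pi/3)$. In particular, the endpoints $C,D$ used to define $h_1$ coincide with the extremal-$\psi$ points $M,N$ appearing in Proposition~\ref{thm:half_upper_bound}.

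Next I would match the two angular-change expressions. By definition,
\[
h_1(c_*)=\Delta\theta_{CD}(c_*)=\int_{\pi/3}^{2\pi/3}\frac{d\psi}{1-[R^-(c_*\sin\psi)]^2},
\]
which is the change of angle along the $R<1$ branch between the $\psi=2\pi/3$ and $\psi=\pi/3$ endpoints. By the coincidences above these endpoints are precisely $M(c_*)$ and $N(c_*)$, and the branch is the same one implicit in the proof of Proposition~\ref{thm:half_upper_bound} (where $\psi$ descends from $\pi-\psi_0$ to $\psi_0$, so that $\Delta\phi$ is integrated against the $k_{\mathbf{min}}$-side of the potential $V$, i.e.\ through $R=R^-(c)<1$). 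Setting $\psi_0=\pi/3$ therefore gives $h_1(c_*)=\Delta\theta_{MN}(c_*)$.

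Finally, since $c_*=\tfrac{2}{\sqrt{3}}>1$, Proposition~\ref{thm:half_upper_bound} applies at $c=c_*$ and yields $\Delta\theta_{MN}(c_*)<\pi$, whence $h_1(c_*)<\pi$. There is no real obstacle in the argument; the only point needing care is confirming that the branches in the two formulas truly agree, but this is automatic once the endpoint identifications are in place and the $R<1$ side is chosen consistently on both sides of the equality.
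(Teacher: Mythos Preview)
Your proof is correct and is essentially identical to the paper's own argument: the paper derives the corollary by the one-line observation that setting $\psi_0=\pi/3$ in Proposition~\ref{thm:half_upper_bound} gives $A(c_*)=N(c_*)=D(c_*)$ and $B(c_*)=M(c_*)=C(c_*)$, so that $h_1(c_*)=\Delta\theta_{MN}(c_*)<\pi$. Your added verification that both quantities sit on the $R<1$ branch is a helpful clarification but not a departure from the paper's reasoning.
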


\section{The possible topology of a regular shrinker with 2 closed regions}

Now, we turn our attention to the topology of a regular shrinker
with possibly more than 2 closed regions. Remove all the rays from
such a regular shrinker and consider it as a graph $G$ with $E$
edges and $V$ vertices.

\begin{lem} \label{lem:topology_network}
For any regular shrinker with at least one closed region, let $F_i$
be the closed regions enclosed by the network. Then
$\cup_{i}\mathbf{cl}(F_i)$ is star-shaped with respect to the origin
$O$.
\end{lem}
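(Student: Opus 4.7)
My goal is to show that $A := \cup_i \mathbf{cl}(F_i)$ is star-shaped with respect to $O$, which requires both $O \in A$ and that every ray $L$ from $O$ meets $\partial A$ in a connected set. The plan first observes that $A$ is simply connected: the complement consists of the open unbounded regions of the network together with the AL-curve arcs between pairs of unbounded regions, and these form a connected set, so $\mathbb{R}^2 \setminus A$ is connected. Therefore $\partial A$ is a Jordan curve with corners at triple junctions lying on $\partial A$; such corners come in two types, with interior angle $4\pi/3$ (where two closed regions meet one unbounded region) or $2\pi/3$ (where one closed region meets two unbounded regions).

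The crucial observation is the monotonicity of $\theta$ along AL-curves. Orient each AL-arc of $\partial A$ so that $\theta$ is non-decreasing along it; with this orientation one has $\psi \in [0,\pi]$ and $d\theta/ds = \sin\psi/R \geq 0$. At a triple junction on $\partial A$, the $2\pi/3$ angle between the two boundary arcs together with the continuity of $\theta$ (as the polar angle of position) should force the independently chosen orientations to concatenate consistently, yielding a globally $\theta$-non-decreasing traversal of $\partial A$. Since $\partial A$ is a closed loop, the total change of $\theta$ along this traversal is $2\pi n$ for some integer $n$, and the fact that $d\theta/ds$ is strictly positive on non-radial AL-arcs forces $n = 1$. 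Consequently $\theta$ attains each value in $[0, 2\pi)$ exactly once along $\partial A$, so every ray from $O$ meets $\partial A$ at a single point, yielding star-shapedness (once $O \in A$ is established).

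To show $O \in A$, I argue by contradiction: suppose $O$ lies in an open unbounded region. Then for any closed region $F_j$, the boundary $\partial F_j$ has winding number zero around $O$, so $\oint_{\partial F_j} d\theta = 0$. But orienting each AL-arc of $\partial F_j$ with $\theta$ non-decreasing, the lower bound $h_1 + 2h_2 > 2\pi/3$ from Theorem~\ref{h12h2}, combined with the positivity of $d\theta/ds$ on non-radial arcs, forces the total $\oint_{\partial F_j} d\theta$ to be strictly positive, a contradiction. Hence $O \in \mathbf{cl}(F_j)$ for some $j$, and therefore $O \in A$.

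The main technical obstacle is the careful case analysis at the triple junctions on $\partial A$: one must handle both junction types (interior angle $4\pi/3$ and $2\pi/3$), determine which two of the three meeting curves lie on $\partial A$, and verify that the $\theta$-non-decreasing orientations of these two arcs can be concatenated into a consistent global traversal of $\partial A$. Degenerate configurations also need separate treatment — in particular, the case where an AL-arc of $\partial A$ is a straight line through $O$, for which $\psi \equiv 0$ or $\pi$ and $d\theta/ds \equiv 0$ on that arc.
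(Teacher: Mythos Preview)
Your argument for $O\in A$ is not correct. If $O\notin F_j$ then $\oint_{\partial F_j}d\theta=0$ with the \emph{boundary} orientation, but once you reorient each arc individually so that $d\theta\ge0$, the sum you compute is $\oint_{\partial F_j}|d\theta|$, not the winding integral; of course this is positive, and no contradiction follows. The appeal to Theorem~\ref{h12h2} is also misplaced: the bound $h_1+2h_2>\tfrac{2\pi}{3}$ concerns a specific arc of the phase trajectory for curves with $c\ge c_*$, i.e.\ curves meeting a ray at a triple junction; arbitrary edges of $\partial F_j$ need not satisfy $c\ge c_*$, and in any case that inequality says nothing about the signed winding of $\partial F_j$.

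The monotonicity step has a more basic circularity. You orient each arc of $\partial A$ so that $\theta$ increases, and then assert that these orientations concatenate into a single traversal of $\partial A$. But whether they concatenate consistently at a junction $P$ depends on \emph{which} two of the three arcs at $P$ lie on $\partial A$, and that is global information about which regions are bounded. In fact, $d\theta/ds>0$ along $\partial A$ (with the counterclockwise boundary orientation) says exactly that the inward normal to $A$ points toward $O$ at every boundary point, which is precisely star-shapedness with respect to $O$; so the monotonicity you want to verify is equivalent to the conclusion. You also implicitly use that $A$ is connected (hence $\partial A$ is a single Jordan curve), but this is part of what must be proved.

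The paper's argument avoids all of this by working with the farthest-point function $\rho(t)=\max\{x>0:xt\in G\}$ on $\mathbb{S}^1$, where $G$ is the network with rays removed. One shows directly that the domain of $\rho$ is both open and closed in $\mathbb{S}^1$: at a vertex the $\tfrac{2\pi}{3}$ angle condition forces at least one edge to head clockwise and one counterclockwise, and at an interior edge point the AL-curve is non-radial; in either case nearby directions still meet $G$. Compactness of $G$ gives closedness and then continuity of $\rho$. The curve $\Gamma(t)=\rho(t)t$ bounds a region $F$ which is star-shaped by construction and contains $O$ automatically, and one checks $\bigcup_i\mathbf{cl}(F_i)=\mathbf{cl}(F)$. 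This route never needs to know in advance that $A$ is connected or that $O\in A$.
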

\begin{proof}
From the graph $G$ defined above, define
$\rho:\mathcal{I}\subset\mathbb{S}^1\to \mathbb{R}$ by
\begin{equation}
    \rho(t)=\underset{\{x\in\mathbb{R}^+|xt\in G\}}{\max} x,
\end{equation}where $\mathcal{I}$ is the maximal subset of $\mathbb{S}^2$ such that $\rho$ can be defined. Since $G$ is a compact set, if the set $\{x\in\mathbb{R}^+|xt\in G\}$ is not empty, we can get the maximum value.

For any $t\in\mathcal{I}$, $\rho(t)t\in G$. If $\rho(t)t$ is a
vertex of $G$, since the edges intersection at $\rho(t)t$ and the
angle between the curves are $\frac{2\pi}{3}$, there should be at
least 1 curve going  clockwise and 1 curve going counterclockwise
from $\rho(t)t$. Therefore, there should be a neighborhood of $t$
which is contained in $\mathcal{I}$.  If $\rho(t)t$ lies on an edge
of $G$, this edge cannot be a line segment since there exist one
endpoint of the line segment corresponds to the same
$t\in\mathbf{S}^1$ with larger distance from the origin. Therefore,
it muse lies on a segment of a nondegenerate AL-curve and there
should be a neighborhood of $t$ which is contained in $\mathcal{I}$.
$\mathcal{I}$ is open in $\mathbb{S}^1$.

For any sequence $t_i\in\mathcal{I}$, $t_i\to t$, $\rho(t_i)t_i$ is
a sequence in $G$. Since $G$ is compact, there must be a limit point
$x$ of $\rho(t_i)t_i$ in $G$. Since there are only finitely many
nondegenerate AL-curves in $G$ and each $\rho(t_i)t_i$ lies on
either a segment of a nondegenerate AL-curve or an endpoint of a
segment of a nondegenerate AL-curve. $\rho(t_i)$ is bounded away
from 0. Therefore $|x|>0$, $x=|x|t$ and $t\in\mathcal{I}$.
$\mathcal{I}$ is closed in $\mathbb{S}^1$. Since $\mathcal{I}$ is
nonempty, we have $\mathcal{I}=\mathbb{S}^1$.

Note that $\rho$ is upper semi-continuous, $\underset{t\to
t_0}{\lim\sup} \rho(t)\leq \rho(t_0)$. For any $t_0\in \mathbb S^1$,
$\rho(t_0)t_0$ is a vertex or belongs to a non-degenerate AL-curve.
Again, there exists a neighborhood of $\rho(t_0)t_0$ in $G$. We can
find a sequence $P_i\in G$ in the neighborhood such that it
converges to $\rho(t_0)t_0$ and $t_i=\frac{P_i}{|P_i|}\neq t_0$,
$t_i$ converges to $t_0$. We obtain
\[
\rho(t_0)=\underset{P_i\to \rho(t_0)t_0}\lim |P_i|
\leq \underset{t_i\to t_0}\liminf \rho(t)
\leq \underset{t_i\to t_0}\limsup \rho(t)\leq \rho (t_0).
\]
Therefore, $\rho(t_0)=\underset{t\to t_0}{\lim} \rho(t)$ and  $\rho$
is continuous on $\mathbb S^1$. Let $\Gamma(t)=\rho(t)t$ and $F$ be
the finite region enclosed by the curve $\Gamma(t)$. Note that the
origin $O$ belongs to $F$ because $\mathcal{I}=\mathbb S^1$. Since
$\Gamma\subset G_0$ and $G_0\subset\mathbf{cl}(F)$, we obtain
$\underset{i}\cup\mathbf{cl}(F_i)=\mathbf{cl}(F)$ is star-shaped
with respect to the origin.
\end{proof}

Now, we turn our attention to the topology of regular shrinker with
2 closed regions.
\begin{thm}\label{topology}
The topology of a regular shrinker with 2 closed regions must be a
$\Theta$-shaped network with possibly multiple rays attached to the
outer curves.
\end{thm}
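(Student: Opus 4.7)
The plan is to combine Lemma~\ref{lem:topology_network} with Euler's formula applied to the compact graph $G$ obtained by deleting all rays from the regular shrinker. Let $V_3$ (resp.\ $V_2$) denote the number of vertices of $G$ of degree $3$ (resp.\ degree $2$, i.e., triple junctions from which a single ray has been removed), and let $c$ denote the number of connected components of $G$. Every triple junction carries at most one ray: two rays at a junction $P\neq O$ would both have to lie on the line through $P$ and $O$, forcing them to be opposite half-lines and contradicting the $\tfrac{2\pi}{3}$ Herring condition. So every vertex of $G$ has degree $2$ or $3$. Euler's formula $V - E + F = 1 + c$ with $F = 3$ (two closed regions plus the unbounded face), combined with the handshake identity $3V_3 + 2V_2 = 2E$, yields $V_3 = 4 - 2c$, hence $(V_3, c) \in \{(2, 1), (0, 2)\}$.

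I would exclude $(V_3, c) = (0, 2)$ as follows. Each component of $G$ then has all vertices of degree $2$, so is either a polygon of AL-arcs with a ray at every vertex, or (if it has no triple junctions) a smooth embedded closed AL-curve, which by Abresch--Langer must be the unit circle. For two bounded regions to arise, the two components must be nested. In the two-polygon subcase, a ray at any vertex $P$ of the inner polygon lies on the line through $P$ and $O$; but the outer polygon encloses $O$ by Lemma~\ref{lem:topology_network}, so the ray cannot reach infinity without crossing the outer polygon, contradicting embeddedness. In the remaining subcase (inner component a smooth unit circle, outer component a polygon with rays), the outer closed cell $F_2$ is an annulus, and the partition $\overline{F_1}\cup\overline{F_2}$ of the star-shaped $\Omega$ fails to give two simply-connected cells; such configurations appear only among the degenerate solutions treated in Section~7. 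Hence $(V_3, c) = (2, 1)$.

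In this remaining case, contracting each maximal path of degree-$2$ vertices of $G$ to a single edge yields a multigraph on two trivalent vertices with exactly three edges, which is either the $\Theta$-graph (three parallel edges) or the dumbbell (one connecting edge plus a self-loop at each vertex). In the dumbbell case, the closures of the two bounded regions are disjoint, so any segment from the origin reaching both must pass through the unbounded complement of $\Omega$, contradicting Lemma~\ref{lem:topology_network}. Hence $G$ is a subdivision of $\Theta$. Finally, if a ray were attached at a degree-$2$ vertex sitting on the middle edge of $\Theta$, it would extend into one of the closed cells $F_i$; but each $F_i$ is an open component of the complement of the full network and therefore contains no curves, a contradiction. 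So every ray attaches to one of the two outer arcs of $\Theta$, i.e., the outer curves. The most delicate step is the clean exclusion of the nested $(0,2)$ case, where the radial structure of rays dictated by the self-shrinker equation must be combined with the Abresch--Langer classification of closed embedded AL-curves.
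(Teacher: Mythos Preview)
Your Euler-formula approach is different from the paper's more direct argument (which shows the two regions must share exactly one edge and then rules out the possibility that this shared edge is a self-loop), and in principle it is a clean route. However, there is a genuine gap in your exclusion of the dumbbell.

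When you contract the degree-$2$ chains and obtain the dumbbell multigraph, you must still deal with its planar embeddings. You only handle the ``side-by-side'' embedding, where the two self-loops sit next to each other and the closures $\overline{F_1},\overline{F_2}$ are disjoint; Lemma~\ref{lem:topology_network} then applies. But the dumbbell also has a \emph{nested} planar embedding: the loop at $p$ lies inside the loop at $q$, with the connecting edge $f$ running between them. Here $F_1$ is the disk inside the $p$-loop and $F_2$ is the region inside the $q$-loop with the $p$-loop and the slit $f$ removed; the closures $\overline{F_1}$ and $\overline{F_2}$ share the entire $p$-loop, so your disjointness argument fails. This nested-dumbbell embedding is exactly the ``1-cell surrounded by a 4- or 5-cell'' that the paper spends the bulk of its proof eliminating, and that elimination genuinely requires the energy/period analysis of AL-curves (comparing $c_1$ with $c_2$, using $T(c)<\sqrt{2}\pi$, and forcing the inner and outer curves to intersect). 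None of this is graph-theoretic, and you cannot avoid it.

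There is a second, smaller gap in your $(V_3,c)=(0,2)$ case. First, ``for two bounded regions to arise, the two components must be nested'' is false as stated---two side-by-side closed curves also bound two regions---so you must invoke Lemma~\ref{lem:topology_network} explicitly to force nesting. More seriously, in the subcase where the inner component is the smooth unit circle and the outer is a polygon with rays, your dismissal (``such configurations appear only among the degenerate solutions treated in Section~7'') is simply wrong: Section~7 concerns zero-length edges, not annular cells or disconnected networks. To rule this out you would need to argue, for instance, that each AL-arc of the outer polygon (going from $B$ to the next $A$ or $D$ on its trajectory) is forced by the phase-plane dynamics to pass through $R<1$, hence to cross the unit circle.
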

\begin{proof}
Use lemma \ref{lem:topology_network}, the 2 closed regions share at
least an edge. If they share more than 1 edge, we obtain either
there are more than 2 closed regions or some multipoints are not
triple junctions. It is impossible.

We need to exclude the case that one of the regions is a 1-cell
surrounded by another region with 4-cell or 5-cell. Let $\gamma_1$
be the boundary of the 1-cell and $\gamma_2$ be the piecewise smooth
curve which is the boundary of
$\mathbf{cl}(F_1)\cup\mathbf{cl}(F_2)$, where $F_i$ are the closed
regions of the network. There can be at most one ray attached to
$\gamma_2$. Using lemma \ref{same_energy}, the energies of all
smooth AL-curves of $\gamma_2$ are the same. Let $c_i$ be the energy
of $\gamma_i$. Since $T(c)<\sqrt{2}\pi$ and the change of angle on
$\gamma_1$ is $2\pi$, the curve $\gamma_1$ consists more than a
complete period. The $R$ value of $\gamma_1$ must achieve the
maximum $R^+(c_1)$ and the minimum $R^-(c_1)$. Since $\gamma_1$ is
included in the region enclosed by $\gamma_2$, there exists a value
$R$ on $\gamma_2$ with $R>R^+(c_1)$. Therefore, we have $c_2>c_1$.

If there is no ray attached to $\gamma_2$, since the change of angle
is $2\pi>\sqrt{2}\pi>T(c_2)$, it must achieve the minimum value
$R^-(c_2)$. Since $c_2>c_1$, $\gamma_1$ and $\gamma_2$ intersect. We
obtain a contradiction. If there is a ray attached to $\gamma_2$,
suppose $\gamma_2$ achieve the minimum, we can argue as above.
Suppose $\gamma_2$ does not achieve the minimum, the two curves must
correspond to $BC$ arc and $DA$ arc on the trajectory. The change of
angle is less than $T(c_2)<\sqrt{2}\pi<2\pi$ and we obtain a
contradiction.
\end{proof}

From the theorem, the topology of the network is a $\Theta$ with
rays attached to either side. From lemma \ref{same_energy}, any 2
AL-curves which share a triple junction with a ray have the same
energy. Therefore, for a regular shrinker with 2 closed regions,
there are at most 3 piecewise smooth curves with different energy.
They correspond to the 3 arcs of the original $\Theta$ network.
Without loss of generality, we can rotate the network so that the
line connecting 2 triple junctions of the original $\Theta$ is
parallel to the $x$-axis and the origin is not contained in the
upper closed region. We call the triple junction on the right as the
starting point and the other triple junction of the original
$\Theta$ network as the ending point. We call the inner curve of the
$\Theta$ network $\gamma_{\mathbf{in}}$. Aside from
$\gamma_{\mathbf{in}}$, there are 2 piecewise smooth curves
consisting of AL-curves which goes from the starting point to the
ending point. We call them $\gamma_{\mathbf{up}}$,
$\gamma_{\mathbf{down}}$ depending on whether they go in the
counterclockwise direction or the clockwise direction from the
starting point to the ending point.

Let $R_{\mathbf{start}}$, $R_{\mathbf{end}}$ be the $R$ value for
the starting point and the ending point respectively.
\begin{prop}\label{prop:sym_start_end}
Let $\psi_{\mathbf{start,up}}$, $\psi_{\mathbf{end,up}}$,
$\psi_{\mathbf{start,in}}$, $\psi_{\mathbf{end,in}}$, be the
corresponding $\psi$ at the starting point or the ending point of
$\gamma_{\mathbf{up}}$, $\gamma_{\mathbf{in}}$ respectively. Then
$\psi_{\mathbf{start,up}}+\psi_{\mathbf{end,up}}=\pi$,
$\psi_{\mathbf{start,in}}+\psi_{\mathbf{end,in}}=\pi$, and
$K(R_{\mathbf{start}})=K(R_{\mathbf{end}})$.
\end{prop}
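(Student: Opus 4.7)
The plan is to exploit the Herring angle condition at the two triple junctions together with the conservation law $K(R)=c\sin\psi$ that each AL-arc satisfies at its endpoints.

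First I would translate the $2\pi/3$-angle condition into relations between the $\psi$ values at each junction. Since $\gamma_{\mathbf{up}}$ and $\gamma_{\mathbf{in}}$ are both parametrized from $P_{\mathbf{start}}$ toward $P_{\mathbf{end}}$, their parametrization tangent coincides with the outgoing tangent at $P_{\mathbf{start}}$ but is the reverse of the outgoing tangent at $P_{\mathbf{end}}$. Moreover the topology of the $\Theta$-network pins down the cyclic order of the three outgoing tangents at each junction: at $P_{\mathbf{start}}$ the upper closed region is the sector obtained by rotating the outgoing tangent of $\gamma_{\mathbf{up}}$ counterclockwise by $2\pi/3$ into that of $\gamma_{\mathbf{in}}$, whereas at $P_{\mathbf{end}}$ the cyclic order of the three outgoing tangents is reversed, so the corresponding rotation from $\gamma_{\mathbf{up}}$ to $\gamma_{\mathbf{in}}$ is clockwise. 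Writing $\psi=\phi-\theta$ for each curve then yields
\begin{equation*}
  \psi_{\mathbf{start,in}} = \psi_{\mathbf{start,up}} + \tfrac{2\pi}{3}, \qquad \psi_{\mathbf{end,in}} = \psi_{\mathbf{end,up}} - \tfrac{2\pi}{3}.
\end{equation*}
Requiring all four values to lie in $(0,\pi)$ already confines $\psi_{\mathbf{start,up}}\in(0,\pi/3)$ and $\psi_{\mathbf{end,up}}\in(2\pi/3,\pi)$.

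Next I would apply the conservation law to $\gamma_{\mathbf{up}}$ and $\gamma_{\mathbf{in}}$. Each of them is piecewise smooth, but by Lemma \ref{same_energy} they carry single energies $c_{u}$ and $c_{i}$ even across ray junctions, and at any such junction both $R$ and $\sin\psi$ are preserved because the phase point switches between $A$ and $B$ or between $D$ and $C$. Hence $K(R_{\mathbf{start}})=c_{u}\sin\psi_{\mathbf{start,up}}=c_{i}\sin\psi_{\mathbf{start,in}}$ and analogously at $P_{\mathbf{end}}$. Eliminating the energies gives
\begin{equation*}
  \sin\psi_{\mathbf{start,up}}\sin\bigl(\psi_{\mathbf{end,up}}-\tfrac{2\pi}{3}\bigr) = \sin\psi_{\mathbf{end,up}}\sin\bigl(\psi_{\mathbf{start,up}}+\tfrac{2\pi}{3}\bigr).
\end{equation*}
Expanding both sides via product-to-sum, the two $\cos(\psi_{\mathbf{start,up}}-\psi_{\mathbf{end,up}}\pm\tfrac{2\pi}{3})$ terms cancel and what remains is $\sqrt{3}\sin(\psi_{\mathbf{start,up}}+\psi_{\mathbf{end,up}})=0$. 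Since the sum lies in $(2\pi/3,4\pi/3)$, the only possibility is $\psi_{\mathbf{start,up}}+\psi_{\mathbf{end,up}}=\pi$. Adding the two angle relations then yields $\psi_{\mathbf{start,in}}+\psi_{\mathbf{end,in}}=\pi$, and $\sin\psi_{\mathbf{start,up}}=\sin\psi_{\mathbf{end,up}}$ gives $K(R_{\mathbf{start}})=K(R_{\mathbf{end}})$.

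The main obstacle is the bookkeeping in the first step: one must carefully distinguish the outgoing tangent (which enters the Herring condition) from the parametrization tangent (which enters the definition of $\psi$), and verify that the cyclic order of the three outgoing tangents at $P_{\mathbf{end}}$ is opposite to that at $P_{\mathbf{start}}$---this is precisely what forces the two angle relations to carry opposite signs of $2\pi/3$, and without it the trigonometric collapse fails. Once this sign has been correctly pinned down, the rest is a routine manipulation.
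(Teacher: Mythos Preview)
Your proposal is correct and follows essentially the same approach as the paper: derive the angle relations $\psi_{\mathbf{start,in}}=\psi_{\mathbf{start,up}}+\tfrac{2\pi}{3}$ and $\psi_{\mathbf{end,in}}=\psi_{\mathbf{end,up}}-\tfrac{2\pi}{3}$ from the Herring condition, combine them with the conservation law $K(R)=c\sin\psi$ at both junctions, and reduce to $\sin(\psi_{\mathbf{start,up}}+\psi_{\mathbf{end,up}})=0$. Your write-up is in fact more careful than the paper's in justifying the opposite signs of $\tfrac{2\pi}{3}$ via the reversed cyclic order at the two junctions and in invoking the range constraint to select the root $\pi$.
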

\begin{proof}
For a regular triple junction,
$\psi_{\mathbf{start},\mathbf{up}}+\frac{2\pi}{3}=\psi_{\mathbf{start},\mathbf{in}}$.
Similarly,
$\psi_{\mathbf{end},\mathbf{up}}-\frac{2\pi}{3}=\psi_{\mathbf{end},\mathbf{in}}$.
Compute the energy of $\gamma_{\mathbf{up}}$ and
$\gamma_{\mathbf{in}}$ gives
\begin{equation}
\begin{split}
c_{\mathbf{up}}&=\frac{K(R_{\mathbf{start}})}{\sin(\psi_{\mathbf{start},\mathbf{up}})}=\frac{K(R_{\mathbf{end}})}{\sin(\psi_{\mathbf{end},\mathbf{up}})},\\
c_{\mathbf{in}}&=\frac{K(R_{\mathbf{start}})}{\sin(\psi_{\mathbf{start},\mathbf{in}})}=\frac{K(R_{\mathbf{end}})}{\sin(\psi_{\mathbf{end},\mathbf{in}})}.\\
\end{split}
\end{equation}
We obtain
\begin{equation}
\frac{\sin(\psi_{\mathbf{start},\mathbf{up}})}{\sin(\psi_{\mathbf{start},\mathbf{up}}+\frac{2\pi}{3})}=\frac{\sin(\psi_{\mathbf{end},\mathbf{up}})}{\sin(\psi_{\mathbf{end},\mathbf{up}}-\frac{2\pi}{3})}.
\end{equation}
We omit the subscript "up" in the next equation. The equation is
equivalent to
\begin{equation}
\sin(\psi_{\mathbf{start}})[-\frac{1}{2}\sin(\psi_{\mathbf{end}})-\frac{\sqrt{3}}{2}\cos(\psi_{\mathbf{end}})]=\sin(\psi_{\mathbf{end}})[-\frac{1}{2}\sin(\psi_{\mathbf{start}})+\frac{\sqrt{3}}{2}\cos(\psi_{\mathbf{start}})]
\end{equation}
After combining some terms, we have
\begin{equation}
0=\frac{\sqrt{3}}{2}[\sin(\psi_{\mathbf{start}})\cos(\psi_{\mathbf{end}})+\sin(\psi_{\mathbf{end}})\cos(\psi_{\mathbf{start}})=\frac{\sqrt{3}}{2}\sin(\psi_{\mathbf{start}}+\psi_{\mathbf{end}}).
\end{equation}
Therefore, $\psi_{\mathbf{start}}+\psi_{\mathbf{end}}=\pi$.
\end{proof}

If we move along $\gamma_{\mathbf{down}}$ from the starting point to
the ending point, we are moving clockwisely. In order to use the
setting for counterclockwisely oriented AL-curve in section 2. We
use clockwise direction as the positive direction for the $\theta$,
$\phi$, $\psi$ value related to $\gamma_{\mathbf{down}}$. In this
setting, we have
$\psi_{\mathbf{start},\mathbf{down}}=\frac{2\pi}{3}-\psi_{\mathbf{start},\mathbf{up}}$.

Define $\theta_{\mathbf{up}}$, $\theta_{\mathbf{in}}$,
$\theta_{\mathbf{down}}$ be the total change of angle for the curves
respectively. Note that $\theta_{\mathbf{down}}$ is measured
clockwisely. We have
\begin{equation}
\theta_{\mathbf{up}}=\theta_{\mathbf{in}}=2\pi-\theta_{\mathbf{down}}.
\end{equation}
From the symmetry, the $\psi$ value at the starting point gives
suffice information for the $\psi$ value at the ending point. From
now on, use $\psi_{\mathbf{up}}$, $\psi_{\mathbf{in}}$,
$\psi_{\mathbf{down}}$ to describe the $\psi$ value for each curve
at the starting point for simplicity.
$\psi_{\mathbf{in}}=\psi_{\mathbf{up}}+\frac{2\pi}{3}$,
$\psi_{\mathbf{down}}=\frac{2\pi}{3}-\psi_{\mathbf{up}}$.

\section{The cell which does not contain the origin}
Since there are 2 closed regions, at least one of them does not
contain the origin in the interior. We can follow the argument in
\cite{BHM2} to show it must be a 4-cell. The following theorem
concerning 2-cell is established in \cite{BHM2}.
\begin{thm}[\cite{BHM2}]
In a self-similarly shrinking network moving by curvature, there are
no 2-cells without the origin inside.
\end{thm}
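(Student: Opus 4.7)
The plan is to argue by contradiction, combining the symmetry derived for a $\Theta$-network in Proposition \ref{prop:sym_start_end} with the winding-number constraint imposed by the origin lying outside the 2-cell. Suppose $F$ is a 2-cell not containing the origin, with boundary formed by two AL-curves $\gamma_1, \gamma_2$ of energies $c_1, c_2$ joined at triple junctions $P_1, P_2$ at interior angle $\frac{2\pi}{3}$. Set $u := \psi_1(P_1)$; the Herring condition forces $\psi_2(P_1) = u + \frac{2\pi}{3}$, and in particular $u \in (0,\frac{\pi}{3})$.

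First, the third edge at each junction lies outside $F$, so the algebraic computation in the proof of Proposition \ref{prop:sym_start_end}---which used only the Herring condition and the conservation law $K(R) = c\sin\psi$---applies verbatim to the pair $(\gamma_1, \gamma_2)$ and yields
\begin{equation*}
\psi_i(P_2) = \pi - \psi_i(P_1)\quad (i=1,2),\qquad K(R(P_1)) = K(R(P_2)).
\end{equation*}
Moreover $\partial F$ is a simple closed curve disjoint from the origin, so its winding number about the origin vanishes; parametrising both arcs from $P_1$ to $P_2$, this forces $\Delta\theta(\gamma_1) = \Delta\theta(\gamma_2)$.

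The next step is to show that these three conditions---matching $\psi$ endpoints, matching $K(R)$ endpoints, and matching $\Delta\theta$---are incompatible. One splits into cases according to whether $R(P_1) = R(P_2)$ (both endpoints on the same side of $R=1$) or $\{R(P_1), R(P_2)\} = \{R^-(K), R^+(K)\}$ (opposite sides), and further according to the position of $u$ and $u + \frac{2\pi}{3}$ relative to $\frac{\pi}{3}$ and $\frac{2\pi}{3}$. In each subcase, $\Delta\theta(\gamma_i)$ is expressed via the phase-plane integrals of Section~2 over appropriate pieces of the arcs $AD$, $DC$, $CB$, $BA$; one then uses the monotonicity of $\Delta\theta$ in $c$ noted after \eqref{eq:int_R_for_theta}, together with Lemma \ref{lem:left_right_ineq} and the half-period bound $\Delta\theta_{MN}(c) < \pi$ from Proposition \ref{thm:half_upper_bound}, to conclude $\Delta\theta(\gamma_1) \neq \Delta\theta(\gamma_2)$.

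The main obstacle is the case analysis itself. The ratio $c_2/c_1 = \sin u / \sin(u + \frac{2\pi}{3})$ ranges over $(0, \infty)$ as $u$ varies in $(0, \frac{\pi}{3})$, so the two energies can differ arbitrarily and the arcs can cover very different portions of their respective phase-plane ovals (even complete periods). The challenge is to pair subarcs of $\gamma_1$ and $\gamma_2$ so that each comparison uses an estimate pointing in the same direction. The borderline case $u = \frac{\pi}{6}$ forces $c_1 = c_2$ and renders $\gamma_2$ the reflection of $\gamma_1$ across the perpendicular bisector of $\overline{P_1 P_2}$, which passes through the origin; this case must be ruled out separately by showing that the two reflected arcs cannot bound a region disjoint from the origin.
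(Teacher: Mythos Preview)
The paper does not contain a proof of this theorem: it is quoted verbatim from \cite{BHM2} and used as a black box, so there is no ``paper's own proof'' to compare against. What follows is therefore an assessment of your proposal on its own merits.

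Your setup is sound. The algebraic step from Proposition~\ref{prop:sym_start_end} does transfer to a 2-cell, because that computation only uses that two AL-curves share both endpoints and meet at angle $\tfrac{2\pi}{3}$; it does not use the third curve $\gamma_{\mathbf{down}}$. Likewise the winding-number identity $\Delta\theta(\gamma_1)=\Delta\theta(\gamma_2)$ is correct for a simple closed boundary missing the origin.

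The genuine gap is exactly where you locate it: the case analysis is not carried out, and it is the entire content of the theorem. You have reduced the problem to showing that two arcs on distinct phase-plane ovals, with prescribed symmetric $\psi$-endpoints and a common $K(R)$-value at the endpoints, cannot have equal $\Delta\theta$; but you have not exhibited a single inequality that closes any of the subcases. The tools you list (monotonicity in $c$, Lemma~\ref{lem:left_right_ineq}, Proposition~\ref{thm:half_upper_bound}) are the right ingredients, yet each points in only one direction, and you have not shown how to chain them so that every subcase yields a strict inequality of the same sign. In particular, for $u$ near $0$ or near $\tfrac{\pi}{3}$ one of the energies blows up and its arc can wrap multiple periods, so the half-period bound $\Delta\theta_{MN}<\pi$ is not obviously enough. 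The symmetric case $u=\tfrac{\pi}{6}$, $c_1=c_2$ that you single out is not merely a borderline nuisance: there the two arcs lie on the \emph{same} oval and your monotonicity-in-$c$ lever vanishes entirely, so a separate geometric argument is mandatory and you have only asserted, not supplied, one.

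In short, you have a correct reduction but not a proof; the proposal as written would need the full case analysis executed before it could be judged complete.
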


If the closed region which does not contain the origin is a 3-cell,
a 4-cell, or a 5-cell, we have the following lemma.
\begin{lem}\label{lem:no_period_up}
We have the following result concerning $\gamma_{\mathbf{up}}$ and
$\gamma_{\mathbf{in}}$.
\begin{enumerate}
\item If $\gamma_{\mathbf{up}}$ passes through the point corresponding to $(R^-(c_{\mathbf{up}}),\frac{\pi}{2})$, we have $c_{\mathbf{in}}>c_{\mathbf{up}}$.
\item It is impossible for $\gamma_{\mathbf{up}}$ has a complete period on its trajectory.
\end{enumerate}
\end{lem}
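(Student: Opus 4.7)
The plan is to prove part (1) by a direct geometric comparison that uses star-shapedness, and then to leverage part (1) in a bootstrapping argument for part (2).

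For part (1), let $O$ denote the origin and $P$ the point on $\gamma_{\mathbf{up}}$ with phase-plane coordinates $(R^-(c_{\mathbf{up}}),\pi/2)$, so that $|OP|=R^-(c_{\mathbf{up}})$. By the set-up of Section 3, $O$ lies in the lower closed region while $P$ lies on the outer boundary $\gamma_{\mathbf{up}}$ of the upper closed region. Lemma \ref{lem:topology_network} says the segment $\overline{OP}$ stays inside the union of the two closed regions until it exits through $\gamma_{\mathbf{up}}$ at $P$, and therefore it must cross $\gamma_{\mathbf{in}}$ at some intermediate point $Q$. Then $|OQ|<|OP|=R^-(c_{\mathbf{up}})$, whereas $Q\in\gamma_{\mathbf{in}}$ forces $|OQ|\geq R^-(c_{\mathbf{in}})$ since $R^-(c_{\mathbf{in}})$ is the minimum value of $R$ on the trajectory of energy $c_{\mathbf{in}}$. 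The strict monotonicity of $R^-$ in $c$ (decreasing) then gives $c_{\mathbf{in}}>c_{\mathbf{up}}$.

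For part (2) I argue by contradiction and assume $\gamma_{\mathbf{up}}$ completes a period on its trajectory. Then every point of the phase-plane loop is visited; in particular $(R^-(c_{\mathbf{up}}),\pi/2)$ is visited, so part (1) yields $c_{\mathbf{in}}>c_{\mathbf{up}}$. Using that $T$ is strictly decreasing,
\[
\theta_{\mathbf{in}}\;=\;\theta_{\mathbf{up}}\;\geq\;T(c_{\mathbf{up}})\;>\;T(c_{\mathbf{in}}),
\]
so $\gamma_{\mathbf{in}}$ itself completes a period and in particular attains $R=R^+(c_{\mathbf{in}})$; since $R^+$ is strictly increasing, $R^+(c_{\mathbf{in}})>R^+(c_{\mathbf{up}})$. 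On the other hand, both $\gamma_{\mathbf{up}}$ and $\gamma_{\mathbf{in}}$ are parametrised monotonically in $\theta$ over the common angular range (they share the starting and ending triple junctions), and at each such angle $\theta$ the ray from $O$ enters the upper region through $\gamma_{\mathbf{in}}$ and exits through $\gamma_{\mathbf{up}}$, giving $R_{\mathbf{in}}(\theta)\leq R_{\mathbf{up}}(\theta)\leq R^+(c_{\mathbf{up}})$. Picking $\theta$ to be the angle at which $\gamma_{\mathbf{in}}$ attains $R^+(c_{\mathbf{in}})$ contradicts $R^+(c_{\mathbf{in}})>R^+(c_{\mathbf{up}})$.

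The delicate point I expect to have to treat carefully is the precise meaning of \emph{complete period} when $\gamma_{\mathbf{up}}$ or $\gamma_{\mathbf{in}}$ is piecewise AL with jumps from $A$ to $B$ or from $D$ to $C$ at triple junctions with attached rays: such jumps skip portions of the phase-plane loop, so the accumulated $\theta$-change can in principle exceed $T(c)$ without the trajectory actually covering the whole loop. Under the natural reading that the trajectory fills the loop, the points $(R^-(c_{\mathbf{up}}),\pi/2)$ and $(R^+(c_{\mathbf{in}}),\pi/2)$ lie on the trajectory automatically and the argument above works; otherwise one has to supplement the bootstrap step by analysing the allowed jump positions to verify that the outer extremum $R^+(c_{\mathbf{in}})$ is indeed attained on $\gamma_{\mathbf{in}}$.
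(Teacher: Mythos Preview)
Your argument is correct and follows essentially the same route as the paper: part (1) is identical, and for part (2) both proofs combine part (1), the monotonicity of $T$, and the geometric fact that $\gamma_{\mathbf{in}}$ cannot reach radius $R^+(c_{\mathbf{in}})>R^+(c_{\mathbf{up}})$ without forcing an intersection with $\gamma_{\mathbf{up}}$; the paper simply runs the implications in the opposite order (first ruling out $(R^+(c_{\mathbf{in}}),\tfrac{\pi}{2})$ geometrically, then bounding $\theta_{\mathbf{in}}\le (h_1+2h_2)(c_{\mathbf{in}})<T(c_{\mathbf{up}})$).

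Two small remarks. First, your argument misses the degenerate case $c_{\mathbf{in}}=\infty$ (i.e.\ $\gamma_{\mathbf{in}}$ a line segment through $O$): then $T(c_{\mathbf{in}})$ and $R^+(c_{\mathbf{in}})$ are undefined, and the paper handles this separately via $\theta_{\mathbf{in}}=\pi<T(c_{\mathbf{up}})\le\theta_{\mathbf{up}}$. Second, your closing caveat about jumps is unnecessary: by Theorem~\ref{topology} rays are attached only to the outer arcs, so $\gamma_{\mathbf{in}}$ is a single smooth AL-curve with no $A\!\to\!B$ or $D\!\to\!C$ jumps, and ``complete period'' for $\gamma_{\mathbf{up}}$ is used in this paper to mean that the phase loop is fully covered, so $(R^-(c_{\mathbf{up}}),\tfrac{\pi}{2})$ is automatically visited.
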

\begin{proof} We need part 1 to establish part 2.

(1) If $\gamma_{\mathbf{up}}$ pass through the point
$(R^-(c_{\mathbf{up}}),\frac{\pi}{2})$ on the trajectory, since
$\gamma_{\mathbf{in}}$ lies inside, if we connect the origin with
the point corresponding to $(R^-(c_{\mathbf{up}}),\frac{\pi}{2})$
with a line segment, the line segment must intersect
$\gamma_{\mathbf{in}}$. From this, we have
$R^-(c_{\mathbf{in}})<R^-(c_{\mathbf{up}})$, this is equivalent to
$c_{\mathbf{in}}>c_{\mathbf{up}}$.

(2) If $\gamma_{\mathbf{in}}$ is nondegenrate, since
$\psi_{\mathbf{in}}>\frac{2\pi}{3}$, the starting point of
$\gamma_{\mathbf{in}}$ lies on the $BC$ arc of the trajectory, the
ending point of $\gamma_{\mathbf{in}}$ lies on the $DA$ arc of the
trajectory. Assume $\gamma_{\mathbf{in}}$ passes through  the point
corresponding to  $(R^+(c_{\mathbf{in}}),\frac{\pi}{2})$, the point
with largest $R$ on the trajectory, since
$c_{\mathbf{in}}>c_{\mathbf{up}}$,
$R^+(c_{\mathbf{in}})>R^+(c_{\mathbf{up}})$, $\gamma_{\mathbf{in}}$
and $\gamma_{\mathbf{up}}$ must intersect and we get a
contradiction. Therefore, on the phase plane, $\gamma_{\mathbf{in}}$
only achieve the part from point $B$ to point $A$ on its trajectory.
\begin{equation}
\theta_{\mathbf{in}}\leq(h_1+2h_2)(c_{\mathbf{in}})<T(c_{\mathbf{in}})<T(c_{\mathbf{up}}).
\end{equation}
If $\gamma_{\mathbf{up}}$ has a complete period on its trajectory,
$\theta_{\mathbf{up}}>T(c_{\mathbf{up}})$ and this contradict
$\theta_{\mathbf{up}}=\theta_{\mathbf{in}}$. If
$\gamma_{\mathbf{in}}$ is degenerate, we have
$\theta_{\mathbf{in}}=\pi<T(c_\mathbf{up})<\theta_{\mathbf{up}}$ and
we get a contradiction.

\end{proof}

To eliminate the possibility that this cell is a 3-cell, we need the
following lemma from \cite{BHM2}.
\begin{lem}[\cite{BHM2}]\label{pi2}
Let $\gamma$ be a shrinking curve, parametrized counterclockwise by
arc length, with positive curvature and let $(s_0,s_1)$ be an
interval where $R(s)$ is increasing. If $R_s(s_0)\geq\frac{1}{2}$,
namely,  $\psi(s_0)\leq\frac{\pi}{3}$, then
\begin{equation}
\int_{s_0}^{s_1}\frac{d\theta}{ds} ds<\frac{\pi}{2}.
\end{equation}
Similarly, if $R(s)$ is decreasing on $(s_0,s_1)$ and
$\frac{dR}{ds}(s_1)\leq-\frac{1}{2}$, namely,
$\psi(s_1)\geq\frac{2\pi}{3}$, then the same conclusion holds. This
is equivalent to $2h_2+h_3\leq\pi$.
\end{lem}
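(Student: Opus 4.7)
The plan is to first reduce the stated inequality to a clean statement about the phase-plane quantities $h_2(c)$ and $h_3(c)$, and then estimate that combination using techniques of the same flavor as the proof of Theorem~\ref{h12h2}. Since $d\theta/ds = \sin\psi/R > 0$ on the arc, $\Delta\theta$ is monotone in the interval $(s_0,s_1)$, so the worst case occurs when $(s_0,s_1)$ is as long as possible. The assumption that $R$ is increasing forces $\psi<\pi/2$, so the arc lies in the portion of the trajectory running from $(R^-(c),\pi/2)$ through $D$, $N$, $A$ to $(R^+(c),\pi/2)$. The condition $\psi(s_0)\leq\pi/3$ places $s_0$ in the sub-arc $D\to N\to A$, and the latest allowed $s_1$ is the top of the trajectory $(R^+(c),\pi/2)$. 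Because the trajectory $K(R)=c\sin\psi$ is symmetric under $\psi\mapsto\pi-\psi$, a change of variable to $k=R\sin\psi$ shows that the arcs $A\to\mathrm{top}$ and $\mathrm{top}\to B$ contribute equally to $\Delta\theta$, each equal to $h_3/2$. Hence the worst case is $\Delta\theta_{\max} = h_2 + h_3/2$, and the second case of the lemma ($R$ decreasing with $\psi(s_1)\geq 2\pi/3$) is the $\psi\mapsto\pi-\psi$ mirror of the first and gives the same worst case. It therefore suffices to prove $2h_2(c)+h_3(c) < \pi$ for every $c\geq c_*$.

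On the worst-case arc $D\to\mathrm{top}$, the curvature $k=R\sin\psi$ is strictly increasing, since $dk/ds=R^2\sin\psi\cos\psi>0$ throughout $\psi\in(0,\pi/2)$, so $k$ is a monotone parameter. Using the conservation law $R^2-2\log k=\eta$ together with $R\cos\psi=\sqrt{R^2-k^2}=\sqrt{\eta-V(k)}$, where $V(k)=k^2-2\log k$, one obtains
\[
\Delta\phi_{D\to\mathrm{top}} = \int_{k_D}^{k_{\max}}\frac{dk}{\sqrt{\eta-V(k)}},
\]
with $k_D=\tfrac{\sqrt{3}}{2}R^-(\tfrac{\sqrt{3}}{2}c)$ and $V(k_{\max})=\eta$. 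Since $\Delta\psi_{D\to\mathrm{top}}=\pi/2-\pi/3=\pi/6$, the desired bound $\Delta\theta<\pi/2$ is equivalent to the integral inequality
\[
\int_{k_D}^{k_{\max}}\frac{dk}{\sqrt{\eta-V(k)}} < \frac{2\pi}{3}.
\]

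The principal obstacle is controlling this integral uniformly in $c\in[c_*,\infty)$: the integrand has an integrable singularity at $k=k_{\max}$ (of order $1/\sqrt{k_{\max}-k}$) whose strength depends on $c$, and the lower limit $k_D$ also varies with $c$. I would handle it by splitting the interval at $k=1$ (the minimum of $V$) and, on each sub-interval, bounding $V$ from below by a convex quadratic in the spirit of Lemma~\ref{lower2nd}; in particular, a minorant chosen to agree with $V$ at the endpoint $k_{\max}$ converts the singular piece into an explicit $\arcsin$. A finite tabulation in $\eta$ in the style of the proof of Theorem~\ref{h12h2} would then handle a bounded range of $c$, while separate asymptotic estimates cover $c\to c_*$ (where $[k_D,k_{\max}]$ shrinks to zero) and $c\to\infty$ (where $k_D\to 0$ and $k_{\max}\to\infty$).
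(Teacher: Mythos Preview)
The paper does not itself prove this lemma; it is quoted from \cite{BHM2} without argument, so there is no in-paper proof to compare against.

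Your reduction is correct and clean. On the $R$-increasing half of the trajectory, the first point with $\psi\le\pi/3$ is $D$, the latest admissible $s_1$ is the right extremum $(R^+(c),\pi/2)$, and by the $\psi\mapsto\pi-\psi$ symmetry one has $\Delta\theta_{A\to\mathrm{right}}=\tfrac12 h_3$. Thus the supremum of $\Delta\theta$ over admissible arcs at energy $c$ is exactly $h_2(c)+\tfrac12 h_3(c)$, and the lemma is equivalent to $2h_2(c)+h_3(c)\le\pi$ for all $c\ge c_*$, as the statement itself records. The passage to the integral inequality $\int_{k_D}^{k_{\max}}(\eta-V(k))^{-1/2}\,dk<2\pi/3$ via $\Delta\theta=\Delta\phi-\Delta\psi$ with $\Delta\psi=\pi/6$ is also correct.

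Your plan to bound that integral by quadratic minorants of $V$ plus a finite tabulation, in the style of Theorem~\ref{h12h2}, is viable, but one asymptotic claim in your sketch is wrong: as $c\downarrow c_*$ the interval $[k_D,k_{\max}]$ does \emph{not} shrink to a point. At $c=c_*$ one has $\tfrac{\sqrt3}{2}c_*=1$, hence $R^-(\tfrac{\sqrt3}{2}c_*)=1$ and $k_D=\tfrac{\sqrt3}{2}$, while $k_{\max}(c_*)=R^+(c_*)>1$; the integral is therefore bounded away from zero at $c=c_*$ and that endpoint must be handled by the same comparison estimates as the interior range rather than by a vanishing argument. (What collapses at $c_*$ is $h_2$, since $D=N=A$ there; correspondingly $2h_2+h_3$ reduces to $h_3(c_*)<h_1(c_*)<\pi$ by Lemma~\ref{lem:left_right_ineq} and Corollary~\ref{cor:h1upper}.) The large-$c$ end, where indeed $k_D\to 0$ and $k_{\max}\to\infty$, is genuinely asymptotic and your description there is reasonable.
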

\begin{thm}
The upper cell cannot be a 3-cell.
\end{thm}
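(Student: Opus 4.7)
Suppose for contradiction that the upper cell is a $3$-cell. Since no ray can attach to $\gamma_{\mathbf{in}}$ (whose two sides both bound closed regions), the single ray junction must lie on $\gamma_{\mathbf{up}}$, which is thereby split into two AL-pieces. By Lemma \ref{same_energy} the two pieces share a common energy $c_{\mathbf{up}}\geq c_*$, and the junction realizes either an A-to-B jump at $R^+(\tfrac{\sqrt 3}{2}c_{\mathbf{up}})>1$ or a D-to-C jump at $R^-(\tfrac{\sqrt 3}{2}c_{\mathbf{up}})<1$ on the common trajectory. Parametrize the configuration by $\psi_{\mathbf{up}}\in[0,\pi/3]$; Proposition \ref{prop:sym_start_end} then gives $\psi_{\mathbf{in}}=\psi_{\mathbf{up}}+\tfrac{2\pi}{3}$, $R_{\mathbf{start}}=R_{\mathbf{end}}$, and the ratio $c_{\mathbf{in}}/c_{\mathbf{up}}=\sin\psi_{\mathbf{up}}/\sin(\psi_{\mathbf{up}}+\tfrac{2\pi}{3})$.

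The plan is to derive a contradiction from $\theta_{\mathbf{up}}=\theta_{\mathbf{in}}$ via complementary bounds, split by the jump type and by which branch ($R>1$ or $R<1$) each endpoint lies on. The lower bound on $\theta_{\mathbf{in}}$ comes from the observation that its start lies on the BC-arc (since $\psi_{\mathbf{in}}>\tfrac{2\pi}{3}$) and its end on the DA-arc (since $\psi_{\mathbf{end,in}}<\tfrac{\pi}{3}$); embeddedness forbids $\gamma_{\mathbf{in}}$ from completing a period, so it must traverse the entire CD-arc, giving $\theta_{\mathbf{in}}\geq h_1(c_{\mathbf{in}})+2h_2^L(c_{\mathbf{in}})>h_1(c_{\mathbf{in}})+h_2(c_{\mathbf{in}})$ via Lemma \ref{lem:left_right_ineq}. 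For the upper bound on $\theta_{\mathbf{up}}$, Lemma \ref{lem:no_period_up}(2) forbids a complete period, so $\gamma_{\mathbf{up}}$ decomposes into two short arcs meeting at the jump, whose total $\theta$-change is expressible through partial-$h_2$ integrals and controlled by $h_2(c_{\mathbf{up}})$ (plus possibly $h_1(c_{\mathbf{up}})$ in cases where the trajectory dips below $R=1$).

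In the cleanest sub-case (A-to-B with $R_{\mathbf{start}}>1$, so $\gamma_{\mathbf{up}}$ remains on the right branch throughout), one obtains explicitly $\theta_{\mathbf{up}}=2\int_{\psi_{\mathbf{up}}}^{\pi/3}d\psi/([R^+(c_{\mathbf{up}}\sin\psi)]^2-1)<h_2(c_{\mathbf{up}})$, reducing the contradiction to the inequality $h_2(c_{\mathbf{up}})<h_1(c_{\mathbf{in}})+h_2(c_{\mathbf{in}})$, which follows from Theorem \ref{h12h2}, Corollary \ref{cor:h1upper}, and the explicit energy ratio. For the D-to-C jump, $\gamma_{\mathbf{up}}$ must dip into the unit disk to reach its junction and hence passes through $(R^-(c_{\mathbf{up}}),\tfrac{\pi}{2})$; Lemma \ref{lem:no_period_up}(1) then supplies $c_{\mathbf{in}}>c_{\mathbf{up}}$, and the monotonicity of $h_1, h_3$ combined with $h_1+2h_2>\tfrac{2\pi}{3}$ closes this case. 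The principal obstacle I anticipate is the mixed A-to-B sub-case with $R_{\mathbf{start}}<1$: there $\gamma_{\mathbf{up}}$ visits both branches so the upper bound for $\theta_{\mathbf{up}}$ can reach nearly $2h_2(c_{\mathbf{up}})$ while the corresponding lower bound for $\theta_{\mathbf{in}}$ loses its right-branch contribution, so the contradiction must be extracted from a careful comparison of the exact integral expressions against the symmetry relation $\psi_{\mathbf{start}}+\psi_{\mathbf{end}}=\pi$ and the relation between $c_{\mathbf{up}}$ and $c_{\mathbf{in}}$.
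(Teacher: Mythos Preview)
Your proposal is not a complete proof: you explicitly flag the mixed sub-case (an $A$-to-$B$ jump with $R_{\mathbf{start}}<1$) as ``the principal obstacle'' and leave it unresolved. That case is genuinely the hard one for your strategy, because the upper bound on $\theta_{\mathbf{up}}$ can approach $2h_2(c_{\mathbf{up}})$ while your lower bound on $\theta_{\mathbf{in}}$ weakens, and nothing in the lemmas you cite closes the gap. Your ``cleanest sub-case'' argument is also shaky: the claim $\theta_{\mathbf{up}}<h_2(c_{\mathbf{up}})$ is not obvious, since $2\Delta\theta_{SA}$ (twice a partial right-branch integral) is not automatically dominated by $h_2=\Delta\theta_{DA}$, which mixes left- and right-branch contributions.

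The paper proceeds along an entirely different line, avoiding your case split by jump type altogether. The key input is Lemma~\ref{pi2} (from \cite{BHM2}): on any arc where $R$ is monotone and the initial $\psi\le\pi/3$ (or terminal $\psi\ge2\pi/3$), the change of angle is strictly less than $\pi/2$. The argument runs as follows. First, using Lemma~\ref{lem:no_period_up}, one shows that neither piece of $\gamma_{\mathbf{up}}$ can pass through the rightmost point $(R^+(c_{\mathbf{up}}),\pi/2)$ of the trajectory; otherwise the union of the two pieces would cover a full period. Hence $R$ is strictly monotone on each of $\gamma_{\mathbf{up}}^1$ and $\gamma_{\mathbf{up}}^2$. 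Second, one uses the geometry of the picture: the line through $S$ and $E$ is horizontal, say $y=m$. If $m\le 0$, then $\theta_{\mathbf{up}}\ge\pi$, so one of the two pieces has $\Delta\theta\ge\pi/2$; since that piece has $\psi\le\pi/3$ at its outer end (the junction is at $A$ or $D$), this contradicts Lemma~\ref{pi2}. If $m>0$, one extends the two pieces past $S$ and $E$ until they meet the $x$-axis; the extended pieces remain monotone in $R$ with $\psi\le\pi/3$ at the $x$-axis, and together span exactly $\pi$ in $\theta$, so again one of them violates Lemma~\ref{pi2}. This geometric extension trick is what your phase-plane bookkeeping is missing.
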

\begin{proof}
For the 3-cell we are studying, label the triple junction connected to the ray as $P$. Use the notation from the previous section, we have the starting point $S$ and the ending point $E$. 
The curve $\gamma_{\mathbf{in}}$ goes directly from $S$ to $E$. The
piecewise smooth curve $\gamma_{\mathbf{up}}$ goes from $S$ to $P$
and then from $P$ to $E$. We name the part from $S$ to $P$ as
$\gamma_{\mathbf{up}}^1$ and the second part from $P$ to $E$ as
$\gamma_{\mathbf{up}}^2$.

If $\gamma_{\mathbf{up}}$ passes through the point
$(R^+(c_{\mathbf{up}}),\frac{\pi}{2})$ on the trajectory, without
loss of generality, assume it happens on $\gamma_{\mathbf{up}}^1$.
On the phase plane, $\gamma_{\mathbf{up}}^1$ starts at a point on
the $DA$ arc, passes through $(R^+(c_{\mathbf{up}}),\frac{\pi}{2})$
on the trajectory and at the ending P of $\gamma_{\mathbf{up}}^1$,
the corresponding point must be either $D$ or $A$. If it ends at
point $A$ on the phase plane, we have a complete period of
$(R,\psi)$ when traversing $\gamma_1$. If it ends at point $D$,
consider the curve $\gamma_{\mathbf{up}}^2$, the starting point on
the phase plane is $C$ and it ends somewhere between $B$ and $C$.
Therefore, $\gamma_{\mathbf{up}}$ covers a complete period of the
trajectory$(R,\psi)$. This is impossible from lemma
\ref{lem:no_period_up}.

From the previous part, $\gamma_{\mathbf{up}}^1$ and
$\gamma_{\mathbf{up}}^2$ do not pass through
$(R^+(c_{\mathbf{up}}),\frac{\pi}{2})$ on the phase plane. $R$ is
strictly increasing on $\gamma_{\mathbf{up}}^1$ and is strictly
decreasing on $\gamma_{\mathbf{up}}^2$. Now, we separate into 2
cases. From the previous section, we choose the coordinate such that
the line passes through point $S$ and point $E$ is parallel to the
$x$-axis. Let $y=m$ be the equation for this line.

\emph{Case 1: $m>0$}. Let $s_1^*$ be the arc length parameter at the
start of $\gamma_{\mathbf{up}}^1$. Since $\gamma_{\mathbf{up}}^1$,
$\gamma_{\mathbf{in}}$ are above $L$, the angle between
$\frac{d\gamma_{\mathbf{up}}^1}{ds}(s_1^*)$ and $(1,0)$ is less than
or equal to $\frac{\pi}{3}$. Using the ODE describing the
self-shrinking curve, we can extend $\gamma_{\mathbf{up}}^1$ to
$s<s_1^*$. This curve must intersect positive $x$-axis at some
$\tilde{s}_1$. Since the curvature is positive, the angle between
$\gamma_{\mathbf{up}}^1(\tilde{s}_1)$ and $(1,0)$ is less than or
equal to $\frac{\pi}{3}$. Similarly, let $s_2^*$ be the arc length
parameter at the end of $\gamma_{\mathbf{up}}^2$. We can extend
$\gamma_{\mathbf{up}}^2$ beyond $s_2^*$ to intersect negative
$x$-axis at $\gamma_{\mathbf{up}}^2(\tilde{s}_2)$ and the angle
between $\frac{d\gamma_{\mathbf{up}}^2}{ds}(\tilde{s}_2)$ and
$(1,0)$ is less than or equal to $\frac{\pi}{3}$. The change of
angle on the extended curve from
$\gamma_{\mathbf{up}}^1(\tilde{s}_1)$ to
$\gamma_{\mathbf{up}}^2(\tilde{s}_2)$ is exactly $\pi$. There should
be at least one extended curve with change of angle greater then or
equal to $\frac{\pi}{2}$. Without loss of generality, assume
extended $\gamma_{\mathbf{up}}^1$ has this property. Since from the
starting point to the end point of extended
$\gamma_{\mathbf{up}}^1$, $R$ is monotonically increasing, we obtain
a contradiction by lemma \ref{pi2}.

\emph{Case 2: $m\leq 0$}. Either the change of angle $\theta$ of
$\gamma_{\mathbf{up}}^1$  or $\gamma_{\mathbf{up}}^2$ is greater
than or equal to $\frac{\pi}{2}$ since their summation must exceed
$\pi$. Without loss of generality, we can assume
$\gamma_{\mathbf{up}}^1$ satisfies condition. Note that
$\psi_{\mathbf{up}}\leq\frac{\pi}{3}$ at the start of
$\gamma_{\mathbf{up}}^1$. Since from the starting point to the end
point of $\gamma_{\mathbf{up}}^1$, $R$ is monotonically increasing,
we obtain a contradiction by lemma \ref{pi2}.
\end{proof}

\begin{thm}
If the upper cell is a 4-cell, the curve $\gamma_{\mathbf{up}}$ on
the phase plane must be $SA\rightarrow BA\rightarrow BE$. We also
have $c_{\mathbf{in}}>c_{\mathbf{up}}$ and
$\frac{\pi}{6}<\psi_{\mathbf{up}}\leq\frac{\pi}{3}$.
\end{thm}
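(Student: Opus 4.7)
My plan is to enumerate the candidate phase-plane structures of $\gamma_{\mathbf{up}}$ and use the no-complete-period criterion of Lemma~\ref{lem:no_period_up} to isolate a single possibility, from which the three assertions follow. Since the 4-cell has four edges of which $\gamma_{\mathbf{in}}$ supplies one, $\gamma_{\mathbf{up}}$ consists of exactly three AL-curve segments joined at two triple junctions carrying rays. By Lemma~\ref{same_energy} these three segments share one energy $c_{\mathbf{up}}$ and thus lie on a single phase-plane trajectory, with each ray-junction producing a jump either $A\to B$ or $D\to C$. The starting point $S$ has $\psi_{\mathbf{up}}\in(0,\pi/3]$, placing $S$ on the lower arc, while by Proposition~\ref{prop:sym_start_end} the endpoint $E$ lies on the upper arc at $\psi=\pi-\psi_{\mathbf{up}}$ with $K(R_{\mathbf{start}})=K(R_{\mathbf{end}})$.

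Next I would run through the four possible jump-sequences $(A\to B,A\to B)$, $(A\to B,D\to C)$, $(D\to C,A\to B)$, $(D\to C,D\to C)$. In any case containing a $D\to C$ jump, at least one segment is forced to traverse nearly a full period counterclockwise in order to reach its required endpoint: when the first jump is $D\to C$, segment~1 must leave $S$ and run through $A,R^+,B,M,C,R^-$ before reaching $D$, contributing at least $h_3+h_2+h_1$ to $\theta_{\mathbf{up}}$; when the second jump is $D\to C$, segment~3 must leave $C$ and traverse $R^-,D,N,A,R^+,B$ before arriving at $E$ on the upper arc, again contributing at least $h_1+h_2+h_3$. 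Summing these with the contributions of the remaining segments shows case-by-case that $\theta_{\mathbf{up}}>T(c_{\mathbf{up}})$, contradicting Lemma~\ref{lem:no_period_up}(2). Only the sequence $(A\to B,A\to B)$ survives, giving the structure $SA\to BA\to BE$ in which the middle segment runs $B\to M\to C\to R^-(c_{\mathbf{up}})\to D\to N\to A$. Since this middle segment passes through the leftmost trajectory point $(R^-(c_{\mathbf{up}}),\pi/2)$, Lemma~\ref{lem:no_period_up}(1) immediately gives $c_{\mathbf{in}}>c_{\mathbf{up}}$.

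For the lower bound $\psi_{\mathbf{up}}>\pi/6$ I would convert the no-complete-period condition into a sharp integral inequality. Decomposing $\theta_{\mathbf{up}}=\Delta\theta_{SA}+(h_1+2h_2)+\Delta\theta_{BE}$ and invoking the $\psi\leftrightarrow\pi-\psi$ symmetry of the trajectory to get $\Delta\theta_{SA}=\Delta\theta_{BE}$, the condition $\theta_{\mathbf{up}}<T(c_{\mathbf{up}})=h_1+2h_2+h_3$ reduces, in the generic case $R_{\mathbf{start}}>1$, to
\begin{equation*}
\int_{\psi_{\mathbf{up}}}^{\pi/3}\frac{d\psi}{[R^+(c_{\mathbf{up}}\sin\psi)]^2-1}<\int_{\pi/3}^{\pi/2}\frac{d\psi}{[R^+(c_{\mathbf{up}}\sin\psi)]^2-1}.
\end{equation*}
Since the integrand is monotonically decreasing in $\psi$ on $[0,\pi/2]$, bounding the left side below by $(\pi/3-\psi_{\mathbf{up}})$ times its value at $\psi=\pi/3$ and the right side above by $(\pi/6)$ times the same value forces $\pi/3-\psi_{\mathbf{up}}<\pi/6$, i.e.\ $\psi_{\mathbf{up}}>\pi/6$. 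The subcase $R_{\mathbf{start}}<1$ only increases $\Delta\theta_{SA}$ via Lemma~\ref{lem:left_right_ineq}, so the same conclusion holds. The main technical obstacle is the routing verification in the case-by-case elimination, ensuring that no nongeneric segment (in particular one starting exactly at $S=A$ or $S=D$) can evade the period-length tally; that reduces to the fact that each segment must have positive phase-plane length and can terminate at a ray junction only after reaching $\psi=\pi/3$ at a point other than $S$.
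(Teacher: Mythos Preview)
Your enumeration argument for the phase-plane structure and the derivation of $c_{\mathbf{in}}>c_{\mathbf{up}}$ match the paper's proof exactly: both use Lemma~\ref{lem:no_period_up}(2) to rule out any $D\to C$ jump (since it forces a full trajectory period) and then Lemma~\ref{lem:no_period_up}(1) for the energy comparison. Your case analysis is more detailed but the content is identical.

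For the bound $\psi_{\mathbf{up}}>\pi/6$, however, you take a genuinely different and considerably longer route. The paper simply reads it off the inequality $c_{\mathbf{in}}>c_{\mathbf{up}}$: since $c_{\mathbf{in}}=K(R_{\mathbf{start}})/\sin\psi_{\mathbf{in}}$ and $c_{\mathbf{up}}=K(R_{\mathbf{start}})/\sin\psi_{\mathbf{up}}$ with $\psi_{\mathbf{in}}=\psi_{\mathbf{up}}+\tfrac{2\pi}{3}$, the energy inequality is equivalent to $\sin(\psi_{\mathbf{up}}+\tfrac{2\pi}{3})<\sin\psi_{\mathbf{up}}$, i.e.\ $\tan\psi_{\mathbf{up}}>1/\sqrt{3}$, which is $\psi_{\mathbf{up}}>\pi/6$. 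This is a one-line trigonometric consequence of a fact you already established. Your approach instead extracts the consequence $\theta_{\mathbf{up}}<T(c_{\mathbf{up}})$ from the proof of Lemma~\ref{lem:no_period_up} and then compares the two integrals via monotonicity of the integrand; this is valid, but the symmetry $\Delta\theta_{SA}=\Delta\theta_{BE}$ you invoke only holds when $R_{\mathbf{start}}=R_{\mathbf{end}}$, and your remark that ``$R_{\mathbf{start}}<1$ only increases $\Delta\theta_{SA}$'' needs a line of justification (in the mixed case one should work with the smaller of $\Delta\theta_{SA},\Delta\theta_{BE}$, which still dominates $(\pi/3-\psi_{\mathbf{up}})f(\pi/3)$). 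The argument can be completed, but the paper's direct route avoids all of this.
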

\begin{proof}
On $\gamma_{\mathbf{up}}$, the starting point lies on the $DA$ arc
and the ending point lies on the $BC$ arc. The only possibility for
$\gamma_{\mathbf{up}}$ does not have a complete period on the
trajectory is that all the triple junction goes from $A$ to $B$.
Note that the curve from a triple junction to another triple
junction must pass through $(R^-(c_{\mathbf{up}}),\frac{\pi}{2})$,
we have  $c_{\mathbf{in}}>c_{\mathbf{up}}$. Use
$c_{\mathbf{in}}=\frac{K(R_{\mathbf{start}})}{\psi_{\mathbf{in}}}$,
$c_{\mathbf{up}}=\frac{K(R_{\mathbf{start}})}{\psi_{\mathbf{up}}}$
and $\psi_{\mathbf{in}}=\psi_{\mathbf{up}}+\frac{2\pi}{3}$, we have
$\frac{\pi}{6}<\psi_{\mathbf{up}}\leq\frac{\pi}{3}$.
\end{proof}

\begin{thm}
The upper cell cannot be a 5-cell.
\end{thm}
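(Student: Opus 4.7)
The plan is to enumerate all possible phase-plane configurations of $\gamma_{\mathbf{up}}$ in a 5-cell and show each one forces $\theta_{\mathbf{up}}>T(c_{\mathbf{up}})$, contradicting Lemma \ref{lem:no_period_up}.

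In a 5-cell, $\gamma_{\mathbf{up}}$ consists of four smooth AL-arcs joined at three interior triple junctions, each carrying a ray. By Lemma \ref{same_energy} the four arcs share a single energy $c_{\mathbf{up}}$, and at each junction the phase point performs either an $A\to B$ or a $D\to C$ switch on the trajectory $K(R)=c_{\mathbf{up}}\sin\psi$. So the configuration is encoded by a word in $\{AB,DC\}^{3}$, giving eight candidates in total.

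For each candidate I would compute $\theta_{\mathbf{up}}$ by adding the angular lengths of the arcs between consecutive switches, using $\Delta\theta_{BA}=h_1+2h_2$, $\Delta\theta_{BD}=\Delta\theta_{CA}=h_1+h_2$, $\Delta\theta_{CD}=h_1$, together with the partial contributions $\alpha=\Delta\theta_{SA}\ge 0$ and $\beta=\Delta\theta_{BE}\ge 0$ at the two ends (and $\alpha=\beta$ by Proposition \ref{prop:sym_start_end}). Any configuration whose first switch is $D\to C$ forces the initial arc $S\to D$ to contribute $\alpha+h_1+h_2+h_3$, which already by itself is close to $T$, so the total $\theta_{\mathbf{up}}$ is clearly far above $T$; symmetrically for any configuration whose last switch is $D\to C$. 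This leaves exactly $(AB,AB,AB)$ with $\theta_{\mathbf{up}}=\alpha+\beta+2(h_1+2h_2)$ and $(AB,DC,AB)$ with $\theta_{\mathbf{up}}=\alpha+\beta+2(h_1+h_2)$ as the two cases where $\theta_{\mathbf{up}}$ is smallest.

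In both surviving cases I would verify $\theta_{\mathbf{up}}-T(c_{\mathbf{up}})>0$ directly: for $(AB,DC,AB)$ the excess is $\alpha+\beta+h_1-h_3$, which is positive by Lemma \ref{lem:left_right_ineq}, and for $(AB,AB,AB)$ the excess is $\alpha+\beta+(h_1-h_3)+2h_2$, which is even larger. Meanwhile, every 5-cell configuration makes $\gamma_{\mathbf{up}}$ cross the leftmost point $(R^-(c_{\mathbf{up}}),\pi/2)$ of its trajectory (the piece connecting any $B$ or $C$ to any $A$ or $D$ must traverse the left arc), so Lemma \ref{lem:no_period_up}(1) applies to give $c_{\mathbf{in}}>c_{\mathbf{up}}$. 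Monotonicity of $T$ then yields $\theta_{\mathbf{in}}\le(h_1+2h_2)(c_{\mathbf{in}})<T(c_{\mathbf{in}})<T(c_{\mathbf{up}})$, so together with $\theta_{\mathbf{up}}=\theta_{\mathbf{in}}$ one gets $T(c_{\mathbf{up}})<\theta_{\mathbf{up}}=\theta_{\mathbf{in}}<T(c_{\mathbf{up}})$, the desired contradiction.

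The main obstacle is simply the exhaustive case check across the eight switch sequences; the core mechanism is the single inequality $h_1>h_3$ from Lemma \ref{lem:left_right_ineq} that was already the engine of the 4-cell analysis, now applied once more to absorb the extra triple junction. In particular no new analytic estimate such as Theorem \ref{h12h2} is required for this step.
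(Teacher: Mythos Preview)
Your proposal is correct and is essentially the paper's own argument: both reduce to the two minimal switch sequences $(AB,DC,AB)$ and $(AB,AB,AB)$ (the paper's $SA\!\to\! BD\!\to\! CA\!\to\! BE$ and $SA\!\to\! BA\!\to\! BA\!\to\! BE$), obtain $\theta_{\mathbf{up}}\ge 2h_1+2h_2$, and finish with $h_1>h_3$ to exceed $T(c_{\mathbf{up}})$, contradicting the bound from Lemma~\ref{lem:no_period_up}. One harmless slip: Proposition~\ref{prop:sym_start_end} only gives $K(R_{\mathbf{start}})=K(R_{\mathbf{end}})$, so $\alpha=\beta$ can fail when the two radii straddle $1$, but you never actually use $\alpha=\beta$, only $\alpha,\beta\ge 0$.
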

\begin{proof}
Consider the curve from a triple junction to another triple
junction. It starts at either $B$ or $C$ and it end at either $D$ or
$A$ on the trajectory. It must pass through
$(R^-(c_{\mathbf{up}}),\frac{\pi}{2})$, we have
$c_{\mathbf{in}}>c_{\mathbf{up}}$. Again, on $\gamma_{\mathbf{up}}$,
the starting point lies on the $DA$ arc and the ending point lies on
the $BC$ arc. The only possibility for $\gamma_{\mathbf{up}}$ does
not have a complete period on the trajectory is $SA\rightarrow
BD\rightarrow CA\rightarrow BE$ or $SA\rightarrow BA\rightarrow
BA\rightarrow BE$. Therefore,
$\theta_{\mathbf{up}}\geq(2h_1+2h_2)(c_{\mathbf{up}})$. Using
$h_1>h_3$, the change of angle is greater than $T(c_{\mathbf{up}})$.
Use the argument as in the proof of lemma \ref{lem:no_period_up}, we
can conclude that there does not exist such 5-cell.
\end{proof}
\begin{rmk}
The theorems about the upper cells are not restrict to a
$\Theta$-shaped network. They can be applied to any closed region in
a regular shrinker with only 1 edge connected to another closed
region and without the origin inside.
\end{rmk}
\begin{rmk}
From the theorem above, we can conclude the regular shrinker with
the topology of Cisgeminate 4-ray star proposed in the appendix of
\cite{MNPS} does not exist.
\end{rmk}

\section{The structure of the lower curve}

For a regular shrinker, any closed region has at most 5 edges.
Furthermore, for a $\Theta$-shaped network with lines, there is at
least one closed region which does not enclose the origin. From the
previous section, such closed region must be a 4-cell. Now, there
are 4 topology type remain possible: a 4-cell together with either a
5-cell, a 4-cell, a 3-cell, a 2-cell. From now on, we use $S$, $E$
to denote the starting point and the ending point on the trajectory
respectively.

\begin{prop}\label{prop:energy_comparison}
For the energy of the 3 curves, we have
$c_{\mathbf{in}}>c_{\mathbf{up}}\geq c_{\mathbf{down}}$.
\end{prop}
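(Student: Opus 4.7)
The plan is to split $c_{\mathbf{in}} > c_{\mathbf{up}} \geq c_{\mathbf{down}}$ into its two constituent inequalities and handle them separately. The first inequality $c_{\mathbf{in}} > c_{\mathbf{up}}$ is essentially already in hand: Section 4 narrowed the upper cell to be a 4-cell, and the corresponding 4-cell theorem there explicitly records both $c_{\mathbf{in}} > c_{\mathbf{up}}$ and the angle bound $\frac{\pi}{6} < \psi_{\mathbf{up}} \leq \frac{\pi}{3}$. So the only new work is for the second inequality, and it reduces to a short sine-monotonicity calculation.

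For $c_{\mathbf{up}} \geq c_{\mathbf{down}}$, I exploit the fact that all three of $\gamma_{\mathbf{up}}$, $\gamma_{\mathbf{in}}$, $\gamma_{\mathbf{down}}$ emanate from the starting point, a triple junction of the original $\Theta$ network, and therefore share the radial coordinate $R_{\mathbf{start}}$. By the conservation law (\ref{eq:conservation_law}), each energy takes the form $K(R_{\mathbf{start}})/\sin\psi$, so comparing $c_{\mathbf{up}}$ with $c_{\mathbf{down}}$ reduces to comparing $\sin\psi_{\mathbf{up}}$ with $\sin\psi_{\mathbf{down}}$. The triple-junction identity $\psi_{\mathbf{down}} = \frac{2\pi}{3} - \psi_{\mathbf{up}}$ recorded at the end of Section 3, combined with $\psi_{\mathbf{up}} \in (\frac{\pi}{6}, \frac{\pi}{3}]$, places $\psi_{\mathbf{down}}$ in $[\frac{\pi}{3}, \frac{\pi}{2})$. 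Both angles then lie in $(0, \frac{\pi}{2}]$, where sine is strictly increasing, and the ordering $\psi_{\mathbf{up}} \leq \frac{\pi}{3} \leq \psi_{\mathbf{down}}$ yields $\sin\psi_{\mathbf{up}} \leq \sin\psi_{\mathbf{down}}$. Hence $c_{\mathbf{up}} \geq c_{\mathbf{down}}$, with equality precisely in the symmetric case $\psi_{\mathbf{up}} = \frac{\pi}{3}$.

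I do not expect any serious obstacle here: the statement is essentially a bookkeeping consequence of the angle bounds obtained in Section 4 together with the junction identities collected in Section 3. The only subtlety worth flagging is the possibility that $\gamma_{\mathbf{down}}$ is itself a straight line through the origin, in which case $c_{\mathbf{down}}$ would be infinite and the formula $c = K(R_{\mathbf{start}})/\sin\psi$ degenerates. This scenario is ruled out by Lemma \ref{same_energy}: a line segment meeting at the starting point would force $c_{\mathbf{up}} = c_{\mathbf{in}}$, contradicting the strict inequality $c_{\mathbf{in}} > c_{\mathbf{up}}$ that is already in hand, so $\sin\psi_{\mathbf{down}} > 0$ and the argument above goes through without modification.
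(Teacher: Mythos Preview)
Your proposal is correct and follows essentially the same approach as the paper: invoke the Section~4 results to get $c_{\mathbf{in}}>c_{\mathbf{up}}$ and $\frac{\pi}{6}<\psi_{\mathbf{up}}\leq\frac{\pi}{3}$, deduce $\frac{\pi}{3}\leq\psi_{\mathbf{down}}<\frac{\pi}{2}$ from $\psi_{\mathbf{down}}=\frac{2\pi}{3}-\psi_{\mathbf{up}}$, and then use $\psi_{\mathbf{up}}\leq\frac{\pi}{3}\leq\psi_{\mathbf{down}}$ together with the conservation law at the shared starting point to conclude $c_{\mathbf{up}}\geq c_{\mathbf{down}}$. Your extra paragraph ruling out $\gamma_{\mathbf{down}}$ being a line through the origin is a harmless sanity check, but note that $\psi_{\mathbf{down}}\in[\frac{\pi}{3},\frac{\pi}{2})$ already guarantees $\sin\psi_{\mathbf{down}}>0$, so the degenerate case never arises.
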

\begin{proof}
We have $c_{\mathbf{in}}>c_{\mathbf{up}}$ and
$\frac{\pi}{6}<\psi_{\mathbf{up}}\leq\frac{\pi}{3}$ from the
previous section. Therefore,
$\frac{\pi}{3}\leq\psi_{\mathbf{down}}<\frac{\pi}{2}$. From
$\psi_{\mathbf{up}}\leq\frac{\pi}{3}\leq\psi_{\mathbf{down}}$, we
obtain $c_{\mathbf{up}}\geq c_{\mathbf{down}}$.
\end{proof}
\begin{prop}\label{inout}
If $R_{\mathbf{start}}<1$ or $R_{\mathbf{end}}<1$, the change of
angle $\theta_{\mathbf{in}}\leq\pi$.
\end{prop}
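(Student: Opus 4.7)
The plan is to show that in both possible realizations of the hypothesis, the phase-plane trajectory of $\gamma_{\mathbf{in}}$ from start to end sweeps strictly less angle than the full left-hand arc from $M$ to $N$ of the trajectory $K(R)=c_{\mathbf{in}}\sin\psi$, after which Proposition~\ref{thm:half_upper_bound} gives $\theta_{\mathbf{in}}<\pi$. First I would invoke Proposition~\ref{prop:sym_start_end} to get $K(R_{\mathbf{start}})=K(R_{\mathbf{end}})$, so that the strict monotonicity of $K$ on $(0,1)$ and on $(1,\infty)$ reduces the hypothesis to two subcases: (a) $R_{\mathbf{start}}=R_{\mathbf{end}}<1$, or (b) one is $<1$ and the other $>1$. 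From Section~4, $\psi_{\mathbf{up}}\in(\pi/6,\pi/3]$, so $\psi_{\mathbf{in}}=\psi_{\mathbf{up}}+\frac{2\pi}{3}\in(\frac{5\pi}{6},\pi]$: the start lies on the upper-left branch of the trajectory ($R<1$, $\psi>\pi/2$) and the end lies on the lower half with $\pi-\psi_{\mathbf{in}}<\pi/6$.

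In case (a), as $s$ increases along $\gamma_{\mathbf{in}}$ the phase point slides from $(R_{\mathbf{start}},\psi_{\mathbf{in}})$ down through the leftmost point $(R^-(c_{\mathbf{in}}),\pi/2)$ and back up to $(R_{\mathbf{start}},\pi-\psi_{\mathbf{in}})$, remaining in $R<1$. The inequalities $\sin^{-1}(1/c_{\mathbf{in}})\leq\pi-\psi_{\mathbf{in}}$ and $\psi_{\mathbf{in}}\leq\pi-\sin^{-1}(1/c_{\mathbf{in}})$ (both equivalent to $c_{\mathbf{in}}\sin\psi_{\mathbf{in}}=K(R_{\mathbf{start}})\geq 1$) identify this arc as a sub-arc of the full $M$-to-$N$ arc, and so $\theta_{\mathbf{in}}<\Delta\theta_{MN}(c_{\mathbf{in}})<\pi$.

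In case (b), WLOG $R_{\mathbf{start}}<1<R_{\mathbf{end}}$; the arc now crosses leftmost, passes through $N(c_{\mathbf{in}})$, and continues on the lower-right branch to end. I would define $\alpha$ as the change of $\theta$ along the trajectory from $M(c_{\mathbf{in}})$ counterclockwise to the starting point (the upper-left piece we skip) and $\beta$ as the change from $N(c_{\mathbf{in}})$ counterclockwise to the endpoint (the lower-right extra piece), so that
\[
\theta_{\mathbf{in}}=\Delta\theta_{MN}(c_{\mathbf{in}})-\alpha+\beta.
\]
Substituting $u=\pi-\psi$ in the $\alpha$-integral (using $\sin(\pi-\psi)=\sin\psi$) expresses $\alpha$ and $\beta$ as integrals over the common interval $(\sin^{-1}(1/c_{\mathbf{in}}),\pi-\psi_{\mathbf{in}})$ with integrands $\frac{1}{1-[R^-(c_{\mathbf{in}}\sin u)]^2}$ and $\frac{1}{[R^+(c_{\mathbf{in}}\sin u)]^2-1}$ respectively; Lemma~\ref{lem:left_right_ineq} then gives the pointwise domination $\alpha>\beta$, hence once more $\theta_{\mathbf{in}}<\Delta\theta_{MN}(c_{\mathbf{in}})<\pi$. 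The main technical point will be verifying that $\gamma_{\mathbf{in}}$ traces exactly this direct portion of the phase trajectory rather than first completing one or more extra periods (each of which would add $T(c_{\mathbf{in}})>\pi$ to $\theta_{\mathbf{in}}$); I expect this to follow from embeddedness of the network together with the structural role of $\gamma_{\mathbf{in}}$ as the shared boundary between the 4-cell and the other closed region, in the spirit of Lemma~\ref{lem:no_period_up}.
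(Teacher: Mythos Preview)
Your approach is essentially identical to the paper's: same case split via $K(R_{\mathbf{start}})=K(R_{\mathbf{end}})$, same sub-arc argument in case (a), and in case (b) the paper proves precisely your inequality $\alpha>\beta$ (written there as $\Delta\theta_{MS}\geq\Delta\theta_{NE}$) using the same symmetry $\psi\mapsto\pi-\psi$ together with Lemma~\ref{lem:left_right_ineq}, then concludes via Proposition~\ref{thm:half_upper_bound}. The ``no extra period'' step you flag is exactly what was established inside the proof of Lemma~\ref{lem:no_period_up} (the embeddedness argument showing $\gamma_{\mathbf{in}}$ stays on the arc from $B$ to $A$), so your instinct there is correct.

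One small omission: you have not handled the degenerate case $c_{\mathbf{in}}=\infty$, i.e.\ $\psi_{\mathbf{in}}=\pi$ and $\gamma_{\mathbf{in}}$ a line segment through the origin. There $\theta_{\mathbf{in}}=\pi$ exactly, which is the source of the non-strict inequality in the statement; your argument as written only yields a strict bound and uses quantities like $\sin^{-1}(1/c_{\mathbf{in}})$ that are undefined in this case. The paper treats this case first in one line before proceeding as you do.
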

\begin{proof}
For the special case $c_{\mathbf{in}}=\infty$, we have
$\theta_{\mathbf{in}}=\pi$. Otherwise, when
$R_{\mathbf{start}}=R_{\mathbf{end}}<1$ since at the start of
$\gamma_{\mathbf{in}}$, $\frac{5}{6}\pi<\psi_{\mathbf{in}}\leq\pi$
and at the end of $\gamma_{\mathbf{in}}$,
$\psi=\pi-\psi_{\mathbf{in}}$ and it cannot contain a complete loop
of the trajectory, the part of the trajectory is less than the
change of angle going from $M$ to $N$ counterclockwisely on the
trajectory. Therefore, it is less than $\Delta\theta_{MN}$.

If either $R_{\mathbf{start}}>1$ or $R_{\mathbf{end}}>1$, without
loss of generality, assume $R_{\mathbf{start}}<1<R_{\mathbf{end}}$.
We want to compare the change of angle from $M$ to $N$ and the
change of angle from $S$ to $E$. Using lemma
\ref{lem:left_right_ineq}, we have
\begin{equation}
\begin{split}
\Delta\theta_{MS}(c)&=\int_{\psi_{\mathbf{in}}}^{\psi_{\mathbf{max}}}\frac{d\psi}{1-[R^-(c\sin\psi)]^2}\geq\int_{\psi_{\mathbf{in}}}^{\psi_{\mathbf{max}}}\frac{d\psi}{[R^+(c\sin\psi)]^2-1}\\
&=\int^{\pi-\psi_{\mathbf{in}}}_{\psi_{\mathbf{min}}}\frac{d\psi}{[R^+(c\sin\psi)]^2-1}=\Delta\theta_{NE}(c).
\end{split}
\end{equation}
Therefore, $\Delta\theta_{SE}(c)<\Delta\theta_{MN}(c)$. From theorem
\ref{thm:half_upper_bound},
$\Delta\theta_{SE}(c)<\Delta\theta_{MN}(c)<\pi$.
\end{proof}

\begin{lem}\label{noperiod}
It is impossible for $\gamma_{\mathbf{down}}$ to have a complete
period of the trajectory.
\end{lem}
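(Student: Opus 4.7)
The plan is to argue by contradiction, combining the $4$-cell structure of $\gamma_{\mathbf{up}}$ from Section~4, the identity $\theta_{\mathbf{up}}+\theta_{\mathbf{down}}=2\pi$, and the sharp lower bounds of Theorem~\ref{h12h2}. Assume $\gamma_{\mathbf{down}}$ has a complete period; in the same sense as Lemma~\ref{lem:no_period_up} this forces $\theta_{\mathbf{down}}>T(c_{\mathbf{down}})$. Proposition~\ref{prop:energy_comparison} gives $c_{\mathbf{down}}\leq c_{\mathbf{up}}$, and since $T$ is decreasing in $c$ I deduce $\theta_{\mathbf{up}}<2\pi-T(c_{\mathbf{up}})$.

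Combining the $4$-cell description $SA\to BA\to BE$ from Section~4 with the symmetry $\Delta\theta_{SA}=\Delta\theta_{BE}$ of Proposition~\ref{prop:sym_start_end} (obtained from $R_{\mathbf{start}}=R_{\mathbf{end}}$ and $\psi_{\mathbf{start,up}}+\psi_{\mathbf{end,up}}=\pi$) and the identification $\Delta\theta_{BA}=h_1+2h_2$ of the long phase arc $B\to M\to C\to R^-\to D\to N\to A$, one obtains $\theta_{\mathbf{up}}=2\Delta\theta_{SA}(c_{\mathbf{up}})+h_1(c_{\mathbf{up}})+2h_2(c_{\mathbf{up}})$. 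Writing $T(c_{\mathbf{up}})=h_1+2h_2+h_3$ and substituting into the previous inequality produces the key estimate
\[
2\Delta\theta_{SA}(c_{\mathbf{up}})+2\bigl(h_1(c_{\mathbf{up}})+2h_2(c_{\mathbf{up}})\bigr)+h_3(c_{\mathbf{up}})<2\pi.
\]

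When $\eta(c_{\mathbf{up}})\geq 1.38$, Theorem~\ref{h12h2} already yields $h_1+2h_2>\pi$, so $2(h_1+2h_2)>2\pi$ immediately contradicts the displayed inequality. This restricts attention to the narrow residual window $c_{\mathbf{up}}\in[c_*,e^{0.19})$, on which Theorem~\ref{h12h2} supplies only $h_1+2h_2>0.7789\pi$ in general (sharpened to $>0.9456\pi$ when $\eta\geq\tfrac{4}{3}$). In this window the outstanding contradiction reduces to the quantitative lower bound $h_3(c_{\mathbf{up}})>0.4422\pi$ (respectively $>0.1088\pi$).

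The main obstacle is exactly this final quantitative estimate on $h_3$ at small energies. My plan is to mimic the tabulation method used inside the proof of Theorem~\ref{h12h2}: use the explicit integral formula $h_3(c)=\int_{\pi/3}^{2\pi/3}\frac{d\psi}{[R^+(c\sin\psi)]^2-1}$, exploit the monotone decrease of $h_3$ in $c$, and produce a finite list of checkpoints across the short interval $[c_*,e^{0.19})$ that dominate the required threshold. Since $h_3(c_*)$ exceeds $0.4422\pi$ by a comfortable margin and $h_3$ varies slowly on this short interval, the bookkeeping should close; the tightest pinch is near $c_*$, where the $0.7789\pi$ bound on $h_1+2h_2$ is only marginally sharper than the crude $\tfrac{2\pi}{3}$ and the margin on $h_3$ is correspondingly delicate.
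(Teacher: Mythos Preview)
Your overall strategy coincides with the paper's: both arguments derive $\theta_{\mathbf{down}}>T(c_{\mathbf{down}})$, combine with $\theta_{\mathbf{up}}\geq(h_1+2h_2)(c_{\mathbf{up}})$, and split on whether $\eta_{\mathbf{up}}\geq 1.38$ (where Theorem~\ref{h12h2} gives $h_1+2h_2>\pi$ and the Abresch--Langer bound $T>\pi$ finishes). The difference lies in how the residual window $\eta_{\mathbf{up}}<1.38$ is handled. You pass to $T(c_{\mathbf{up}})$, write $T=h_1+2h_2+h_3$, and reduce to an $h_3$ lower bound; the paper instead bounds $T(c_{\mathbf{down}})$ directly. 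Using Lemma~\ref{lower2nd} on the $k<1$ side (with $k_{\mathbf{min}}>0.6$ since $V(0.6)>1.38$) and the quadratic comparison $V(k)\geq 2(k-1)^2+H$ on the $k>1$ side, the paper obtains $T(c_{\mathbf{down}})>0.6123\pi+\tfrac{\pi}{\sqrt{2}}>1.3194\pi$, and then $0.7789\pi+1.3194\pi>2\pi$ closes the argument in one line. This is strictly easier than your route: since $(h_1+2h_2)+T=2(h_1+2h_2)+h_3$, a direct period bound is equivalent to your $h_3$ bound but avoids having to control $h_3$ in isolation.

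Two further remarks. First, you invoke $R_{\mathbf{start}}=R_{\mathbf{end}}$ to get $\Delta\theta_{SA}=\Delta\theta_{BE}$, but at this point only $K(R_{\mathbf{start}})=K(R_{\mathbf{end}})$ is known from Proposition~\ref{prop:sym_start_end}; the cases $R_{\mathbf{start}}<1<R_{\mathbf{end}}$ are not yet excluded (that comes in Theorems~\ref{thm:outout} and~\ref{thm:inout}). This is harmless because you immediately discard the nonnegative term $2\Delta\theta_{SA}$, so simply write $\theta_{\mathbf{up}}\geq(h_1+2h_2)(c_{\mathbf{up}})$ without the symmetry claim. Second, your diagnosis that ``the tightest pinch is near $c_*$'' is backwards for the $h_3$ formulation: $h_3$ is decreasing, so the smallest value on $[c_*,e^{1/6})$ occurs at the right endpoint, not at $c_*$. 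This does not by itself break your plan, but it suggests reorganising the estimate around $T$ rather than $h_3$, which is exactly what the paper does.
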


\begin{proof}
If $\eta_{\mathbf{up}}<1.38$, from proposition
\ref{prop:energy_comparison}, we have
$\eta_{\mathbf{down}}\leq\eta_{\mathbf{up}}<1.38$. Note that
$V(0.6)>1.38$, therefore, $0.6<k_{\mathbf{min}}$ when $\eta<1.38$.
Use lemma \ref{lower2nd}, we have
\begin{equation}
   \int_{k_{\mathbf{min}}}^1\frac{2dk}{\sqrt{\eta-V(k)}}>0.6123\pi.
\end{equation}
On the other hand, the potential for $k>1$ is bounded below by
$2(x-1)^2+H$, where $H$ is chosen such that this parabola pass
through $(k_{\max},V(k_{\max}))$.  We have the lower bound
$\frac{\pi}{\sqrt{2}}$. The period is bounded below by
\begin{equation}
   T(c_{\mathbf{down}})> 0.6123\pi+\frac{\pi}{\sqrt{2}}>1.3194\pi.
\end{equation}
We obtain $\theta_{\mathbf{down}}>T(c_{\mathbf{down}})>1.3194\pi$.
On the other hand, from theorem \ref{h12h2},
$\theta_{\mathbf{up}}>(h_1+2h_2)(c_{\mathbf{up}})>0.7789\pi$.

If $\eta_{\mathbf{up}}\geq1.38$, from theorem \ref{h12h2},
$\theta_{\mathbf{up}}>(h_1+2h_2)(c_{\mathbf{up}})>\pi$. From the
result of Abresch and Langer \cite{AL},
$\theta_{\mathbf{down}}>T(c_{\mathbf{down}})>\pi$. In both case, it
is impossible since
$\theta_{\mathbf{up}}+\theta_{\mathbf{down}}=2\pi$.
\end{proof}

We deal with the case that the bottom cell is a 2-cell first. This
is quite different from the 3-cell, 4-cell, 5-cell cases.
\begin{thm}
It is impossible for the bottom cell to be a 2-cell.
\end{thm}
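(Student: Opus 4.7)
The plan is to argue by contradiction. Suppose the bottom cell is a 2-cell. Then $\gamma_{\mathbf{down}}$ is a single smooth AL-curve from $S$ to $E$ with no interior triple junctions, and $\gamma_{\mathbf{in}}$ is smooth as well. Since the origin sits in the interior of the 2-cell, one has $\theta_{\mathbf{up}} + \theta_{\mathbf{down}} = 2\pi$, while the 4-cell structure of the upper cell from Section 4 gives the decomposition
\[
\theta_{\mathbf{up}} = \Delta\theta_{SA}(c_{\mathbf{up}}) + h_1(c_{\mathbf{up}}) + 2h_2(c_{\mathbf{up}}) + \Delta\theta_{BE}(c_{\mathbf{up}}),
\]
together with the inequalities $c_{\mathbf{in}} > c_{\mathbf{up}} \geq c_{\mathbf{down}}$ and $\psi_{\mathbf{down}} = \tfrac{2\pi}{3} - \psi_{\mathbf{up}} \in [\tfrac{\pi}{3}, \tfrac{\pi}{2})$ from Proposition~\ref{prop:energy_comparison}.

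The natural case split is by whether each of $R_{\mathbf{start}}, R_{\mathbf{end}}$ lies on the $R^-$ or the $R^+$ branch of its common $K$-level (since Proposition~\ref{prop:sym_start_end} gives $K(R_{\mathbf{start}}) = K(R_{\mathbf{end}})$). By Lemma~\ref{noperiod}, $\gamma_{\mathbf{down}}$ cannot traverse a complete period, so in the two ``same-side'' sub-cases it is confined to a sub-arc of $AB$ or $CD$, yielding $\theta_{\mathbf{down}} < h_3(c_{\mathbf{down}})$ or $\theta_{\mathbf{down}} < h_1(c_{\mathbf{down}})$ respectively, each bounded above by $h_1(c_\ast) < \pi$ by Corollary~\ref{cor:h1upper} and Lemma~\ref{lem:left_right_ineq}. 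In the sub-case $R_{\mathbf{start}}, R_{\mathbf{end}} < 1$, Proposition~\ref{inout} applied to $\gamma_{\mathbf{in}}$ forces $\theta_{\mathbf{up}} = \theta_{\mathbf{in}} \leq \pi$, hence $\theta_{\mathbf{down}} \geq \pi$, contradicting the short-arc bound. When both $R_{\mathbf{start}}, R_{\mathbf{end}} > 1$, Proposition~\ref{inout} is unavailable, and one instead compares the 4-cell lower bound $\theta_{\mathbf{up}} > (h_1 + 2h_2)(c_{\mathbf{up}})$ from Theorem~\ref{h12h2} with the short-arc upper bound $\theta_{\mathbf{down}} < h_3(c_{\mathbf{down}})$, deriving a contradiction with $\theta_{\mathbf{up}} + \theta_{\mathbf{down}} = 2\pi$.

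For the two ``crossing'' sub-cases $R_{\mathbf{start}} < 1 < R_{\mathbf{end}}$ (or vice versa), $\gamma_{\mathbf{down}}$ must pass through $M$ or $N$, and $\theta_{\mathbf{down}}$ is written as a sum of one integral on the $R > 1$ branch with integrand $1/((R^+)^2 - 1)$ and one on the $R < 1$ branch with integrand $1/(1-(R^-)^2)$. Lemma~\ref{lem:left_right_ineq} together with the constraint $\psi_{\mathbf{down}} \geq \pi/3$ gives an explicit upper bound on this sum, and combining it with $T(c) < \sqrt{2}\pi$ and the sharp 4-cell lower bound on $\theta_{\mathbf{up}}$ from Theorem~\ref{h12h2} again violates $\theta_{\mathbf{up}} + \theta_{\mathbf{down}} = 2\pi$.

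The main obstacle is the crossing sub-cases in the low-energy regime $\eta_{\mathbf{up}} \in [\eta_\ast, 1.38)$, where the bound $h_1 + 2h_2 > 0.7789\pi$ of Theorem~\ref{h12h2} is not sharp enough on its own; here the proof likely requires a finer numerical bookkeeping, in the spirit of the intervalwise estimates in the proof of Theorem~\ref{h12h2}, to tie the pointwise integrand bounds on $\theta_{\mathbf{down}}$ to the explicit correction terms $\Delta\theta_{SA} + \Delta\theta_{BE}$ in the 4-cell decomposition of $\theta_{\mathbf{up}}$.
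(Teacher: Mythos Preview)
Your case split matches the paper's, but the analysis of the sub-case $R_{\mathbf{start}}=R_{\mathbf{end}}<1$ contains a directional error that breaks the argument. You assert that $\gamma_{\mathbf{down}}$ is confined to a sub-arc of $CD$, giving $\theta_{\mathbf{down}}<h_1(c_{\mathbf{down}})$. Trace the flow on the trajectory: with $\psi_{\mathbf{down}}\in[\tfrac{\pi}{3},\tfrac{\pi}{2})$ and $R_{\mathbf{start}}<1$, the point $S$ lies on the portion of $CD$ between $(R^-(c_{\mathbf{down}}),\tfrac{\pi}{2})$ and $D$, while $E$ (at $\psi=\pi-\psi_{\mathbf{down}}$) lies between $C$ and $(R^-(c_{\mathbf{down}}),\tfrac{\pi}{2})$. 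Since $d\psi/ds=(R-1/R)\sin\psi<0$ on the $R<1$ side, the forward direction from $S$ goes through $D$, $N$, $A$, $B$, $M$, $C$ before reaching $E$; hence $\theta_{\mathbf{down}}=T(c_{\mathbf{down}})-\Delta\theta_{ES}$, nearly a full period, not a short arc. This is in fact the hard sub-case: the paper handles it via the estimate $\theta_{\mathbf{down}}+\theta_{\mathbf{in}}\leq T(c_{\mathbf{down}})-\int_{\psi_{\mathbf{down}}}^{\pi-\psi_{\mathbf{down}}}\frac{d\psi}{1-[R^-(c_{\mathbf{down}}\sin\psi)]^2}+\pi$, together with a numerical bound on $\psi_{\mathbf{down}}$ when $\eta_{\mathbf{up}}<1.38$.

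The other two sub-cases also need repair. In the crossing case $R_{\mathbf{start}}<1<R_{\mathbf{end}}$, Proposition~\ref{inout} \emph{does} apply (it only requires one of the two radii to be below $1$), so $\theta_{\mathbf{in}}\leq\pi$, and with $\theta_{\mathbf{down}}\leq\Delta\theta_{MN}(c_{\mathbf{down}})<\pi$ this case is clean---it is not the obstacle you flag. In the case $R_{\mathbf{start}}=R_{\mathbf{end}}>1$, combining the \emph{lower} bound $\theta_{\mathbf{up}}>(h_1+2h_2)(c_{\mathbf{up}})$ with the \emph{upper} bound $\theta_{\mathbf{down}}<h_3(c_{\mathbf{down}})$ gives bounds from opposite sides and cannot by itself contradict $\theta_{\mathbf{up}}+\theta_{\mathbf{down}}=2\pi$; the paper instead bounds $\theta_{\mathbf{in}}+\theta_{\mathbf{down}}$ from above by $\Delta\theta_{MN}(c_{\mathbf{in}})+\tfrac12 T(c_{\mathbf{down}})<\pi+\tfrac{\pi}{\sqrt2}<2\pi$, using monotonicity in $c$ to transfer pieces of $\theta_{\mathbf{in}}$ over to $\theta_{\mathbf{down}}$. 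Finally, note that for a 2-cell one may have $c_{\mathbf{down}}<c_*$, so $h_1,h_3$ need not be defined at $c_{\mathbf{down}}$ at all.
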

\begin{proof}
In this case, the trajectory for $\gamma_{\mathbf{down}}$ in the
phase plane may not touch $\psi=\frac{\pi}{3}$ and
$\psi=\frac{2\pi}{3}$. Therefore, the point A, B, C, D may be
undefined on the trajectory and the method of expressing angles in
terms of $h_1$, $h_2$, and $h_3$ may not be applicable. For this
case, we only use the point $M$, $N$. Since there are no triple
junctions on $\gamma_{\mathbf{down}}$, we have
$\theta_{\mathbf{down}}=\Delta\theta_{SE}$. We separate into 3
cases.

When $R_{\mathbf{start}}=R_{\mathbf{end}}<1$, if
$\eta_{\mathbf{up}}\geq 1.38$, using theorem \ref{h12h2} and
proposition \ref{inout}, we have
$\pi\geq\theta_{\mathbf{in}}=\theta_{\mathbf{up}}>(h_1+2h_2)(c_{\mathbf{up}})>\pi$.
It is impossible. If  $\eta_{\mathbf{up}}<1.38$, we have
$c_{\mathbf{up}}=e^{\frac{\eta_{\mathbf{up}}-1}{2}}<e^{0.19}$. Since
$\sin(\psi_{\mathbf{up}})=\frac{K(R_{\mathbf{start}})}{c_{\mathbf{up}}}\geq
e^{-0.19}$, we have $\psi_{\mathbf{up}}\geq 0.3099\pi$ and
$\psi_{\mathbf{down}}=\frac{2\pi}{3}-\psi_{\mathbf{up}}\leq
0.3568\pi$,
\begin{equation}
\begin{split}
\theta_{\mathbf{down}}+\theta_{\mathbf{in}}&\leq T(c_{\mathbf{down}})-\int_{\psi_{\mathbf{down}}}^{\pi-\psi_{\mathbf{down}}}\frac{1}{1-R^-(c_{\mathbf{down}}\sin\psi)^2}d\psi+\pi\\
&\leq\sqrt{2}\pi-\frac{\pi-2\psi_{\mathbf{down}}}{1-R^-(c_{\mathbf{down}})^2}+\pi\leq\sqrt{2}\pi-\frac{\pi-2\times 0.3568\pi}{1-0.6^2}+\pi<2\pi,
\end{split}
\end{equation}
the last inequality comes from $R^-(c_{\mathbf{down}})\geq
R^-(c_{\mathbf{up}})\geq R^-(e^{0.19})>0.6$. Therefore, there does
not exist a bottom 2-cell in this case.

When $R_{\mathbf{start}}<1<R_{\mathbf{end}}$, using the symmetry of
the trajectory with respect to $\psi=\frac{\pi}{2}$ and lemma
\ref{lem:left_right_ineq}, we have
\begin{equation}
\begin{split}
    \theta_{\mathbf{down}}&=\Delta\theta_{SE}(c_{\mathbf{down}})=(\Delta\theta_{SN}+\Delta\theta_{NE})(c_{\mathbf{down}})\\
    &\leq(\Delta\theta_{SN}+\Delta\theta_{MS})(c_{\mathbf{down}})=\Delta\theta_{MN}(c_{\mathbf{down}})<\pi,
\end{split}
\end{equation}
where the last inequality is given by proposition
\ref{thm:half_upper_bound}. Combine $\theta_{\mathbf{in}}<\pi$ from
proposition \ref{inout}. This contradicts
$\theta_{\mathbf{down}}+\theta_{\mathbf{in}}=2\pi$.

When $R_{\mathbf{start}}=R_{\mathbf{end}}\geq1$, use the equation
(\ref{eq:int_R_for_theta}) and the monotonicity with respect to $c$
for fixed range of $R$, from $c_{\mathbf{down}}<c_{\mathbf{in}}$, we
have
$\Delta\theta_{SM}(c_{\mathbf{in}})=\Delta\theta_{NE}(c_{\mathbf{in}})\leq\Delta\theta_{EM}(c_{\mathbf{down}})=\Delta\theta_{NS}(c_{\mathbf{down}})$.
Note that if $R_{\mathbf{start}}=R_{\mathbf{end}}=1$,
$\Delta\theta_{SM}(c_{\mathbf{in}})=\Delta\theta_{NE}(c_{\mathbf{in}})=\Delta\theta_{EM}(c_{\mathbf{down}})=\Delta\theta_{NS}(c_{\mathbf{down}})=0$.
Therefore,
\begin{equation}
\begin{split}
\theta_{\mathbf{in}}+\theta_{\mathbf{down}}&=(\Delta\theta_{SM}+\Delta\theta_{MN}+\Delta\theta_{NE})(c_{\mathbf{in}})+\Delta\theta_{SE}(c_\mathbf{down})\\
&\leq\Delta\theta_{MN}(c_{\mathbf{in}})+(\Delta\theta_{NS}+\Delta\theta_{SE}+\Delta\theta_{EM})(c_{\mathbf{down}})\\
&\leq\Delta\theta_{MN}(c_{\mathbf{in}})+\frac{1}{2}T(c_{\mathbf{down}})\leq \pi+\frac{\pi}{\sqrt2}<2\pi.
\end{split}
\end{equation}
Note that we use $\Delta\theta_{MN}(c)>\Delta\theta_{NM}(c)$ from
lemma \ref{lem:left_right_ineq} and
$\Delta\theta_{MN}(c)+\Delta\theta_{NM}(c)=T(c)$.
\end{proof}

If the bottom cell is a 3-cell, a 4-cell or a 5-cell,
$c_{\mathbf{down}}\geq c_*$. We can describe the change of angle in
terms of $h_1$, $h_2$, and $h_3$. Here we list all the possible
cases. The arrow indicates a triple junction with a ray. The point
on the phase plane will either jump from $D$ to $C$ or jump from $A$
to $B$.
\begin{center}
\begin{tabular}{|c|l|l|l|l|}
    \hline
    Cell & Path on the trajectory & $R_{\mathbf{start}}=R_{\mathbf{end}}<1$ & $R_{\mathbf{start}}<1<R_{\mathbf{end}}$ & $R_{\mathbf{start}}=R_{\mathbf{end}}>1$\\
  \hline
  5-cell & SD$\rightarrow$CD$\rightarrow$CD$\rightarrow$CE & $2h^\circ_1+2h_1$ & $h^\circ_1+3h_1+h_2+h^\circ_3$ & $4h_1+2h_2+2h^\circ_3$\\ \cline{2-5}
   & SA$\rightarrow$BD$\rightarrow$CD$\rightarrow$CE & & & \\
   & SD$\rightarrow$CA$\rightarrow$BD$\rightarrow$CE & & & \\
   & SD$\rightarrow$CD$\rightarrow$CA$\rightarrow$BE &  $2h^\circ_1+2h_1+2h_2$ & $h^\circ_1+3h_1+3h_2+h^\circ_3$ & $4h_1+4h_2+2h^\circ_3$\\ \cline{2-5}
   & SA$\rightarrow$BA$\rightarrow$BD$\rightarrow$CE & & &\\
   & SA$\rightarrow$BD$\rightarrow$CA$\rightarrow$BE & & &\\
   & SD$\rightarrow$CA$\rightarrow$BA$\rightarrow$BE & $2h^\circ_1+2h_1+4h_2$ & $h^\circ_1+3h_1+5h_2+h^\circ_3$ & $4h_1+6h_2+2h^\circ_3$\\ \cline{2-5}
   & SA$\rightarrow$BA$\rightarrow$BA$\rightarrow$BE & $2h^\circ_1+2h_1+6h_2$ & $h^\circ_1+3h_1+7h_2+h^\circ_3$ & $4h_1+8h_2+2h^\circ_3$\\
  \hline
  4-cell & SD$\rightarrow$CD$\rightarrow$CE & $2h^\circ_1+h_1$ &  $h^\circ_1+2h_1+h_2+h^\circ_3$ & $3h_1+2h_2+2h^\circ_3$\\ \cline{2-5}
   & SA$\rightarrow$BD$\rightarrow$CE & & &\\
   & SD$\rightarrow$CA$\rightarrow$BE & $2h^\circ_1+h_1+2h_2$ &  $h^\circ_1+2h_1+3h_2+h^\circ_3$ & $3h_1+4h_2+2h^\circ_3$\\ \cline{2-5}
   & SA$\rightarrow$BA$\rightarrow$BE & $2h^\circ_1+h_1+4h_2$ &
   $h^\circ_1+2h_1+5h_2+h^\circ_3$ & $3h_1+6h_2+2h^\circ_3$\\
  \hline
  3-cell & SD$\rightarrow$CE & $2h^\circ_1$ & $h^\circ_1+h_1+h_2+h^\circ_3$ & $2h_1+2h_2+2h^\circ_3$\\ \cline{2-5}
   & SA$\rightarrow$BE & $2h_1^\circ+2h_2$ & $h^\circ_1+h_1+3h_2+h^\circ_3$ & $2h_1+4h_2+2h^\circ_3$\\
  \hline
\end{tabular}
\end{center}

\begin{figure}[ht]
\includegraphics[height=4.5cm]{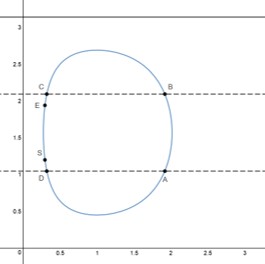}
\includegraphics[height=4.5cm]{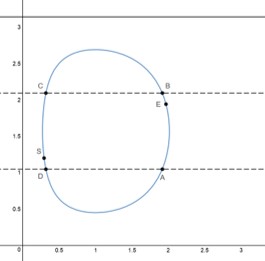}
\includegraphics[height=4.5cm]{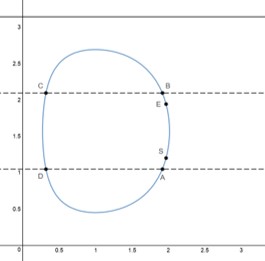}
\caption{Possible places for S and E}
\end{figure}

Since $\frac{\pi}{3}\leq\psi_{\mathbf{down}}<\frac{\pi}{2}$, $S$,
$E$ lie on either $CD$ or $AB$ arc of the trajectory corresponds to
$c_{\mathbf{down}}$. Use
$h_1^\circ=\Delta\theta_{SD}=\Delta\theta_{CE}$ when
$S(c_{\mathbf{down}})$ or $E(c_{\mathbf{down}})$ lie on the $CD$
arc. $h_3^\circ=\Delta\theta_{SB}=\Delta\theta_{AE}$ when
$S(c_{\mathbf{down}})$ or $E(c_{\mathbf{down}})$ lie on the $AB$
arc. Note that we can eliminate the case
$R_{\mathbf{start}}=R_{\mathbf{end}}=1$, we have
$c_{\mathbf{down}}=c_*$, $A=D=S$ and $B=C=E$. since when we goes
from the starting point to the triple junction, we either form a
complete loop or the curve will be degenerate.

\begin{thm}\label{thm:outout}
For the case the bottom cell is either a 3-cell, 4-cell or 5-cell,
it is impossible than $R_{\mathbf{start}}=R_{\mathbf{end}}>1$.
\end{thm}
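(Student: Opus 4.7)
The strategy is to show that in every configuration listed in the $R_{\mathbf{start}}=R_{\mathbf{end}}>1$ column of the path table (all nine rows spanning the $3$-, $4$-, and $5$-cell cases), the total change of angle on $\gamma_{\mathbf{down}}$ strictly exceeds one full period $T(c_{\mathbf{down}})$. Lemma \ref{noperiod} then rules this out immediately.

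First I would read off a uniform lower bound directly from the table. In every relevant row, the initial sub-arc from $S$ to the first triple-junction jump contributes $h_3^\circ$, the final sub-arc from the last triple-junction jump to $E$ contributes another $h_3^\circ$, and between them the phase-plane point traverses each of $BC$ and $DA$ (length $h_2$) at least once and the arc $CD$ (length $h_1$) at least twice. Therefore, uniformly across all nine rows,
\begin{equation*}
\theta_{\mathbf{down}}\;\geq\;2h_1(c_{\mathbf{down}})+2h_2(c_{\mathbf{down}})+2h_3^\circ(c_{\mathbf{down}}),
\end{equation*}
the minimum being attained by the $3$-cell path $SD\to CE$.

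Combining this with Lemma \ref{lem:left_right_ineq}, which supplies the strict asymmetry $h_1(c)>h_3(c)$ for $c\geq c_*$, and with $h_3^\circ\geq 0$, I would then obtain
\begin{equation*}
\theta_{\mathbf{down}}\;\geq\;2h_1+2h_2+2h_3^\circ\;>\;h_1+2h_2+h_3\;=\;T(c_{\mathbf{down}}).
\end{equation*}
So $\gamma_{\mathbf{down}}$ completes strictly more than a full period of its phase-plane trajectory, contradicting Lemma \ref{noperiod}.

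The only genuine labor lies in the case check: verifying row by row that every entry of the $R_{\mathbf{start}}=R_{\mathbf{end}}>1$ column of the table indeed majorizes $2h_1+2h_2+2h_3^\circ$. This is bookkeeping rather than analysis. The two analytic ingredients --- the strict inequality $h_1>h_3$ from Lemma \ref{lem:left_right_ineq} and the impossibility of a complete period for $\gamma_{\mathbf{down}}$ from Lemma \ref{noperiod} --- are already in place, so once the table check is done the proof reduces to the one-line comparison above.
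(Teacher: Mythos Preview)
Your lower bound $\theta_{\mathbf{down}}\geq 2h_1+2h_2+2h_3^\circ$ is exactly the one the paper extracts from the table, and your use of $h_1>h_3$ to push this past $T(c_{\mathbf{down}})$ is fine. The gap is in the last line: the inequality $\theta_{\mathbf{down}}>T(c_{\mathbf{down}})$ does \emph{not} imply that $\gamma_{\mathbf{down}}$ contains a complete period of its trajectory. The phase-plane path of $\gamma_{\mathbf{down}}$ has jumps $A\to B$ or $D\to C$ at each triple junction, so the total $\theta$ accumulated along the piecewise path can exceed $T$ without any single continuous sub-arc wrapping once around the closed trajectory. Lemma~\ref{noperiod}, as stated, forbids a complete period; it does not literally say $\theta_{\mathbf{down}}\leq T$. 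So you are invoking the lemma in the wrong direction.

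The fix is small. The \emph{proof} of Lemma~\ref{noperiod} actually shows the stronger fact that $\theta_{\mathbf{down}}>T(c_{\mathbf{down}})$ together with $\theta_{\mathbf{up}}\geq (h_1+2h_2)(c_{\mathbf{up}})$ forces $\theta_{\mathbf{up}}+\theta_{\mathbf{down}}>2\pi$, so you could cite that argument rather than the lemma's statement. Cleaner still is what the paper does: instead of dropping $2h_3^\circ$ to $0$, use the overlap $\Delta\theta_{SB}+\Delta\theta_{AE}>\Delta\theta_{AB}$ to get $2h_3^\circ>h_3$, hence
\[
\theta_{\mathbf{down}}\;>\;2h_1+2h_2+h_3\;=\;h_1+T(c_{\mathbf{down}})\;>\;\tfrac{\pi}{3}+\pi\;=\;\tfrac{4\pi}{3},
\]
and then combine directly with $\theta_{\mathbf{up}}>(h_1+2h_2)(c_{\mathbf{up}})>\tfrac{2\pi}{3}$ from Theorem~\ref{h12h2} to contradict $\theta_{\mathbf{up}}+\theta_{\mathbf{down}}=2\pi$. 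This avoids Lemma~\ref{noperiod} altogether and is one line once the table bound is in hand.
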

\begin{proof}
In this case,
\begin{equation}
\begin{split}
    2h_3^\circ(c_{\mathbf{down}})&=(\Delta\theta_{SB}+\Delta\theta_{AE})(c_{\mathbf{down}})>\Delta\theta_{AB}(c_{\mathbf{down}})=h_3(c_{\mathbf{down}}).\\
\end{split}
\end{equation}
We have
\begin{equation}
\theta_{\mathbf{down}}\geq(2h_1+2h_2+2h^\circ_3)(c_{\mathbf{down}})>(2h_1+2h_2+h_3)(c_{\mathbf{down}})=(h_1+T)(c_{\mathbf{down}})>\frac{4\pi}{3}.
\end{equation}
This is impossible since $\theta_{\mathbf{up}}>\frac{2\pi}{3}$ and
$\theta_{\mathbf{down}}+\theta_{\mathbf{up}}=2\pi$.
\end{proof}

Therefore, for a regular shrinker, we have either
$R_{\mathbf{start}}<1$ or $R_{\mathbf{end}}<1$.

\begin{prop}\label{prop:range_for_gamma}
If $R_{\mathbf{start}}<1$ or $R_{\mathbf{end}}<1$, for a
$\Theta$-shaped regular shrinker, we have
$\theta_{\mathbf{up}}>(h_1+2h_2+2\Delta
\theta_{NA})(c_{\mathbf{up}})$. Moreover, for $c\geq\bar c=
e^{\frac{1.3065-1}{2}}$, $(h_1+2h_2+2\Delta \theta_{NA})(c)>\pi$.
Therefore, we have $c_{\mathbf{up}}\in I_A=(c_*,\bar c)$.  In this
case, we have $\theta_{\mathbf{up}}=\theta_{\mathbf{in}}\in
(0.9947\pi, \pi]$.
\end{prop}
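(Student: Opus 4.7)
The plan is to bound $\theta_{\mathbf{up}}$ from below by $h_1+2h_2+2\Delta\theta_{NA}$, to show this model quantity already exceeds $\pi$ for $c\ge\bar c$, and then to combine with the upper bound $\theta_{\mathbf{up}}\le\pi$ from Proposition \ref{inout}. The starting observation is that the path $SA\to BA\to BE$ on the phase plane, established for the upper $4$-cell in Section 4, yields
\[
\theta_{\mathbf{up}}=\Delta\theta_{SA}+(h_1+2h_2)(c_{\mathbf{up}})+\Delta\theta_{BE},
\]
since the counterclockwise arc from $B$ back to $A$ sweeps $\Delta\theta_{BC}+\Delta\theta_{CD}+\Delta\theta_{DA}=h_2+h_1+h_2$. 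So it suffices to prove $\Delta\theta_{SA}+\Delta\theta_{BE}>2\Delta\theta_{NA}(c_{\mathbf{up}})$.

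Setting
\[
I_L=\int_{\psi_{\min}}^{\psi_{\mathbf{up}}}\frac{d\psi}{1-[R^-(c_{\mathbf{up}}\sin\psi)]^2},\qquad I_R=\int_{\psi_{\min}}^{\psi_{\mathbf{up}}}\frac{d\psi}{[R^+(c_{\mathbf{up}}\sin\psi)]^2-1},
\]
I would split into the three configurations of $(R_{\mathbf{start}},R_{\mathbf{end}})$ compatible with the hypothesis. When both $R_{\mathbf{start}},R_{\mathbf{end}}<1$, the symmetry $\psi\mapsto\pi-\psi$ of $K(R)=c\sin\psi$ gives $\Delta\theta_{SA}=I_L+\Delta\theta_{NA}$ and $\Delta\theta_{BE}=\Delta\theta_{NA}+I_L$, and $I_L>0$ because $\psi_{\mathbf{up}}>\psi_{\min}$. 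When exactly one of $R_{\mathbf{start}},R_{\mathbf{end}}$ exceeds $1$, the same decomposition gives $\Delta\theta_{SA}+\Delta\theta_{BE}=2\Delta\theta_{NA}+(I_L-I_R)$, and Lemma \ref{lem:left_right_ineq} provides $I_L>I_R$. The case $R_{\mathbf{start}}=R_{\mathbf{end}}=1$ is excluded by hypothesis.

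For the numerical inequality $(h_1+2h_2+2\Delta\theta_{NA})(c)>\pi$ on $c\ge\bar c$, the range $\eta(c)\ge 1.38$ is immediate from Theorem \ref{h12h2} with the non-negative term $2\Delta\theta_{NA}$ discarded. On the narrow strip $\eta\in[1.3065,1.38)$ I would revisit the $k$-integral representations of $h_1+2h_2$ and of $\Delta\theta_{NA}$, apply the parabolic minorants of $V$ from Lemma \ref{lower2nd} on the relevant sub-intervals, and tabulate the resulting lower bounds on a subdivision of $[1.3065,1.38)$ fine enough to push the sum past $\pi$ uniformly, in the spirit of the table in the proof of Theorem \ref{h12h2}. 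Combined with $\theta_{\mathbf{up}}=\theta_{\mathbf{in}}\le\pi$ from Proposition \ref{inout}, this pins $c_{\mathbf{up}}\in I_A=(c_*,\bar c)$. For the lower bound $\theta_{\mathbf{up}}>0.9947\pi$, note that $\Delta\theta_{NA}(c_*)=0$ since $N=A$ at $c_*$, so the strict inequality from the case analysis reduces the claim to the numerical estimate $h_1(c_*)+2h_2(c_*)\ge 0.9947\pi$, obtained by applying Lemma \ref{lower2nd} at $\eta=\eta_*=1+\log\tfrac{4}{3}$. The principal obstacle is exactly this numerical work on $[\eta_*,1.38)$: Theorem \ref{h12h2} by itself yields only $h_1+2h_2>0.9456\pi$ on $[\tfrac{4}{3},1.38)$ and $>0.7789\pi$ on $[\eta_*,\tfrac{4}{3})$, so harnessing the added $\Delta\theta_{NA}$ contribution on a sufficiently fine subdivision is essential to close the gap to $\pi$ uniformly.
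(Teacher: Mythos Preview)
Your treatment of the first three claims is essentially the paper's proof: the decomposition $\theta_{\mathbf{up}}=(h_1+2h_2)+\Delta\theta_{SA}+\Delta\theta_{BE}$, the case split on the signs of $R_{\mathbf{start}}-1$, $R_{\mathbf{end}}-1$, the use of Lemma~\ref{lem:left_right_ineq} to compare $I_L$ and $I_R$, and the subdivision of $[1.3065,1.38)$ with Lemma~\ref{lower2nd} estimates are exactly what the paper does (it uses the three subintervals $[\bar\eta,1.31)$, $[1.31,\tfrac43)$, $[\tfrac43,1.38)$).

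The gap is in your argument for $\theta_{\mathbf{up}}>0.9947\pi$. Your plan reduces to the numerical claim $h_1(c_*)+2h_2(c_*)\ge 0.9947\pi$; since $h_2(c_*)=0$ this is $h_1(c_*)\ge 0.9947\pi$, and that inequality is \emph{false}. Indeed, the upper bound from the proof of Proposition~\ref{thm:half_upper_bound} at $\psi_0=\tfrac{\pi}{3}$ gives
\[
h_1(c_*)=\Delta\theta_{MN}(c_*)=\Delta\phi+\tfrac{\pi}{3}<\sqrt{2}\,\cos^{-1}\!\Bigl(\tfrac{1-\sqrt{3}/2}{\,1-\sqrt{3}/(2\sqrt{2})\,}\Bigr)+\tfrac{\pi}{3}<0.89\pi,
\]
so Lemma~\ref{lower2nd} cannot possibly deliver $0.9947\pi$ here. (You also do not justify why the infimum of $(h_1+2h_2+2\Delta\theta_{NA})$ over $I_A$ would occur at $c_*$; the monotonicity results needed for that are Lemmas~\ref{lem:increase1}--\ref{lem:increaseh12h2}, which come only in Section~6.)

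The paper obtains the $0.9947\pi$ bound by an entirely different route that does not pass through the model quantity at all. From $c_{\mathbf{down}}\ge c_*$ (the bottom cell is at least a $3$-cell at this point) and $c_{\mathbf{up}}<\bar c$ one gets
\[
\frac{\sin(\psi_{\mathbf{up}}+\tfrac{\pi}{3})}{\sin\psi_{\mathbf{up}}}=\frac{\sin\psi_{\mathbf{down}}}{\sin\psi_{\mathbf{up}}}=\frac{c_{\mathbf{up}}}{c_{\mathbf{down}}}\le\frac{\bar c}{c_*},
\]
and since $x\mapsto\sin(x+\tfrac{\pi}{3})/\sin x$ is decreasing on $(0,\tfrac{\pi}{3}]$, this forces $\psi_{\mathbf{up}}>0.3307\pi$. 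Then one works with $\gamma_{\mathbf{in}}$ rather than $\gamma_{\mathbf{up}}$: since $\Delta\phi\ge 0$ along $\gamma_{\mathbf{in}}$ and $\psi_{\mathbf{in}}=\psi_{\mathbf{up}}+\tfrac{2\pi}{3}$,
\[
\theta_{\mathbf{up}}=\theta_{\mathbf{in}}=\Delta\phi-\Delta\psi\ge 2\psi_{\mathbf{in}}-\pi=\tfrac{\pi}{3}+2\psi_{\mathbf{up}}>0.9947\pi.
\]
This is the missing idea you need for the final inequality.
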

\begin{proof}
Without loss of generality, assume $R_{\mathbf{start}}<1$. Recall
that
\begin{equation}
\theta_{\mathbf{up}}
=(\Delta\theta_{SA}+\Delta\theta_{BA}+\Delta\theta_{BE})(c_{\mathbf{up}})
=(h_1+2h_2)(c_{\mathbf{up}})+\Delta\theta_{SA}(c_{\mathbf{up}})+\Delta\theta_{BE}(c_{\mathbf{up}}).
\end{equation}
If $R_{\mathbf{end}}<1$, we have
\begin{equation}
\Delta\theta_{SA}(c_{\mathbf{up}})+\Delta\theta_{BE}(c_{\mathbf{up}})\geq\Delta\theta_{NA}(c_{\mathbf{up}})+\Delta\theta_{BM}(c_{\mathbf{up}})=2\Delta\theta_{NA}(c_{\mathbf{up}}).
\end{equation}
If $R_{\mathbf{end}}>1$, we have
\begin{equation}
\Delta\theta_{SA}(c_{\mathbf{up}})+\Delta\theta_{BE}(c_{\mathbf{up}})=\Delta\theta_{SN}(c_{\mathbf{up}})+\Delta\theta_{NA}(c_{\mathbf{up}})+\Delta\theta_{BM}(c_{\mathbf{up}})
-\Delta\theta_{EM}(c_{\mathbf{up}}).
\end{equation}
We want to compare $\Delta\theta_{SN}$ and $\Delta\theta_{EM}$.
Using the symmetry of the trajectory with respect to
$\psi=\frac{\pi}{2}$ and lemma \ref{lem:left_right_ineq},
$\Delta\theta_{SN}(c_{\mathbf{up}})\geq\Delta\theta_{EM}(c_{\mathbf{up}})$.
Therefore, we also have
$\Delta\theta_{SA}(c_{\mathbf{up}})+\Delta\theta_{BE}(c_{\mathbf{up}})\geq
2\theta_{NA}(c_{\mathbf{up}})$ and $\theta_{\mathbf{up}}\geq
h_1(c_{\mathbf{up}})+2h_2(c_{\mathbf{up}})+2\theta_{NA}(c_{\mathbf{up}})$.
From proposition \ref{inout}, we have
$(h_1+2h_2+2\theta_{NA})(c_{\mathbf{up}})\leq\theta_{\mathbf{up}}=\theta_{\mathbf{in}}\leq\pi$.


For $c\geq e^{0.19}$, we have $\eta\geq 1.38$, by using theorem
\ref{h12h2},  $h_1+2h_2+2\theta_{NA}\geq (h_1+2h_2)>\pi$.


For  $e^{\frac{1}{6}}\leq c<e^{0.19} $, we have
$\frac{4}{3}\leq\eta< 1.38$ and $k_2\geq 1$.  Recall that
\begin{equation}\label{ineq:h2+}
\begin{split}
\Delta\theta_{NA}(c)=\Delta\phi_{NA}(c)+\Delta\psi_{NA}(c)
=\int_{k_1}^{k_2}\frac{dk}{\sqrt{\eta-V(k)}}+\left(\sin^{-1}(\frac{1}{c})-\frac{\pi}{3}\right) ,
\end{split}
\end{equation}
where $V(k)=k^2-2\log k$, $k_1=\frac{1}{c}$,
$k_2=\frac{\sqrt{3}}{2}R^+(\frac{\sqrt{3}}{2}c)$ is the curvature at
$A$, and $\eta=1+2\log c$. Using  lemma \ref{lower2nd} with
$k_0=\frac{1}{c}$ and $H=V(\frac{1}{c})-(1+c)(\frac{1}{c}-1)^2$,  we
have $\frac{\eta-H}{1+k_0}=\frac{c-1}{c}$ and
\begin{equation}
\begin{split}
\Delta\theta_{NA}(c)
\geq \int_{k_1}^{1}\frac{dk}{\sqrt{\eta-V(k)}}+\left(\sin^{-1}(\frac{1}{c})-\frac{\pi}{3}\right)
\geq\frac{1}{\sqrt{1+c}}\sin^{-1}(\sqrt{\frac{c-1}{c}}))
+\left(\sin^{-1}\left(\frac{1}{c}\right)-\frac{\pi}{3}\right). \notag
\end{split}
\end{equation}
When  $c$ increases, $\frac{c-1}{c}$ increases. Since $k_2\geq 1$
and $e^{\frac{1}{6}}\leq c<e^{0.19} $, using theorem \ref{h12h2}, we
have
\begin{equation}
\begin{split}
2\Delta\theta_{NA}(c)\geq&\frac{2}{\sqrt{1+e^{0.19}}}\sin^{-1}\sqrt{\frac{e^{\frac{1}{6}}-1}{e^{\frac{1}{6}}}}
+2\left(\sin^{-1}(\frac{1}{e^{0.19}})-\frac{\pi}{3}\right)\geq0.1252\pi, \notag
\end{split}
\end{equation}
and $    (h_1+2h_2+2\Delta\theta_{NA})>0.9456\pi+0.1252\pi>\pi$ for
$e^{\frac{1}{6}}\leq c<e^{0.19} $.



For $ e^{\frac{1.31-1}{2}}\leq c< e^{\frac{1}{6}}$,  we have
$0.6235\leq k_{\mathbf{min}}=R^-(c)\leq 0.6358$ and $0.9590\leq
k_2=\frac{\sqrt{3}}{2}R^+(\frac{\sqrt{3}}{2} c)\leq 1$. Using Lemma
\ref{lower2nd} and the inequality (\ref{ineq:h2+}),
\begin{align}
(h_1+2h_2)(c)>&\frac{2}{\sqrt{1+\frac{1}{k_{\mathbf{min}}}}} \left(\frac{\pi}{2}-\sin^{-1}(\frac{1-k_2}{1-k_{\mathbf{min}}})\right)+\frac{\pi}{3} \notag  \\
\geq &\frac{2}{\sqrt{1+\frac{1}{0.6235}}} \left(\frac{\pi}{2}-\sin^{-1}(\frac{1-0.9590}{1-0.6358})\right)+\frac{\pi}{3}>0.9084\pi,  \notag
\end{align}
and
\begin{equation}\notag
\begin{split}
2\Delta\theta_{NA}(c)\geq&\frac{2}{\sqrt{1+e^{\frac{1}{6}}}}\Big[\sin^{-1}\sqrt{\frac{e^{\frac{1.31-1}{2}}-1}{e^{\frac{1.31-1}{2}}}}-\sin^{-1}\left(\sqrt{\frac{e^{\frac{1.31-1}{2}}}{e^{\frac{1.31-1}{2}}-1}}(1-0.9590)\right)\Big]\\
&+2\left(\sin^{-1}(\frac{1}{e^{\frac{1}{6}}})-\frac{\pi}{3}\right)\geq0.0966\pi.
\end{split}
\end{equation}
Therefore, $
(h_1+2h_2+2\Delta\theta_{NA})(c)>0.9084\pi+0.0966\pi>\pi $ for $
e^{\frac{1.31-1}{2}}\leq c< e^{\frac{1}{6}}$.

For $ e^{\frac{1.3065-1}{2}}=\bar c\leq c< e^{\frac{1.31-1}{2}}$, we
have $0.6356\leq k_{\mathbf{min}}=R^-(c)\leq 0.6377$ and $0.9513\leq
k_2=\frac{\sqrt{3}}{2}R^+(\frac{\sqrt{3}}{2} c)\leq 0.9591$. Using
lemma \ref{lower2nd} and the inequality (\ref{ineq:h2+}), we have
\begin{align}
(h_1+2h_2)(c)>&\frac{2}{\sqrt{1+\frac{1}{k_{\mathbf{min}}}}} \left(\frac{\pi}{2}-\sin^{-1}(\frac{1-k_2}{1-k_{\mathbf{min}}})\right)+\frac{\pi}{3} \notag  \\
\geq &\frac{2}{\sqrt{1+\frac{1}{0.6356}}} \left(\frac{\pi}{2}-\sin^{-1}(\frac{1-0.9513}{1-0.6377})\right)+\frac{\pi}{3}>0.9031\pi,  \notag
\end{align}
and
\begin{equation}
\begin{split}
2\Delta\theta_{NA}(c)\geq&\frac{2}{\sqrt{1+e^{\frac{1.31-1}{2}}}}\Big[\sin^{-1}\sqrt{\frac{\bar c-1}{\bar c}} -\sin^{-1}\sqrt{\frac{\bar c}{\bar c-1}}(1-0.9513)\Big]\\
&+2\left(\sin^{-1}(\frac{1}{e^{\frac{1.31-1}{2}}})-\frac{\pi}{3}\right)\geq0.0988\pi.
\end{split}
\end{equation}Therefore, $ (h_1+2h_2+2\Delta\theta_{NA})(c)>0.9031\pi+0.0988\pi>\pi $ for
$ \bar c \leq c< e^{\frac{1.31-1}{2}}$.

Combining above estimates, we obtain an contradiction for
$c_{\mathbf{up}}\geq \bar c$. Therefore, $c_{\mathbf{up}}\in I_A$.
Since the network is regular, 
$\psi_{\mathbf{down}}=\psi_{\mathbf{up}}+\frac{\pi}{3}$. Using the
conservation law (\ref{eq:conservation_law}),
\begin{equation}
\frac{\sin(\psi_{\mathbf{up}}+\frac{\pi}{3})}{\sin\psi_{\mathbf{up}}}=
\frac{\sin\psi_{\mathbf{down}}}{\sin\psi_{\mathbf{up}}}
=\frac{c_{\mathbf{up}}}{c_{\mathbf{down}}}
\leq \frac{\bar c}{c_*}.
\end{equation}
Since $\frac{\sin (x+\frac{\pi}{3})}{\sin x}$ decreases on the
interval $\in (0,\frac{\pi}{3}]$, we have
$\psi_{\mathbf{up}}>0.3307\pi$. Note that on $\gamma_{\mathbf{in}}$,
we have $\Delta\phi>0$. Hence,
\begin{equation}
\theta_{\mathbf{up}}=\theta_{\mathbf{in}}=\Delta\phi-\Delta\psi
\geq 0+(\psi_{\mathbf{in}}-(\pi-\psi_{\mathbf{in}}))
=\frac{\pi}{3}+2\psi_{\mathbf{up}}>0.9947\pi.
\end{equation}
Combining Proposition \ref{inout},
$\theta_{\mathbf{up}}=\theta_{\mathbf{in}}\in (0.9947\pi,\pi]$.
\end{proof}

\begin{thm}\label{thm:inout}
For the case that the bottom cell is either a 3-cell, a 4-cell, or a
5-cell, it is impossible either $R_{\mathbf{start}}>1$ or
$R_{\mathbf{end}}>1$.
\end{thm}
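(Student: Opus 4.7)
The plan is to reduce to one configuration and then exhaust the nine phase-plane path types listed in the middle column of the table. By Proposition \ref{prop:sym_start_end} we have $K(R_{\mathbf{start}})=K(R_{\mathbf{end}})$, and since $K$ is strictly monotone on $(0,1)$ and on $(1,\infty)$, the hypothesis of the theorem together with Theorem \ref{thm:outout} (which already excludes $R_{\mathbf{start}}=R_{\mathbf{end}}>1$) forces $R_{\mathbf{start}}<1<R_{\mathbf{end}}$, up to a symmetric relabeling. I will derive a contradiction in this remaining configuration.

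In this regime, Proposition \ref{prop:range_for_gamma} supplies two very tight constraints: $c_{\mathbf{up}}\in(c_{*},\bar c)$ with $\theta_{\mathbf{up}}=\theta_{\mathbf{in}}\in(0.9947\pi,\pi]$, so $\theta_{\mathbf{down}}=2\pi-\theta_{\mathbf{up}}$ is pinned to the narrow window $[\pi,\,1.0053\pi)$. Moreover $\psi_{\mathbf{up}}>0.3307\pi$, so the starting $\psi$-value for $\gamma_{\mathbf{down}}$ satisfies $\psi_{S}=\frac{2\pi}{3}-\psi_{\mathbf{up}}\in[\pi/3,\,0.3360\pi)$, placing $S$ extremely close to $D$ on the $c_{\mathbf{down}}$-trajectory; this keeps $h_1^{\circ}(c_{\mathbf{down}})$ small and makes $h_3^{\circ}(c_{\mathbf{down}})$ close to $h_3(c_{\mathbf{down}})$. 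By Proposition \ref{prop:energy_comparison} we also have $c_{\mathbf{down}}\in[c_{*},\bar c)$.

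For each of the nine admissible paths the table writes $\theta_{\mathbf{down}}=h_1^{\circ}+h_3^{\circ}+a h_1+b h_2$ at $c_{\mathbf{down}}$ for a specific pair $(a,b)$. The strategy is to show that none of these sums can sit inside $[\pi,\,1.0053\pi)$. For the heavy patterns (the 5-cell paths with $b\in\{5,7\}$ and the 4-cell pattern $SA\!\to\!BA\!\to\!BE$) I combine the bound $h_1+2h_2>0.7789\pi$ from Theorem \ref{h12h2}, the inequality $h_1>h_3$ from Lemma \ref{lem:left_right_ineq}, and the Abresch--Langer bound $T>\pi$ to push $\theta_{\mathbf{down}}$ above $1.0053\pi$. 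For the light patterns (both 3-cell paths and the simplest 4- and 5-cell paths) I exploit $h_1^{\circ}\approx 0$ and $h_3^{\circ}\approx h_3$ to reduce $\theta_{\mathbf{down}}$ essentially to $a h_1+b h_2+h_3$, then use refined potential estimates built on Lemma \ref{lower2nd}, specialized to the very thin energy window $[c_{*},\bar c)$, to push the sum strictly below $\pi$.

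The main obstacle will be the 3-cell path $SD\!\to\!CE$, where $(a,b)=(1,1)$ and $\theta_{\mathbf{down}}$ essentially reduces to $T(c_{\mathbf{down}})-h_2(c_{\mathbf{down}})$, a priori only known to lie in the wide range $(\pi-h_2,\,\sqrt{2}\pi)$. Forcing it strictly below $\pi$ requires a uniform lower bound on $h_2$ over $[c_{*},\bar c)$ sharper than the one immediately available from Theorem \ref{h12h2}; I would obtain this by refining the piecewise lower bound for $V(k)$ from Lemma \ref{lower2nd} on the tiny energy window at hand, together with an explicit upper bound for the improper contribution near $\psi=\pi/3$ to $h_1^{\circ}$. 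An analogous sharp estimate is needed for the two intermediate 4-cell paths, which sit closest to the target interval and are therefore the most delicate.
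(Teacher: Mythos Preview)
Your reduction to $R_{\mathbf{start}}<1<R_{\mathbf{end}}$ and your use of Proposition \ref{prop:range_for_gamma} to pin $\theta_{\mathbf{down}}\in[\pi,1.0053\pi)$ are correct and match the paper. The error is in the second half of the plan: the ``light patterns'' cannot be pushed \emph{below} $\pi$. Take the lightest one, the 3-cell path $SD\!\to\!CE$, where $\theta_{\mathbf{down}}=h_1^\circ+h_1+h_2+h_3^\circ$. By Lemma \ref{lem:left_right_ineq} (applied to $\Delta\theta_{SD}\geq\Delta\theta_{EB}$) one has $h_1^\circ+h_3^\circ>h_3$, so $\theta_{\mathbf{down}}>h_1+h_2+h_3=T(c_{\mathbf{down}})-h_2(c_{\mathbf{down}})$. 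Since $h_2(c_*)=0$ (the arc $DA$ collapses at $c_*$) and $T(c_*)>\pi$, the quantity $T-h_2$ exceeds $\pi$ near $c_*$; there is no uniform positive lower bound on $h_2$ over $[c_*,\bar c)$, so the ``sharper bound on $h_2$'' you say you need simply does not exist. The obstacle you flag is fatal, not merely technical.

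The paper avoids the case split altogether by observing that the minimum over \emph{all} nine middle-column patterns is $h_1^\circ+h_1+h_2+h_3^\circ$, and bounding this uniformly from \emph{above} the target window. From $h_1^\circ+h_3^\circ>h_3$ one gets $\theta_{\mathbf{down}}>h_1(c_{\mathbf{down}})+h_3(c_{\mathbf{down}})$ after dropping $h_2\geq 0$; since $h_1,h_3$ are decreasing and $c_{\mathbf{down}}\leq c_{\mathbf{up}}<\bar c$, this is at least $h_1(\bar c)+h_3(\bar c)$. Two short computations give $h_3(\bar c)\geq\pi/3$ (because $R^+(\bar c\sin\psi)<\sqrt{2}$) and $h_1(\bar c)>0.7027\pi$ (via Lemma \ref{lower2nd}), hence $\theta_{\mathbf{down}}>1.036\pi$ for every admissible path, contradicting $\theta_{\mathbf{down}}<1.0053\pi$. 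So the correct move is to push \emph{all} patterns above the window, not to try to push some of them below it.
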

\begin{proof}
Assume the contrary, without loss of generality, let
$R_{\mathbf{start}}<1<R_{\mathbf{end}}$. For the case that the
bottom cell is either a 3-cell, 4-cell, or 5cell, from lemma
\ref{lem:left_right_ineq}, we have
$\Delta\theta_{SD}\geq\Delta\theta_{EB}$ and
\begin{equation}
\begin{split}
    (h_1^\circ+h_3^\circ)(c_{\mathbf{down}})&=(\Delta\theta_{SC}+\Delta\theta_{AE})(c_{\mathbf{down}})>(\Delta\theta_{EB}+\Delta\theta_{AE})(c_{\mathbf{down}})=h_3(c_{\mathbf{down}}).\\
\end{split}
\end{equation}
Therefore,
\begin{equation}
\begin{split}
\theta_{\mathbf{down}}\geq&(h_1^\circ+h_1+h_2+h_3^\circ)(c_{\mathbf{down}})\geq h_1(c_{\mathbf{down}})+h_3(c_{\mathbf{down}})
\geq h_1(\bar c)+h_3(\bar c),
\end{split}
\end{equation}
where the last inequality comes from that $h_1(c)$ and $h_3(c)$
decrease as $c$ increases.  Since $K(R^+(\bar c\sin\psi))=\bar
c\sin\psi\leq \bar c<K(\sqrt{2})=\frac{\sqrt{e}}{\sqrt{2}}$, we have
$\sqrt{2}>R^+( \bar c\sin\psi)$ and
\begin{equation} \label{ineq:h3barc}
h_3(\bar c)=\int^{\frac{2\pi}{3}}_{\frac{\pi}{3}}\frac{d\psi}{[R^+(\bar c\sin\psi)]^2-1}\geq\frac{\pi}{3}\frac{1}{(\sqrt{2})^2-1}=\frac{\pi}{3}.
\end{equation}
On the other hand,  using lemma \ref{lower2nd},
\begin{equation} \label{ineq:h1barc}
h_1(\bar c)=\int_{k_\mathbf{min}}^{k_2}\frac{2dk}{\sqrt{\bar\eta-V(k)}}+\frac{\pi}{3}
\geq \frac{2}{\sqrt{1+\frac{1}{k_{\mathbf{min}}}}}\left(
\frac{\pi}{2}-\sin^{-1}\left(\frac{1-k_2}{1-k_{\mathbf{min}}}\right)
\right)+\frac{\pi}{3},
\end{equation}
where $k_{\mathbf{min}}$  is the global minimum for curvature of
$\gamma_{\bar c}$, and $k_2$ is the curvature of $\gamma_{\bar c}$
at the point $D(\bar c)$, and $\bar\eta=1.3065$. By calculating,
$k_{\mathbf{min}}\in (0.6376,0.6377)$ and $k_2\in (0.7834,0.7835)$,
and $h_1(\bar c)\geq 0.7027\pi$. Therefore,
\begin{equation}
\theta_{\mathbf{in}}+\theta_{\mathbf{down}}\geq 0.9947\pi+(0.7027\pi+\frac{\pi}{3})>2\pi.
\end{equation}
It is impossible.
\end{proof}

From now on, for the case that the bottom cell is either a 3-cell, a
4-cell or a 5-cell, we have $R_{\mathbf{start}}=R_{\mathbf{end}}<1$.

\begin{thm}
There does not exist solution with bottom cell being a 5-cell.
\end{thm}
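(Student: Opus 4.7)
The plan is to combine the tabulated list of admissible paths preceding Theorem \ref{thm:outout} with the sharp numerical estimate $h_1(\bar c) \geq 0.7027\pi$ already extracted in the proof of Theorem \ref{thm:inout}. By Theorems \ref{thm:outout} and \ref{thm:inout}, the only configuration left to exclude is $R_{\mathbf{start}} = R_{\mathbf{end}} < 1$, so I restrict attention to this case.

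Reading the left column of the table for a 5-cell, every admissible path on the phase plane is of the form
\begin{equation*}
\theta_{\mathbf{down}} = 2h_1^\circ(c_{\mathbf{down}}) + 2h_1(c_{\mathbf{down}}) + 2j\, h_2(c_{\mathbf{down}}), \qquad j = 0,1,2,3.
\end{equation*}
Since $h_1^\circ \geq 0$ and $h_2 \geq 0$, every such path satisfies $\theta_{\mathbf{down}} \geq 2h_1(c_{\mathbf{down}})$. By Proposition \ref{prop:energy_comparison} and Proposition \ref{prop:range_for_gamma}, one has $c_* \leq c_{\mathbf{down}} \leq c_{\mathbf{up}} < \bar c$. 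Because $h_1$ is strictly decreasing in $c$, this gives $h_1(c_{\mathbf{down}}) \geq h_1(\bar c) \geq 0.7027\pi$, and hence $\theta_{\mathbf{down}} \geq 1.4054\pi$.

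On the other hand, Proposition \ref{prop:range_for_gamma} furnishes $\theta_{\mathbf{up}} > 0.9947\pi$, and the identity $\theta_{\mathbf{up}} + \theta_{\mathbf{down}} = 2\pi$ forces $\theta_{\mathbf{down}} < 1.0053\pi$, contradicting the lower bound $1.4054\pi$. The argument requires no new estimates beyond what is already proved; the only substantive observation is that every entry in the 5-cell column of the table carries the full contribution $2h_1$, so all four subcases are ruled out simultaneously. I do not foresee any technical obstacle past correctly citing the prior bounds.
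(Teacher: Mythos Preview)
Your proof is correct and follows essentially the same approach as the paper: both restrict to $R_{\mathbf{start}}=R_{\mathbf{end}}<1$ via Theorems \ref{thm:outout} and \ref{thm:inout}, read off $\theta_{\mathbf{down}}\geq 2h_1(c_{\mathbf{down}})$ from the table, and combine the monotonicity of $h_1$ with the bound $h_1(\bar c)>0.7027\pi$ from the proof of Theorem \ref{thm:inout} to contradict $\theta_{\mathbf{up}}+\theta_{\mathbf{down}}=2\pi$. The only cosmetic difference is that the paper bounds $\theta_{\mathbf{up}}$ below by $(h_1+2h_2)(c_{\mathbf{up}})>0.7789\pi$ via Theorem \ref{h12h2}, whereas you invoke the sharper $\theta_{\mathbf{up}}>0.9947\pi$ from Proposition \ref{prop:range_for_gamma}; either suffices.
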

\begin{proof}
The smallest possible angle for $\theta_{\mathbf{down}}$ is that the
triple junctions are all of the $D\to C$ type. Since
$R_{\mathbf{start}}=R_{\mathbf{end}}<1$,
\begin{equation}
\theta_{\mathbf{down}}\geq(2h_1^\circ+2h_1)(c_{\mathbf{down}})\geq2h_1(c_{\mathbf{up}}).
\end{equation}

From theorem \ref{h12h2}, for every $c\geq c_*$, we have
$h_1(c)+2h_2(c)>0.7789\pi(>\frac{2\pi}{3})$. This equation is a
lower bound of $\theta_{\mathbf{up}}$.  From the proof of theorem
\ref{thm:inout}, we have $h_1(\bar{c})>0.7027\pi$. Therefore,
\begin{equation}
\theta_{\mathbf{up}}+\theta_{\mathbf{down}}>(h_1+2h_2)(c_{\mathbf{up}})+2h_1(c_{\mathbf{up}})>0.7789\pi+2\times 0.7027\pi>2\pi
\end{equation}
and we get a contradiction.
\end{proof}

\section{2 4-cells or a 4-cell and a 3-cell}
First, we consider the case which $\gamma_{\mathbf{down}}$ is $SA\to
BA\to BE$. In this case, there is a symmetry between
$\gamma_{\mathbf{down}}$ and $\gamma_{\mathbf{up}}$. Precisely
speaking, for any $R_0$, on the trajectory of energy c, define
$P(R_0)=(R_0,\sin^{-1}\frac{K(R_0)}{c})$. We have
$S=P(R_{\mathbf{start}})$ for $\gamma_{\mathbf{up}}$ and
$\gamma_{\mathbf{down}}$. The change of angle can be expressed as
\begin{equation}
\begin{split}
\theta_{\mathbf{up}}=&(h_1+2h_2+2\Delta \theta_{P(R_{\mathbf{start}})A})(c_{\mathbf{up}}),\\
\theta_{\mathbf{down}}=&(h_1+4h_2+2h^\circ_1)(c_{\mathbf{down}})
=
(h_1+2h_2+2\Delta\theta_{P(R_{\mathbf{start}})A})(c_{\mathbf{down}}).
\end{split}
\end{equation}
Note that
\begin{equation}
 \Delta \theta_{P(R_{\mathbf{start}})A}(c)=\int_{R_{\mathbf{start}}}^{R^+(\frac{\sqrt{3}}{2}c)} \frac{K(R)dR}{R\sqrt{c^2-K(R)^2}}.
\end{equation}
To obtain uniqueness and existence of the regular shrinker in this
case, we need the following lemmas.

\begin{lem}\label{lem:increase1}
Given a number $0.7\leq R_0\leq1$, $\Delta \theta_{P(R_0)A}(c)$
strictly increases on the admissible interval $I_A=[c_*,\bar c]$.
\end{lem}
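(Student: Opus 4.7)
The plan is to show $g(c) := \Delta\theta_{P(R_0)A}(c)$ has strictly positive derivative on $(c_*,\bar c]$ (with an infinite right-derivative at $c_*$), which yields strict monotonicity on all of $I_A$.

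I would first use formula (\ref{eq:int_R_for_theta}) to write
\begin{equation*}
g(c) = \int_{R_0}^{u(c)}\frac{K(R)\,dR}{R\sqrt{c^2-K(R)^2}}, \qquad u(c)=R^+\!\left(\tfrac{\sqrt{3}}{2}c\right),
\end{equation*}
and implicitly differentiate $K(u(c))=\tfrac{\sqrt{3}}{2}c$, using $K'(R)=K(R)(R^2-1)/R$, to get $u'(c)=u/(c(u^2-1))$. The Leibniz rule then produces
\begin{equation*}
g'(c) = \frac{\sqrt{3}}{c(u^2-1)} - c\int_{R_0}^{u(c)}\frac{K(R)\,dR}{R(c^2-K(R)^2)^{3/2}}.
\end{equation*}
Because $u(c_*)=R^+(1)=1$, the boundary term diverges as $c\to c_*^+$ while the integral stays bounded there, so $g'(c)>0$ is immediate near $c_*$; the task is to confirm it persists throughout $[c_*,\bar c]$.

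For $c>c_*$, I would split the $R$-integral at $R=1$ and substitute $s=K(R)$ on each half (using $R=R^-(s)$ on $[R_0,1]$ and $R=R^+(s)$ on $[1,u]$) to obtain
\begin{equation*}
\int_{R_0}^{u(c)}\frac{K\,dR}{R(c^2-K^2)^{3/2}} = \int_1^{K(R_0)}\frac{ds}{(1-[R^-(s)]^2)(c^2-s^2)^{3/2}} + \int_1^{\frac{\sqrt{3}}{2}c}\frac{ds}{([R^+(s)]^2-1)(c^2-s^2)^{3/2}}.
\end{equation*}
Both $s$-integrals invite integration by parts against the antiderivative $V(s)=s/(c\sqrt{c^2-s^2})$. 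Subtracting $V(1)$ regularizes the $s=1$ endpoint, since the Taylor expansions $R^\pm(s)=1\pm\sqrt{s-1}+O(s-1)$ give $[R^\pm(s)]^2-1\sim 2\sqrt{s-1}$ and $V(s)-V(1)\sim c(s-1)/(c^2-1)^{3/2}$, so the $UV$ boundary contribution at $s=1$ vanishes like $\sqrt{s-1}$. The leading $\sqrt{3}/(c(u^2-1))$ cancels the IBP boundary term at $s=\tfrac{\sqrt{3}}{2}c$, and I expect $g'(c)$ to collapse to the closed form
\begin{equation*}
g'(c) = \frac{1}{c\sqrt{c^2-1}}\left(\frac{1}{u^2-1}+\frac{1}{1-R_0^2}\right) - \frac{K(R_0)}{c(1-R_0^2)\sqrt{c^2-K(R_0)^2}} - \tilde J(c,R_0),
\end{equation*}
where $\tilde J>0$ is a sum of explicit remainder integrals of the form $\int (V-V(1))\cdot \frac{2[R^\pm(s)]^2}{s\,|[R^\pm(s)]^2-1|^3}\,ds$.

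The main obstacle is bounding $\tilde J$ tightly enough that the explicit terms dominate. Because the $\tilde J$-integrands have only an integrable $(s-1)^{-1/2}$ singularity, I would control them using the uniform bound $V(s)-V(1)\le c(s-1)/(c^2-1)^{3/2}$ together with the leading-order lower bounds $[R^\pm(s)]^2-1\ge c_1\sqrt{s-1}$ on the relevant $s$-range (with an explicit constant $c_1>0$ coming from the expansion above and a monotonicity check), so that $\tilde J$ is dominated by a multiple of $2\sqrt{K(R_0)-1}+2\sqrt{\tfrac{\sqrt{3}}{2}c-1}$. On the narrow interval $c\in[c_*,\bar c]$ and $R_0\in[0.7,1]$ the quantities $u^2-1$, $1-R_0^2$, $c^2-1$, $K(R_0)-1$ and $\tfrac{\sqrt{3}}{2}c-1$ are all trapped in explicit ranges, and a finite table of numerical inequalities in the style of the proof of Theorem~\ref{h12h2} should clinch positivity of $g'(c)$. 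A case split at $K(R_0)\lessgtr\tfrac{\sqrt{3}}{2}c$ (equivalently, whether $P(R_0)$ lies between $N$ and $D$ on the trajectory, or between $D$ and the minimum-$R$ vertex) would simplify the bookkeeping, since in each case $u(c)$ and $K(R_0)$ can be compared directly.
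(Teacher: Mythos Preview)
Your outline is plausible and the closed form you anticipate after integration by parts is correct: with $V(s)=s/(c\sqrt{c^2-s^2})$ the boundary contribution at $s=\tfrac{\sqrt{3}}{2}c$ really does cancel the leading $\sqrt{3}/(c(u^2-1))$, and the regularization by $V(1)$ handles the $s=1$ endpoint as you say. However, you never actually carry out the final numerical step, and the resulting expression for $g'(c)$ is a sum of three competing signed terms whose comparison is no less delicate than the original problem; in particular the $\frac{1}{1-R_0^2}$ contributions on either side nearly cancel as $R_0\to 1$, so the bookkeeping you defer to a ``finite table'' would require some care.

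The paper's proof is considerably more direct and avoids integration by parts entirely. After writing $g'(c)=\frac{\sqrt{3}}{c(u^2-1)}-\frac{1}{c}\int_{R_0}^{u}\frac{W(R,c)}{R}\,dR$ with $W(R,c)=(K/c)\,(1-(K/c)^2)^{-3/2}$, it simply observes that $W(R,c)$ is decreasing in $c$, so $W(R,c)\le W(R,c_*)$, and that $u(c)<1.1$ on $I_A$ since $K(1.1)>\tfrac{\sqrt{3}}{2}\bar c$. This reduces the integral to a fixed, $c$-independent upper bound $\int_{0.7}^{1.1}W(R,c_*)/R\,dR$, which is then estimated by a four-piece Riemann-sum bound on $[0.7,0.73]\cup[0.73,0.8]\cup[0.8,0.9]\cup[0.9,1.1]$, yielding a total below $\sqrt{3}/0.21$. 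Your route trades this one-line monotonicity observation for a change of variables, an IBP with a regularized antiderivative, a Taylor expansion of $R^\pm$, and a case split, only to arrive at an expression that still needs the same kind of crude numerical comparison. The paper's argument is both shorter and easier to verify.
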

\begin{proof}
As $c\in I_A$, using
$K(R^+(\frac{\sqrt{3}c}{2}))=\frac{\sqrt{3}}{2}c$, we have
$\frac{dR^+}{dc}=\frac{1}{c(R^+-\frac{1}{R^+})}$ and
\begin{equation}
\begin{split}
\frac{d\Delta\theta_{P(R_0)A}(c)}{dc}=&\frac{\frac{\sqrt{3}c}{2}}{R^+\sqrt{c^2-(\frac{\sqrt{3}c}{2})^2}}
\frac{1}{c(R^+-\frac{1}{R^+})}-\int_{R_0}^{R^+(\frac{\sqrt{3}c}{2})} \frac{cK(R)dR}{R\left(\sqrt{c^2-K^2(R)}\right)^3}.\\
\end{split}
\end{equation}
Let $W(R,c)=\frac{K(R)/c}{\left(\sqrt{1-(K(R)/c)^2}\right)^3}$. Note
that, fixed $R$, $W(R,c)$ is a decreasing function of $c$ and
$W(R,c_*)>0$ because of $K(R)<c_*$ for $R\in [0.7,1.1]$. Therefore,
\begin{equation}
\begin{split}
\frac{d\Delta \theta_{P(R_0)A}(c)}{dc}\geq& \frac{1}{c}\left(\frac{\sqrt{3}}{R^+(\frac{\sqrt{3}c}{2})^2-1}-\int_{R_0}^{R^+(\frac{\sqrt{3}c}{2})} \frac{W(R,c_*)dR}{R}\right)\\
\geq&\frac{1}{c}\left(\frac{\sqrt{3}}{1.1^2-1}-\int_{0.7}^{1.1} \frac{W(R,c_*)dR}{R}\right)\\
\geq& \frac{1}{c}\left(\frac{\sqrt{3}}{0.21}-\overset{4}{\underset{i=1}\Sigma } \left(\underset{R\in J_i}\max\frac{|J_i|}{R}\right) \left(\underset{R\in J_i}\max W(R,c_*)\right)
\right),
\end{split}
\end{equation}
where $J_1=[0.7,0.73]$, $J_2=[0.73,0.8]$, $J_3=[0.8,0.9]$,
$J_4=[0.9,1.1]$, and the second inequality comes from
$\underset{c\in I_A}\max R^+(\frac{\sqrt{3}c}{2})<1.1$ because of
$K(1.1)>\frac{\sqrt{3}}{2}\bar c$. Since $W(R,c_*)$ decreases for
$R\in [0.7,1]$ and increases for $R\in[1, 1.1]$, we have
\begin{equation}
\begin{split}
&\left(\underset{R\in J_1}\max\frac{|J_1|}{R}\right) \left(\underset{R\in J_1}\max W(R,c_*)\right)=\frac{0.03}{0.7}W(0.7,\frac{2}{\sqrt{3}})<1.81\\
&\left(\underset{R\in J_2}\max\frac{|J_2|}{R}\right) \left(\underset{R\in J_2}\max W(R,c_*)\right)=\frac{0.07}{0.73}W(0.73,\frac{2}{\sqrt{3}})<2.27\\
&\left(\underset{R\in J_3}\max\frac{|J_3|}{R}\right) \left(\underset{R\in J_3}\max W(R,c_*)\right)=\frac{0.1}{0.8}W(0.8,\frac{2}{\sqrt{3}})<1.47\\
&\left(\underset{R\in J_4}\max\frac{|J_4|}{R}\right) \left(\underset{R\in J_4}\max W(R,c_*)\right)=\frac{0.2}{0.9}\max \left\{W(0.9,\frac{2}{\sqrt{3}}), W(1.1,\frac{2}{\sqrt{3}})\right\}<1.74.
\end{split}
\end{equation}
Therefore,
\begin{equation}
\frac{d\Delta\theta_{P(R_0)A}(c)}{dc}\geq\frac{1}{c}\left(\frac{\sqrt{3}}{0.21}-(1.81+2.27+1.47+1.74)\right)>0.
\end{equation}
That is, $\Delta\theta_{P(R_0)A}(c)$ increases on the admissible
interval $I_A$.
\end{proof}

\begin{lem}\label{lem:increaseh12h2}
 $(h_1+2h_2)(c)$ and $h_2(c)$ are increasing on the admissible interval $I_A$.
\end{lem}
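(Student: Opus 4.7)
The plan is to handle the two assertions separately, since they have different integral representations and different boundary behavior.

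For $h_2(c)$, I would start from the $R$-integral
\[
h_2(c) = \int_{R^-(\frac{\sqrt 3}{2}c)}^{R^+(\frac{\sqrt 3}{2}c)} \frac{K(R)\,dR}{R\sqrt{c^2-K(R)^2}}
\]
and differentiate in $c$. Since $K(R^\pm(\frac{\sqrt 3}{2}c)) = \frac{\sqrt 3}{2}c < c$, both endpoints are non-singular, and the derivative decomposes cleanly as two positive boundary contributions $\frac{\sqrt 3}{c((R^+)^2-1)} + \frac{\sqrt 3}{c(1-(R^-)^2)}$ minus a negative interior term $\frac{1}{c}\int \frac{W(R,c)}{R}\,dR$ with $W$ the kernel from Lemma~\ref{lem:increase1}. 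On $I_A$ the integration interval is contained in roughly $[0.9,1.1]$ (the upper bound $R^+(\frac{\sqrt 3 c}{2})<1.1$ is from Lemma~\ref{lem:increase1}; the lower bound follows symmetrically from $K(0.9)>\frac{\sqrt 3}{2}\bar c$). Using $W(R,c)\le W(R,c_*)$ and a sub-interval maximum estimate exactly parallel to Lemma~\ref{lem:increase1}, I would show that the interior integral is dominated by either boundary term, giving $h_2'(c)>0$ on $I_A$.

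For $h_1+2h_2$, I would collapse the three pieces into a single $k$-integral via the conservation law (\ref{eq:conservation_law1}):
\[
(h_1+2h_2)(c) = 2\int_{R^-(c)}^{k_B(c)}\frac{dk}{\sqrt{\eta-V(k)}} + \frac{\pi}{3},
\]
where $\eta=1+2\log c$ and $k_B(c)=\frac{\sqrt 3}{2}R^+(\frac{\sqrt 3}{2}c)$. The integrand has a square-root singularity at $k=R^-(c)$, which I would remove with the substitution $u=\sqrt{\eta-V(k)}$: the integral becomes $\int_0^{u_B}\frac{-2\,du}{V'(k(u,\eta))}$ with $u_B=\frac{1}{2}R^+(\frac{\sqrt 3}{2}c)$ and $k(u,\eta)$ the smooth branch of $V(k)=\eta-u^2$ on $(R^-(c),1)$. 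Differentiating now yields a positive boundary term at $u_B$ (from $u_B'(c)>0$ and $-V'(k_B)>0$) against a negative interior term proportional to $\int_0^{u_B}\frac{V''(k)}{V'(k)^3}\,du$, which is negative on $k<1$. The last step is a numerical sub-interval comparison on $I_A$, using the bounds $R^-(c)>0.63$ and $k_B(c)<0.96$ already extracted in the proof of Theorem~\ref{thm:inout}.

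The main obstacle is this last comparison. At $c=c_*$ the boundary term blows up like $(c-c_*)^{-1/2}$ while the interior stays finite, so positivity is automatic there; but near $c=\bar c$ the two terms have comparable magnitudes, because $V''(k)/V'(k)^3$ becomes large as $k_B$ approaches $1$. The dominance of the boundary cannot be read off a crude bound and must be established by a careful subdivision of $[0,u_B]$ with explicit estimates on $V''/V'^3$, in the spirit of the calculator tables in Theorem~\ref{h12h2} and Lemma~\ref{lem:increase1}.
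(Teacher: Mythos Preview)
Your approach is viable but takes a substantially different and more laborious route than the paper's. The paper does not treat $h_2$ directly at all: it first proves $(h_1+2h_2)$ is increasing, and then simply observes that since $h_1$ is decreasing (a fact already recorded after the definition of $h_1,h_2,h_3$), the monotonicity of $h_2$ follows immediately from $2h_2 = (h_1+2h_2) - h_1$. So your direct differentiation of the $R$-integral for $h_2$ is correct and the boundary/interior comparison does go through on $[0.9,1.1]$, but it duplicates work that the paper gets for free.

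For $(h_1+2h_2)$, the paper's key idea is to split the trajectory at a \emph{fixed} radius $R=0.7$ rather than at the moving endpoint $k_{\mathbf{min}}=R^-(c)$. Writing
\[
(h_1+2h_2)(c)=\Delta\theta_{Q^+Q^-}(c)+2\Delta\theta_{P(0.7)A}(c),
\]
the second summand is increasing directly by Lemma~\ref{lem:increase1}, and the first summand is a $\psi$-integral over $\frac{1}{1-(R^-)^2}$ with $R^-\le 0.7$ bounded away from $1$, so its $c$-derivative can be controlled by a single crude bound with no subdivision. This sidesteps exactly the obstacle you identify: by anchoring at $R=0.7$ the singular endpoint $k_{\mathbf{min}}$ never enters the differentiation, and no substitution $u=\sqrt{\eta-V(k)}$ or delicate comparison of $V''/V'^3$ against a boundary term is needed. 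Your plan would work after the careful subdivision you describe, but the paper's decomposition reuses existing machinery and avoids that computation entirely.
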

\begin{proof}
For any $c\in I_A$, let $Q^+$ and $Q^-$ be
$(0.7,\pi-\sin^{-1}\frac{K(0.7)}{c})$ and
$(0.7,\sin^{-1}\frac{K(0.7)}{c})$ on the $R-\psi$ plane.
\begin{equation}\label{eq:h12h2_6}
h_1(c)+2h_2(c)=\Delta\theta_{Q^+Q^-}(c)+2\Delta \theta_{Q^-A}(c)=f(c)+2\Delta \theta_{P(0.7)A}(c),
\end{equation}
where
\begin{equation}
f(c)=\int_{\sin^{-1}(\frac{K(0.7)}{c})}^{\pi-\sin^{-1}(\frac{K(0.7)}{c})} \frac{d\psi}{1-R^-(c\sin\psi)^2}.
\end{equation}
As $c\in I_A=[c_*,\bar c]$,
\begin{equation}
\begin{split}
f'(c)=&\frac{2}{c}\left(\frac{K(0.7)}{0.51}\frac{1}{\sqrt{c^2-K^2(0.7)}}-\int_{\sin^{-1}(\frac{K(0.7)}{c})}^{\pi-\sin^{-1}(\frac{K(0.7)}{c})} \frac{(R^-)^2}{(1-(R^-)^2)^3}d\psi \right)\\
\geq&\frac{2}{c}\left(\frac{K(0.7)}{0.51}\frac{1}{\sqrt{(\bar c)^2-K^2(0.7)}}-\frac{0.7^2}{(1-0.7^2)^3}\left(\pi-2\sin^{-1}\frac{K(0.7)}{c}\right)\right)>0,
\end{split}
\end{equation}
where the inequality holds since $\frac{2(R^-)^2}{(1-(R^-)^2)^3}$ is
a decreasing function of $\psi$. We have $f(c)$ increases strictly
on $I_A$. Combining the equation (\ref{eq:h12h2_6}) and using Lemma
\ref{lem:increase1}, $h_1(c)+2h_2(c)$ increases on $I_A$. Since
$h_1(c)$ is decreasing, the function $h_2(c)$ is increasing on
$I_A$.
\end{proof}

\begin{thm}\label{thm:existence_nondegenerate}
For the case that the bottom cell is a 4-cell with $(R,\psi)$ being
$SA\to BA\to BE$, there exists a unique solution. The curve
$\gamma_{\mathbf{in}}$ is a line segment through the origin and the
network is symmetric with respect to $\gamma_{\mathbf{in}}$.
\end{thm}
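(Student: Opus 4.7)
My plan is to exploit the matching form of the two expressions for $\theta_{\mathbf{up}}$ and $\theta_{\mathbf{down}}$ displayed just before the theorem: both equal $F(c) := h_1(c)+2h_2(c)+2\Delta\theta_{P(R_{\mathbf{start}})A}(c)$, evaluated at $c=c_{\mathbf{up}}$ and $c=c_{\mathbf{down}}$ respectively. From $\theta_{\mathbf{up}}=\theta_{\mathbf{in}}\leq\pi$ (Proposition \ref{inout}) and $\theta_{\mathbf{up}}+\theta_{\mathbf{down}}=2\pi$, I obtain $F(c_{\mathbf{down}})\geq\pi\geq F(c_{\mathbf{up}})$. Lemmas \ref{lem:increase1} and \ref{lem:increaseh12h2} together show that $F$ is strictly increasing in $c$ on the admissible interval $I_A$, provided $R_{\mathbf{start}}\geq 0.7$. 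The input $c_{\mathbf{up}}\in I_A$ comes from Proposition \ref{prop:range_for_gamma}, and the corresponding input $c_{\mathbf{down}}\in I_A$ from Proposition \ref{prop:energy_comparison} together with the lower bound $c_{\mathbf{down}}\geq c_*$ forced by the triple junctions attached to rays. The bound $R_{\mathbf{start}}\geq 0.7$ follows from $K(R_{\mathbf{start}})=c_{\mathbf{up}}\sin\psi_{\mathbf{up}}\leq c_{\mathbf{up}}\sqrt{3}/2\leq\bar{c}\sqrt{3}/2$, which gives $R_{\mathbf{start}}\geq R^-(\bar{c}\sqrt{3}/2)$, a quantity a short numerical check places comfortably above $0.7$. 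Strict monotonicity then forces $c_{\mathbf{down}}\geq c_{\mathbf{up}}$, and combined with Proposition \ref{prop:energy_comparison} I conclude $c_{\mathbf{up}}=c_{\mathbf{down}}$.

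Once $c_{\mathbf{up}}=c_{\mathbf{down}}$ is established, the relation $\sin\psi_{\mathbf{up}}=K(R_{\mathbf{start}})/c_{\mathbf{up}}=K(R_{\mathbf{start}})/c_{\mathbf{down}}=\sin\psi_{\mathbf{down}}$ combined with $\psi_{\mathbf{up}}+\psi_{\mathbf{down}}=2\pi/3$ forces $\psi_{\mathbf{up}}=\psi_{\mathbf{down}}=\pi/3$, hence $\psi_{\mathbf{in}}=\pi$. Therefore $c_{\mathbf{in}}=\infty$ and $\gamma_{\mathbf{in}}$ is a line segment through the origin, and reflecting across $\gamma_{\mathbf{in}}$ maps $\gamma_{\mathbf{up}}$ and $\gamma_{\mathbf{down}}$ to one another (since both satisfy the same AL-equation with mirror-symmetric initial data at the common starting point), establishing the symmetry.

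For existence, specializing $\psi_{\mathbf{up}}=\pi/3$ makes $S$ coincide with $D$ on the phase plane, so $\Delta\theta_{P(R_{\mathbf{start}})A}(c)=h_2(c)$, and the equation $\theta_{\mathbf{up}}=\pi$ reduces to $(h_1+4h_2)(c)=\pi$. By Lemma \ref{lem:increaseh12h2}, $h_1+4h_2=(h_1+2h_2)+2h_2$ is strictly increasing on $I_A$. At $c=c_*$, the points $A$, $D$, $N$ all coincide so $h_2(c_*)=0$, giving $(h_1+4h_2)(c_*)=h_1(c_*)<\pi$ by Corollary \ref{cor:h1upper}. At $c=\bar{c}$, Proposition \ref{prop:range_for_gamma} supplies $(h_1+2h_2+2\Delta\theta_{NA})(\bar{c})>\pi$, and since $D\neq N$ for $c>c_*$ one has $\Delta\theta_{NA}(\bar{c})<h_2(\bar{c})$, so $(h_1+4h_2)(\bar{c})>\pi$. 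The intermediate value theorem then produces a unique $c^\star\in(c_*,\bar{c})$ with $(h_1+4h_2)(c^\star)=\pi$, and $R_{\mathbf{start}}=R^-(c^\star\sqrt{3}/2)$ is thereby determined. The main technical hurdle throughout is verifying $R_{\mathbf{start}}\geq 0.7$ in all admissible configurations, which as above reduces to the uniform a priori bound $c_{\mathbf{up}}\leq\bar{c}$ from Proposition \ref{prop:range_for_gamma}.
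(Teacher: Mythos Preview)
Your proof is correct and follows essentially the same route as the paper: both use the common expression $F(c)=(h_1+2h_2+2\Delta\theta_{P(R_{\mathbf{start}})A})(c)$ for $\theta_{\mathbf{up}}$ and $\theta_{\mathbf{down}}$, combine the monotonicity from Lemmas~\ref{lem:increase1}--\ref{lem:increaseh12h2} with the squeeze $\theta_{\mathbf{down}}\geq\pi\geq\theta_{\mathbf{up}}$ (from Proposition~\ref{inout}) and $c_{\mathbf{down}}\leq c_{\mathbf{up}}$ (from Proposition~\ref{prop:energy_comparison}) to force equality, and then invoke Corollary~\ref{cor:h1upper} and Proposition~\ref{prop:range_for_gamma} for the endpoint values of $h_1+4h_2$ on $I_A$. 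Your write-up in fact supplies a couple of details the paper leaves implicit, namely the deduction $\psi_{\mathbf{up}}=\psi_{\mathbf{down}}=\pi/3$ from $c_{\mathbf{up}}=c_{\mathbf{down}}$ and the reflection argument for the symmetry across $\gamma_{\mathbf{in}}$.
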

\begin{proof}
For this case, we have $R_{\mathbf{start}}=R_{\mathbf{end}}<1$,
therefore,
\begin{equation}
\theta_{\mathbf{down}}=(h_1+2h_2+2\Delta \theta_{P(R_{\mathbf{start}})A})(c_{\mathbf{down}}).
\end{equation}
Since $c_{\mathbf{up}}\in I_A$, we have
\begin{equation}
    R_{\mathbf{start}}>R^-(\frac{\sqrt{3}}{2}c_{\mathbf{up}})\geq R^-(\frac{\sqrt{3}}{2}\bar{c})>0.7.
\end{equation}

By lemma \ref{lem:increase1} and lemma \ref{lem:increaseh12h2},
since $c_*\leq c_{\mathbf{down}}\leq c_{\mathbf{up}}\leq\bar c$, we
have $\theta_{\mathbf{up}}\geq\theta_{\mathbf{down}}$ . On the other
hand, using proposition \ref{prop:range_for_gamma},
$\theta_{\mathbf{down}}=2\pi-\theta_{\mathbf{up}}\geq
2\pi-\pi=\pi\geq \theta_{\mathbf{up}}$. Therefore,
$\theta_{\mathbf{up}}=\theta_{\mathbf{down}}=\pi$,
$c_{\mathbf{down}}=c_{\mathbf{up}}$, and
$\psi_{\mathbf{up}}=\psi_{\mathbf{down}}=\frac{\pi}{3}$. Moreover,
$\theta_{\mathbf{down}}=(h_1+4h_2)(c_{\mathbf{down}})$.

 Use corollary \ref{cor:h1upper} and proposition \ref{prop:range_for_gamma},
\begin{equation}
    (h_1+4h_2)(c_*)=h_1(c_*)<\pi
\end{equation}
and
\begin{equation}
    (h_1+4h_2)(\bar c)>(h_1+2h_2)(\bar c)+2\Delta\theta_{NA}(\bar c)>\pi.
\end{equation}
By the continuity and the monotonicity of $h_1+4h_2$, there exists a
unique $c_0\in I_A$ such that
$c_{\mathbf{up}}=c_{\mathbf{down}}=c_0$ and
$\theta_{\mathbf{up}}=\theta_{\mathbf{down}}=h_1+4h_2={\pi}$.
\end{proof}

\begin{prop}
For the case that the bottom cell is a 3-cell or a 4-cell which is
not the previous case. There is no solution.
\end{prop}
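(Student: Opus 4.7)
The plan is to rule out all four remaining bottom topologies in one stroke using a single monotonicity comparison. Set
\[
\Theta(c, R_S) = h_1(c) + 2h_2(c) + 2\Delta\theta_{P(R_S)A}(c),
\]
which is exactly the expression that Theorem \ref{thm:existence_nondegenerate} identifies with $\theta_{\mathbf{up}}(c_{\mathbf{up}})$ and, evaluated at $c_{\mathbf{down}}$, with $\theta_{\mathbf{down}}(c_{\mathbf{down}})$ in the SA$\to$BA$\to$BE bottom case. Since $S_{\mathbf{down}}$ lies on the $CD$ arc with $\psi_{\mathbf{down}}\in[\pi/3,\pi/2)$, the counterclockwise path $S_{\mathbf{down}}\to D\to N\to A$ yields the identity $\Delta\theta_{P(R_S)A}(c_{\mathbf{down}}) = h_1^\circ(c_{\mathbf{down}}) + h_2(c_{\mathbf{down}})$, so that $\Theta(c_{\mathbf{down}},R_S) = (h_1+4h_2+2h_1^\circ)(c_{\mathbf{down}})$.

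Reading the bottom table of Section~5 and subtracting, each remaining $\theta_{\mathbf{down}}$ takes the form $\Theta(c_{\mathbf{down}},R_S) - \epsilon$: the 3-cell SD$\to$CE gives $\epsilon = (h_1+4h_2)(c_{\mathbf{down}})$; the 3-cell SA$\to$BE gives $\epsilon = (h_1+2h_2)(c_{\mathbf{down}})$; the 4-cell SD$\to$CD$\to$CE gives $\epsilon = 4h_2(c_{\mathbf{down}})$; and the 4-cell SA$\to$BD$\to$CE (equivalently SD$\to$CA$\to$BE) gives $\epsilon = 2h_2(c_{\mathbf{down}})$. Since $R_S > R^-(\tfrac{\sqrt 3}{2}\bar c) > 0.7$, Lemma \ref{lem:increase1} applies and, combined with Lemma \ref{lem:increaseh12h2}, shows $\Theta(\cdot,R_S)$ is strictly increasing on $I_A$. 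Proposition \ref{prop:energy_comparison} gives $c_{\mathbf{down}}\le c_{\mathbf{up}}$, hence
\[
\theta_{\mathbf{down}} + \epsilon \;\le\; \Theta(c_{\mathbf{down}},R_S) \;\le\; \Theta(c_{\mathbf{up}},R_S) \;=\; \theta_{\mathbf{up}}.
\]

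To finish, I would upgrade this to the strict inequality $\theta_{\mathbf{up}} > \theta_{\mathbf{down}}$. For the two 3-cell cases it is automatic since $\epsilon \ge h_1(c_{\mathbf{down}}) > 0$. For the two 4-cell cases with $c_{\mathbf{down}} > c_*$, strictness follows from $h_2(c_{\mathbf{down}}) > 0$. The only edge case is $c_{\mathbf{down}} = c_*$ for a bottom 4-cell, where $\epsilon = 0$; but $R_{\mathbf{start}} < 1$ forces $\psi_{\mathbf{down}}>\pi/3$ strictly, hence $\psi_{\mathbf{up}}<\pi/3$ and $c_{\mathbf{up}} > c_{\mathbf{down}}$, so strict monotonicity of $\Theta$ gives $\Theta(c_{\mathbf{up}},R_S) > \Theta(c_{\mathbf{down}},R_S)$ directly. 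Either way, $\theta_{\mathbf{up}} > \theta_{\mathbf{down}}$ strictly; combining with the closing identity $\theta_{\mathbf{up}} + \theta_{\mathbf{down}} = 2\pi$ yields $\theta_{\mathbf{up}} > \pi$, contradicting the bound $\theta_{\mathbf{up}} \le \pi$ established in Proposition \ref{prop:range_for_gamma}. The main bookkeeping step is verifying the identity $\Delta\theta_{P(R_S)A}(c_{\mathbf{down}}) = h_1^\circ + h_2$ and the explicit $\epsilon$ for each topology; the rest is a direct application of the monotonicity already set up in Section~6.
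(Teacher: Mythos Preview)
Your argument is correct and follows essentially the same route as the paper: bound $\theta_{\mathbf{down}}$ strictly above by $\Theta(c_{\mathbf{down}},R_S)$, use the monotonicity of $\Theta$ on $I_A$ together with $c_{\mathbf{down}}\le c_{\mathbf{up}}$ to get $\theta_{\mathbf{up}}>\theta_{\mathbf{down}}$, and then contradict $\theta_{\mathbf{up}}\le\pi$ from Proposition~\ref{prop:range_for_gamma}. The paper simply asserts the strict inequality $\theta_{\mathbf{down}}<\Theta(c_{\mathbf{down}},R_S)$ in one line; your explicit $\epsilon$-bookkeeping for each topology and separate treatment of the boundary case $c_{\mathbf{down}}=c_*$ is more careful, though in fact that boundary case is vacuous here (at $c_*$ the points $A=D$ and $B=C$ collapse, so all bottom 4-cell patterns coincide with $SA\to BA\to BE$ and are excluded by hypothesis).
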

\begin{proof}
For the case that the bottom cell $\gamma_{\mathbf{down}}$  is
either a 3-cell, or 4-cell which is not the special case, we have
\begin{equation}
\theta_{\mathbf{down}}<(h_1+2h_2+2\Delta \theta_{P(R_{\mathbf{start}})A})(c_{\mathbf{down}}).
\end{equation}

Again, we have $R_{\mathbf{start}}\in[0.7,1]$. Since $c_*\leq
c_{\mathbf{down}}\leq c_{\mathbf{up}} \leq\bar c$, we have
$\theta_{\mathbf{up}}\geq(h_1+2h_2+2\Delta
\theta_{P(R_{\mathbf{start}})A})(c_{\mathbf{down}})>\theta_{\mathbf{down}}$.
On the other hand, using proposition \ref{prop:range_for_gamma},
$\theta_{\mathbf{down}}=2\pi-\theta_{\mathbf{up}}\geq
2\pi-\pi=\pi\geq \theta_{\mathbf{up}}$. We obtain a contradiction.
\end{proof}

\section{Degenerate regular shrinkers}

We can find some degenerate regular shrinkers by allowing some edges
to be degenerate, which is a curve with zero length. The definition
of degernate regular shrinker can be found in \cite{MNPS}. The
theorems concerning the topology of the network in section 3 are
still applicable. Note that the curve with both ends attached to
rays cannot be degenerate, since the two rays can not form a
$\frac{\pi}{3}$ angle when they are not intersecting at the origin.
Therefore, the degenerate curves can only be the curves attached to
the starting or the ending point. Without loss of generality, assume
the first AL-curve on $\gamma_{\mathbf{up}}$ which goes out from the
starting point is degenerate. The angle
$\psi_{\mathbf{up}}=\frac{\pi}{3}$ and the starting point must be
either $D$ or $A$. For the other two curves, we have
$\psi_{\mathbf{in}}=\pi$ and $\gamma_{\mathbf{in}}$ must be a line
segment through the origin. Furthermore, we have
$\theta_{\mathbf{up}}=\theta_{\mathbf{down}}=\pi$ and
$c_{\mathbf{up}}=c_{\mathbf{down}}$. From proposition
\ref{prop:sym_start_end}, the ending point must be either the point
$B$ or $C$ on the trajectory.

The upper cell cannot be a 2-cell, otherwise, the starting point and
the ending point will be the same point and both
$\gamma_{\mathbf{up}}$ and $\gamma_{\mathbf{in}}$ will be
degenerate. It is impossible. Therefore, in the degenerate case, the
upper cell must be a 3-cell, a 4-cell or a 5-cell. Here, we use
$p$-cell to denote a cell with $p$ edges which are possibly
degenerate. From now on, we use the first curve, the second curve,
etc. to describe the smooth AL-curves when we traverse
$\gamma_{\mathbf{up}}$ from the starting point to the ending point.
Since $T(c)>\pi$ for any $c$, the curve can not have a complete
period on the trajectory. Note that if we find a solution for the
upper cell, since $\gamma_{\mathbf{in}}$ is a line segment on
$x$-axis, we can get a solution by letting $\gamma_{\mathbf{down}}$
be the reflection of $\gamma_{\mathbf{up}}$ with respect to
$x$-axis. Furthermore, if $R_{\mathbf{start}}=R_{\mathbf{end}}$, we
can get the solution by letting $\gamma_{\mathbf{up}}$ and
$\gamma_{\mathbf{down}}$ be symmetric with respect to the origin.

We need an estimation of angle to exclude some cases.
\begin{lem}\label{lem:2h1h2}
$(2h_1+h_2)(c)>\pi$ for any $c\geq c_*$.
\end{lem}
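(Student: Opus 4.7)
The plan is to split on $\eta = 1+2\log c$. For $\eta \geq 1.38$, Theorem \ref{h12h2} already gives $h_1+2h_2 > \pi$, and the trivial observation that $\tfrac{1}{1-[R^-]^2} > 1$ integrated over $\psi\in[\tfrac{\pi}{3},\tfrac{2\pi}{3}]$ gives $h_1>\tfrac{\pi}{3}$. Writing
\[
2(2h_1+h_2) \;=\; 3h_1 + (h_1+2h_2) \;>\; \pi+\pi \;=\; 2\pi
\]
then yields the desired inequality on this range.

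For the remaining range $c_*\leq c<e^{0.19}$ (equivalently $\eta_*\leq\eta<1.38$) the key step is to upgrade the bound on $h_1$ to $h_1>\tfrac{\pi}{2}$. Using $\Delta\phi_{CD} = h_1 - \tfrac{\pi}{3}$ and rewriting the arc $C\!\to\!D$ in the curvature variable gives
\[
h_1(c) \;=\; \frac{\pi}{3} + 2\int_{k_{\mathbf{min}}}^{k_C}\frac{dk}{\sqrt{\eta-V(k)}},\qquad k_{\mathbf{min}}=R^-(c),\ \ k_C=\tfrac{\sqrt{3}}{2}R^-\!\bigl(\tfrac{\sqrt{3}}{2}c\bigr),
\]
so $h_1>\tfrac{\pi}{2}$ reduces to showing this integral exceeds $\tfrac{\pi}{12}$. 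Applying Lemma \ref{lower2nd} with $k_0=k_1=k_{\mathbf{min}}$ and $k_2=k_C$ produces the explicit lower bound
\[
\int_{k_{\mathbf{min}}}^{k_C}\frac{dk}{\sqrt{\eta-V(k)}} \;\geq\; \frac{1}{\sqrt{1+1/k_{\mathbf{min}}}}\Bigl(\tfrac{\pi}{2}-\sin^{-1}\tfrac{1-k_C}{1-k_{\mathbf{min}}}\Bigr).
\]
I then subdivide $[\eta_*,1.38]$ into a handful of sub-intervals and, using monotonicity of $R^-$ in its argument to pick pessimistic values of $k_{\mathbf{min}}$ and $k_C$ in each slot (in the style of the Case 3 tables of Theorem \ref{h12h2}), verify that the right-hand side stays comfortably above $\pi/12$.

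Once $h_1>\tfrac{\pi}{2}$ is secured on this range, Lemma \ref{lem:left_right_ineq} supplies $h_1>h_3$, and the Abresch--Langer estimate from \cite{AL} gives $T(c)>\pi$. Writing $T = h_1+2h_2+h_3$ then yields
\[
4h_1+2h_2 \;=\; T + (3h_1-h_3) \;>\; T + 2h_1 \;>\; \pi+\pi \;=\; 2\pi,
\]
completing the proof.

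The main obstacle is the rigorous tabulation in the small-$\eta$ case. Numerically the margin is very generous (the lemma already gives roughly $h_1\geq 1.88$ at the worst endpoint $\eta=1.38$, versus $\pi/2\approx 1.57$), so no new analytic input beyond Lemma \ref{lower2nd} is required; but locating $R^-(c)$ and $R^-(\tfrac{\sqrt{3}}{2}c)$ carefully on each sub-interval, exactly as in Theorem \ref{h12h2}, is where the bookkeeping sits.
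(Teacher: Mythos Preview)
Your proof is correct and follows essentially the same split as the paper: handle $\eta\geq 1.38$ via Theorem~\ref{h12h2} together with $h_1>\tfrac{\pi}{3}$, and handle $\eta<1.38$ by showing $h_1$ alone is already large enough. The paper's argument is, however, noticeably shorter on the second range, and it is worth seeing why. First, since $h_1$ is monotone decreasing in $c$ (stated immediately after the definitions of $h_1,h_2,h_3$), no tabulation is needed: a single evaluation of the Lemma~\ref{lower2nd} bound at $c=\hat c=e^{0.19}$ gives $h_1(\hat c)>0.5945\pi$, hence $h_1(c)>0.5945\pi$ for all $c_*\leq c<\hat c$. Second, since $h_2=\Delta\theta_{DA}\geq 0$, one gets $2h_1+h_2\geq 2h_1>2\times 0.5945\pi>\pi$ directly; there is no need to route through $T=h_1+2h_2+h_3>\pi$ and $h_1>h_3$. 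Your detours are valid, but the ``main obstacle'' you identify disappears once you use these two monotonicity facts already recorded in the paper.
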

\begin{proof}
For the case $c\geq \hat c=e^{0.19}$, $\eta=1+2\log c\geq 1.38$.
Using theorem \ref{h12h2}, we obtain
\begin{align}
(2h_1+h_2)(c)> \frac{3}{2}h_1(c)+\frac{1}{2}(h_1+2h_2)(c)>\frac{3}{2}\times\frac{\pi}{3}+\frac{1}{2}\pi=\pi. \notag
\end{align}
On the other hand,  using lemma \ref{lower2nd},
\begin{equation}\label{ineq:h1hatc}
h_1( \hat c)=\int_{k_\mathbf{min}}^{k_2}\frac{2dk}{\sqrt{1.38-V(k)}}+\frac{\pi}{3}
\geq \frac{2}{\sqrt{1+\frac{1}{k_{\mathbf{min}}}}}\left(
\frac{\pi}{2}-\sin^{-1}\left(\frac{1-k_2}{1-k_{\mathbf{min}}}\right)
\right)+\frac{\pi}{3},
\end{equation}
where 
$k_2$ is the curvature of $\gamma_{\hat c}$ at the point $D(\hat
c)$. By using a scientific calculator, $k_{\mathbf{min}}\in
(0.6007,0.6008)$, $k_2\in (0.6871,0.6872)$, and $h_1( \hat c)\geq
0.5945\pi$. For the case  $c< \hat c$, we have $(2h_1+h_2)(c)\geq
2h_1(c)>2h_1(\hat c)>\pi$.
\end{proof}

\begin{thm} \label{thm:existence_3cell}
    If the upper cell is a 3-cell, the type of $\gamma_{\mathbf{up}}$ is $DD\to CB$ on the trajectory and $\theta_{\mathbf{up}}=h_1+h_2+h_3$.
\end{thm}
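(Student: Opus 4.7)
The plan is to enumerate the admissible trajectory paths for $\gamma_{\mathbf{up}}$ and exclude every type but $DD\to CB$. From the setup leading to this theorem, the first AL-curve on $\gamma_{\mathbf{up}}$ is degenerate, so $\psi_{\mathbf{up}}=\frac{\pi}{3}$, forcing the starting point $S$ on the phase plane to be $D$ or $A$; by Proposition \ref{prop:sym_start_end} the ending point $E$ has $\psi=\frac{2\pi}{3}$ and hence lies at $B$ or $C$. Because the upper cell is a 3-cell, $\gamma_{\mathbf{up}}$ consists of exactly two smooth AL-curve segments meeting at a single intermediate triple junction $P$ with an attached ray; by Lemma \ref{same_energy} the image on the phase plane jumps $D\to C$ or $A\to B$ at $P$.

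First I would dispose of the case $S=A$. The degenerate first curve keeps us at $A$, the jump at $P$ sends us to $B$, and the second curve then runs from $B$ to $E\in\{B,C\}$. If $E=B$, the second curve executes a complete period, giving $\theta_{\mathbf{up}}\geq T(c_{\mathbf{up}})>\pi$, contradicting $\theta_{\mathbf{up}}=\pi$. If $E=C$, the second curve only covers the $BC$ arc, contributing $h_2(c_{\mathbf{up}})$; but $2h_2<T<\sqrt{2}\pi$ forces $h_2<\frac{\pi}{\sqrt{2}}<\pi$, again incompatible with $\theta_{\mathbf{up}}=\pi$. Hence $S\neq A$.

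Next I would treat $S=D$. The degenerate first curve stays at $D$, we jump to $C$ at $P$, and the second curve runs from $C$ to $E$. Ending at $E=C$ forces a complete period and is excluded exactly as above. Ending at $E=B$ means the second curve traverses, in counterclockwise order, the arcs $CD$, $DA$ (through $N$), and $AB$, contributing $h_1+h_2+h_3$. Combined with the zero contribution from the degenerate first curve, this gives $\theta_{\mathbf{up}}=h_1+h_2+h_3$ and the trajectory type $DD\to CB$, as claimed.

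The only subtlety is noting that the second curve itself cannot be degenerate, else $\theta_{\mathbf{up}}=0\neq\pi$. The rest is straightforward bookkeeping of the counterclockwise order $N,A,B,M,C,D$ along a single period of the trajectory, which follows from the ODEs $\frac{dR}{ds}=\cos\psi$ and $\frac{d\psi}{ds}=(R-\frac{1}{R})\sin\psi$.
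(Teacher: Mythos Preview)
Your proof is correct and follows essentially the same case-splitting as the paper: both arguments branch on whether the degenerate first curve sits at $D$ or $A$, then on whether the ending point is $B$ or $C$, using $\theta_{\mathbf{up}}=\pi$ together with $T>\pi$ and $h_2<T/2<\pi$ to rule out every combination except $DD\to CB$. One minor remark: the paper's proof goes slightly beyond the stated theorem and also establishes existence of a solution, noting that $(h_1+h_2+h_3)(c_*)=T(c_*)>\pi$ while $\lim_{c\to\infty}(h_1+h_2+h_3)=\frac{2\pi}{3}<\pi$, so by the intermediate value theorem some $c$ achieves $\theta_{\mathbf{up}}=\pi$; this existence is used at the end of Section~7 to actually exhibit the heart shrinker, so you may want to include it.
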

\begin{proof}
If the first curve is $DD$, which is degenerate, the second curve
starts from $C$. Since the second curve is neither degenerate nor
contain a complete period, the end point most be the point $B$. The
second curve is the $CB$ arc on the phase plane. In this case,
$\theta_{\mathbf{up}}=h_1+h_2+h_3$. Since
$\underset{c\to\infty}\lim(h_1+h_2+h_3)(c)=\frac{2\pi}{3}$ and
$(h_1+h_2+h_3)(c_*)=T(c_*)>\pi$, using intermediate value theorem,
there exist a solution.

If the first curve is $AA$, the second curve starts from $B$. If the
ending point is $C$,
$\theta_{\mathbf{up}}=h_2<\frac{T(c_{\mathbf{up}})}{2}<\pi$ is too
small. Otherwise, it will form a complete loop.
\end{proof}

\begin{figure}[h]
  \centering
    \includegraphics[width=3cm]{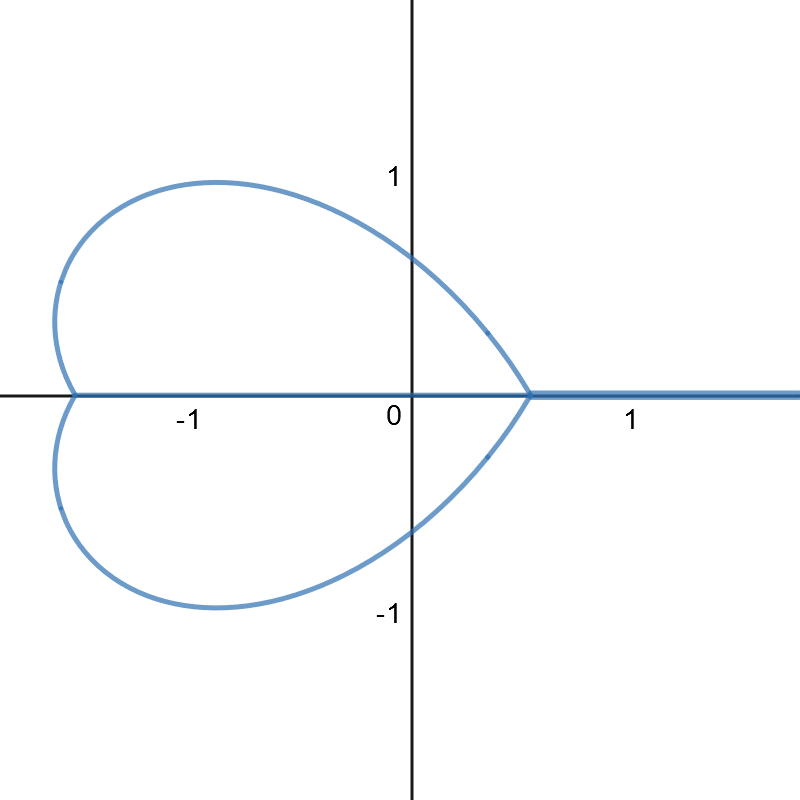}
  \caption{Degenerate case: heart. The ray on $x$-axis has multiplicity 2.}
\end{figure}

\begin{thm} \label{thm:4cell}
If the upper cell is a 4-cell
\begin{enumerate}
    \item If the first curve is $DD$, the type of $\gamma_{\mathbf{up}}$ should be $DD\to CA\to BC$ and $\theta_{\mathbf{up}}=h_1+2h_2$.
    \item If the first curve is $AA$, the types of $\gamma_{\mathbf{up}}$ are the following two cases:
    \begin{itemize}
        \item $AA\to BA\to BB$ and $\theta_{\mathbf{up}}=h_1+2h_2$.
        \item $AA\to BA\to BC$ and $\theta_{\mathbf{up}}=h_1+3h_2$. In this case, the energy belongs to $I_A$.
    \end{itemize}
\end{enumerate}
\end{thm}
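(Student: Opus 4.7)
My plan is to enumerate, for each starting type ($DD$ or $AA$), all admissible placements of the remaining two AL-pieces of $\gamma_{\mathbf{up}}$, and then to use the constraint $\theta_{\mathbf{up}}=\pi$ (forced by $\gamma_{\mathbf{in}}$ being a line segment through the origin in the degenerate case) together with the inequalities already established to eliminate all but the listed types. The admissibility rules are: a middle piece cannot be degenerate (two rays cannot form a $\frac{\pi}{3}$ angle off the origin, as noted at the start of Section 7); the last piece may be degenerate only if both its endpoints sit at $\psi=\frac{2\pi}{3}$, hence must be $CC$ or $BB$; at each internal triple junction the phase point jumps $D\to C$ or $A\to B$ (Lemma \ref{same_energy}); and no single piece traverses a full period, since $T(c)>\pi$ would already force $\theta_{\mathbf{up}}>\pi$.

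Enumerating under these rules gives, for start $DD$, four candidates $DD\to CD\to CC$, $DD\to CA\to BB$, $DD\to CA\to BC$, $DD\to CD\to CB$ with $\theta_{\mathbf{up}}\in\{h_1,\;h_1+h_2,\;h_1+2h_2,\;2h_1+h_2+h_3\}$; and for start $AA$, four candidates $AA\to BD\to CC$, $AA\to BA\to BB$, $AA\to BA\to BC$, $AA\to BD\to CB$ with $\theta_{\mathbf{up}}\in\{h_1+h_2,\;h_1+2h_2,\;h_1+3h_2,\;T+h_1\}$. The immediate eliminations are: $\theta_{\mathbf{up}}=h_1<\pi$ by Corollary \ref{cor:h1upper}; $\theta_{\mathbf{up}}=2h_1+h_2+h_3\geq 2h_1+h_2>\pi$ by Lemma \ref{lem:2h1h2}; and $\theta_{\mathbf{up}}=T+h_1>\pi$ since $T>\pi$. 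This leaves the two $h_1+h_2$ candidates $DD\to CA\to BB$ and $AA\to BD\to CC$ as the delicate ones.

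The key step is the inequality $h_1+h_2<\pi$ for every admissible $c$. Decompose $h_2=\Delta\theta_{DN}+\Delta\theta_{NA}=:a+b$ and use the symmetry of the trajectory under $\psi\mapsto\pi-\psi$ to identify $a=\Delta\theta_{MC}$. Applying Lemma \ref{lem:left_right_ineq} on the $\psi$-range $[\sin^{-1}(1/c),\frac{\pi}{3}]$ gives $a>b$, hence
\[
h_1+h_2 \;=\; h_1+a+b \;<\; h_1+2a \;=\; \Delta\theta_{MN}(c) \;<\; \pi
\]
by Proposition \ref{thm:half_upper_bound}. This rules out both remaining $h_1+h_2$ candidates, so the surviving types are exactly the three listed, with the stated angle formulas.

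Finally, for $AA\to BA\to BC$ I must show $c_{\mathbf{up}}\in I_A$. The same decomposition gives $h_2=a+b>2b=2\Delta\theta_{NA}$; combined with Proposition \ref{prop:range_for_gamma}'s estimate $(h_1+2h_2+2\Delta\theta_{NA})(c)>\pi$ for $c\geq\bar c$, this forces $h_1+3h_2>\pi$ on $[\bar c,\infty)$. Since a solution requires $h_1+3h_2=\pi$, necessarily $c_{\mathbf{up}}\in[c_*,\bar c)\subset I_A$. The main technical obstacle is the comparison $h_1+h_2<\pi$, which rests on the left-versus-right asymmetry of the phase-plane trajectory; every other elimination reduces to a bound already proved earlier in the paper.
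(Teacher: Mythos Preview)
Your proof is correct and follows essentially the same approach as the paper: the same case enumeration, the same eliminations via Corollary~\ref{cor:h1upper}, Lemma~\ref{lem:2h1h2}, and $T>\pi$, and the same key inequality $h_1+h_2<\Delta\theta_{MN}<\pi$ obtained from Lemma~\ref{lem:left_right_ineq} and Proposition~\ref{thm:half_upper_bound}. The only cosmetic difference is that for the $AA\to BA\to BC$ case the paper simply invokes Proposition~\ref{prop:range_for_gamma} directly (noting that this pattern is of the form $SA\to BA\to BE$ with $R_{\mathbf{end}}<1$), whereas you re-derive the inequality $h_2>2\Delta\theta_{NA}$ by hand; both arguments are the same in substance.
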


\begin{proof}
(1) If the first curve is $DD$, which is degenerate, the second
curve starts at $C$.
\begin{itemize}
\item If the second curve ends at $D$, the third curve starts at $C$. If it immediately end at $C$, $\theta_{\mathbf{up}}=h_1<\pi$ from corollary \ref{cor:h1upper}. If the ending point is $B$, we have $\theta_{\mathbf{up}}=2h_1+h_2+h_3>2h_1+h_2>\pi$ from lemma \ref{lem:2h1h2}. Both cases are impossible.

\item If the second curve ends at $A$, the third curve starts at $B$. If it immediately ends at $B$, using lemma \ref{lem:left_right_ineq} and proposition \ref{thm:half_upper_bound}, $\theta_{\mathbf{up}}=h_1+h_2<\Delta\theta_{MN}<\pi$. If it ends at $C$, $\theta_{\mathbf{up}}=h_1+2h_2$. Since $2h_1+4h_2=2\pi$ has a unique solution which corresponds to the lens in the classification of regular shrinker with 1 closed region in \cite{CG}, we have existence and uniqueness for this case.
\end{itemize}
(2) If the first curve is $AA$,  the second curve starts at $B$.
\begin{itemize}
\item If the second curve ends at $D$, the third curve starts at $C$. If it immediately ends at $C$, $\theta_{\mathbf{up}}=h_1+h_2<\Delta\theta_{MN}<\pi$. This is too small. If the third curve ends at $B$, $\theta_{\mathbf{up}}=2h_1+2h_2+h_3$. This is is more than a period. Both cases are impossible.

\item If the second curve ends at $A$, the third curve starts from $B$. If it immediately ends at $B$, $\theta_{\mathbf{up}}=h_1+2h_2$. Again, from \cite{CG}, we have the existence and uniqueness. If the third curve ends at $C$, $\theta_{\mathbf{up}}=h_1+3h_2$. Since the upper curve is $SA\to BA\to BE$, proposition \ref{prop:range_for_gamma} is applicable and we have $c_{\mathbf{up}}\in I_A$. Using lemma \ref{lem:increaseh12h2}, we obtain $(h_1+3h_2)(c)$ is increasing on $I_A$.
Using corollary \ref{cor:h1upper} and proposition
\ref{prop:range_for_gamma},
\begin{equation}
\begin{split}
&(h_1+3h_2)(c_*)=h_1(c_*)<\pi,\\
&(h_1+3h_2)(\bar c)>(h_1+2h_2)(\bar c)+2\Delta \theta_{NA}(\bar c)>\pi.
\end{split}
\end{equation}
Therefore, there exists a unique number $c_{\mathbf{up}}$ such
that   $(h_1+3h_2)(c_{\mathbf{up}})=\pi$.
\end{itemize}
\end{proof}

\begin{figure}[h]
  \centering
    \includegraphics[width=3cm]{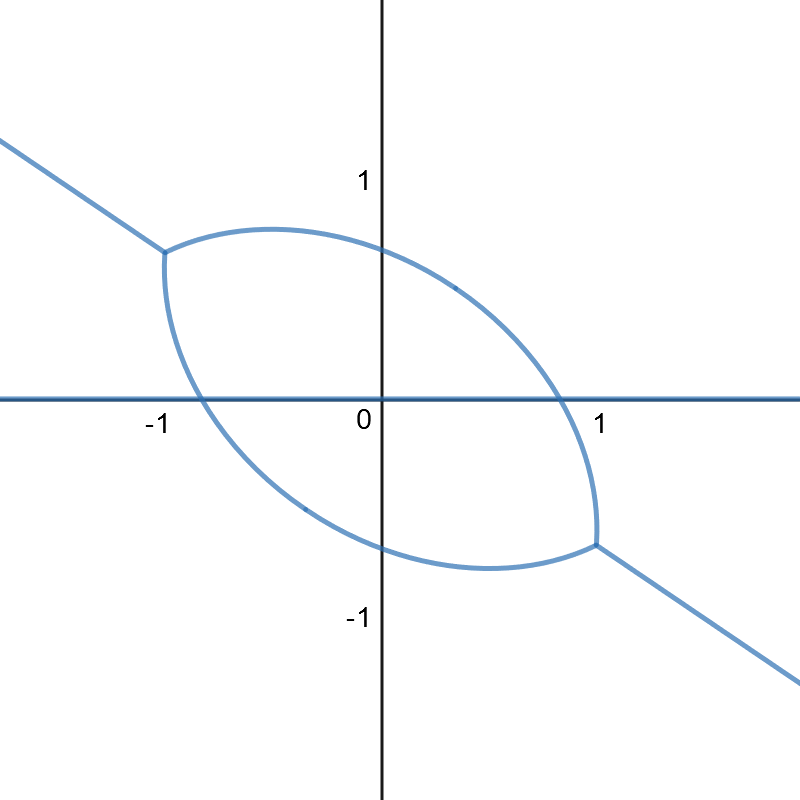}
    \includegraphics[width=3cm]{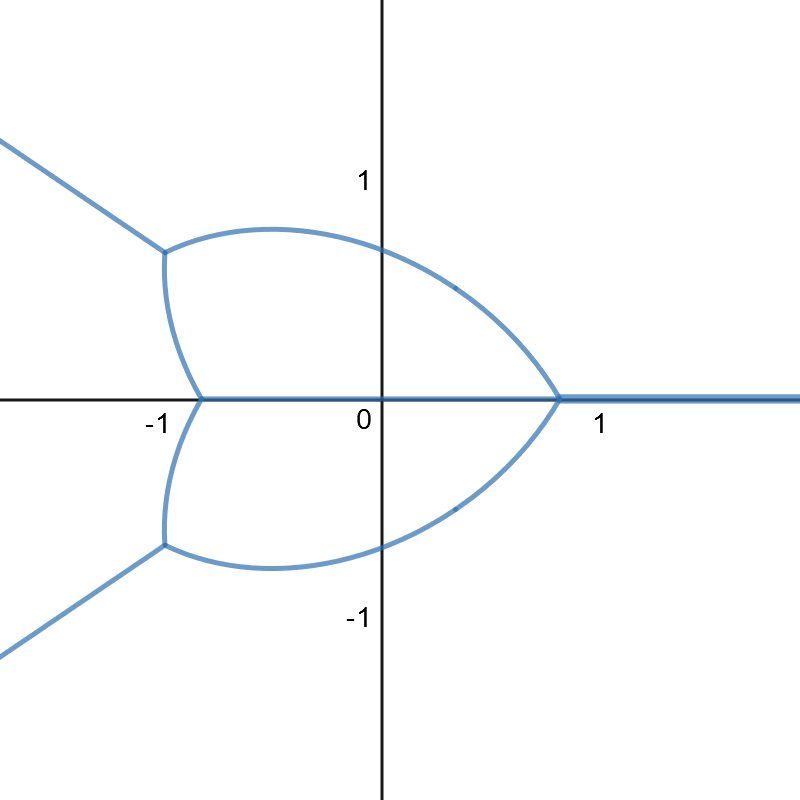}
    \includegraphics[width=3cm]{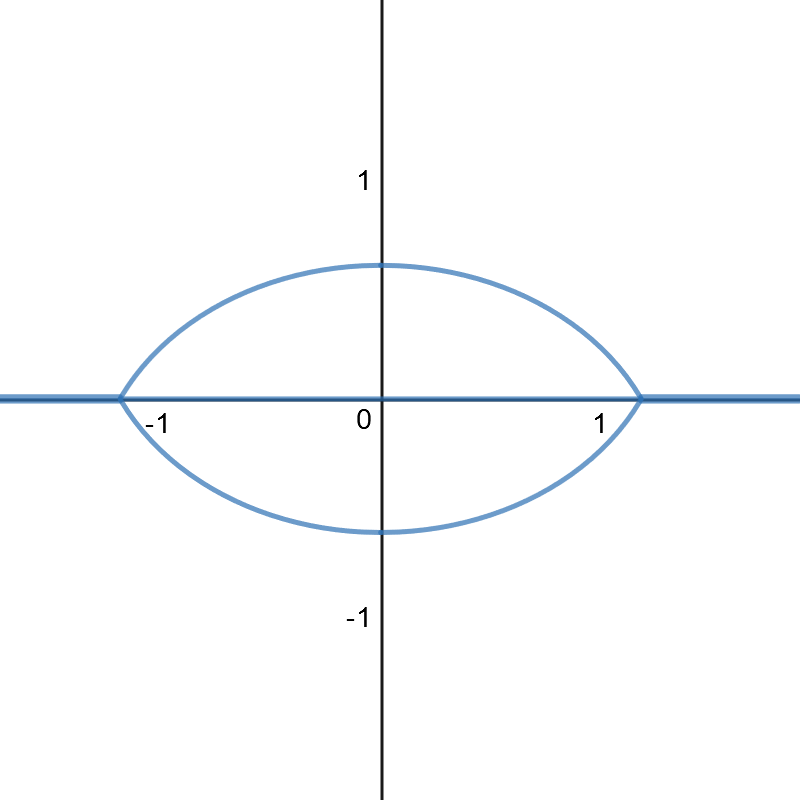}
    \includegraphics[width=3cm]{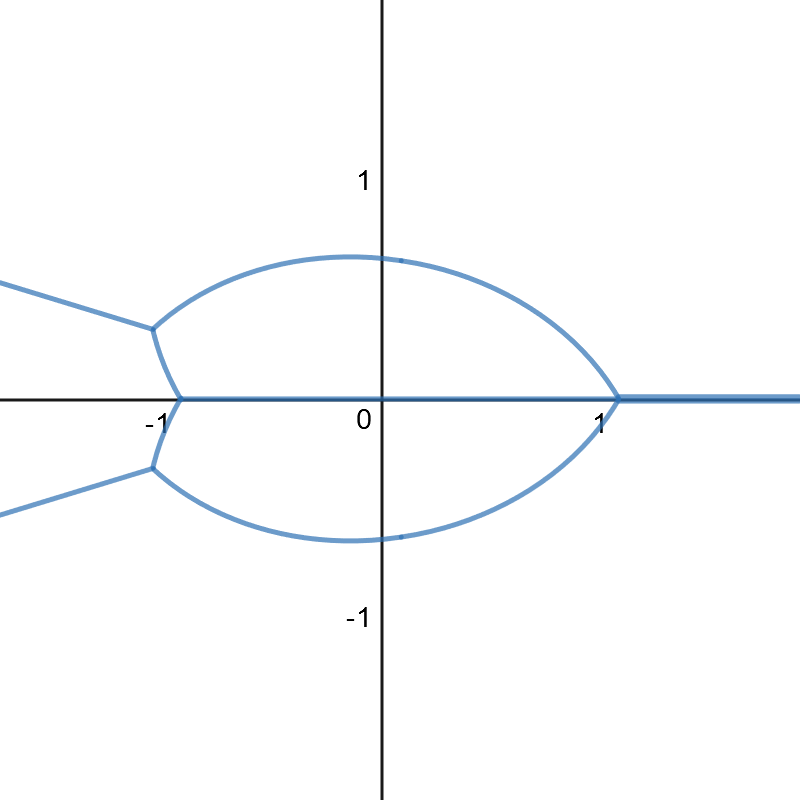}
  \caption{Degenerate cases: broken lens, cat, half lens, fox.}
\end{figure}
\begin{rmk}
    The broken lens is the only degenerate regular shinker with multiplicity 1.
\end{rmk}

\begin{thm} \label{thm:5cell}
If the upper cell is a 5-cell, the type of $\gamma_{\mathbf{up}}$ is
$DD\to CD\to CD\to CC$ and $\theta_{\mathbf{up}}=2h_1$.
\end{thm}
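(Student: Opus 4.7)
The plan is to enumerate the admissible types of $\gamma_{\mathbf{up}}$ for a $5$-cell and then invoke lemma \ref{lem:2h1h2} to eliminate all but one.

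Under the degenerate hypothesis, the first AL-curve has $\psi_{\mathbf{up}}=\pi/3$ and is degenerate, so the starting point on the phase plane lies at $D$ or $A$; by proposition \ref{prop:sym_start_end}, $\psi_{\mathbf{end,up}}=2\pi/3$, so the ending point lies at $B$ or $C$. A $5$-cell consists of four AL-curves joined by three ray triple junctions, with the two middle AL-curves necessarily non-degenerate and the last AL-curve optionally degenerate. At each ray triple junction, lemma \ref{same_energy} forces an $A\to B$ or $D\to C$ jump on the phase plane; moreover, no single AL-curve can cover a complete period (since $T(c)>\pi\geq\theta_{\mathbf{up}}$). I will list, case by case, all sequences of four arcs meeting these constraints (starting from $D$ or from $A$) and compute the corresponding $\theta_{\mathbf{up}}$ as a sum of the $h_i$'s.

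The enumeration yields exactly one sequence, $DD\to CD\to CD\to CC$, with $\theta_{\mathbf{up}}=2h_1$; every other admissible sequence satisfies $\theta_{\mathbf{up}}\geq 2h_1+h_2$. For instance, replacing the terminal $CC$ by $CB$ adds $h_1+h_2+h_3$; replacing one $CD$ by $CA$ forces the next start at $B$, again adding at least $h_2$; and the case starting at $A$ is parallel, the minimal contribution being $AA\to BD\to CD\to CC$ with total $2h_1+h_2$. By lemma \ref{lem:2h1h2}, $(2h_1+h_2)(c)>\pi$ for all $c\geq c_*$, which together with $\theta_{\mathbf{up}}=\pi$ (forced by $\gamma_{\mathbf{in}}$ being a line through the origin) rules out every other case. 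Existence of a shrinker of the surviving type follows from the intermediate value theorem: $h_1(c_*)>\pi/2$ (since $h_1(c_*)+h_3(c_*)=T(c_*)>\pi$ and $h_1>h_3$ by lemma \ref{lem:left_right_ineq}) while $h_1(c)\to\pi/3$ as $c\to\infty$, so there is a unique $c$ with $2h_1(c)=\pi$.

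The main obstacle is the enumeration itself: one must verify that the starting/ending constraints, the $A\to B$ / $D\to C$ jump parity, and the no-full-period condition leave only $DD\to CD\to CD\to CC$ giving $\theta_{\mathbf{up}}=2h_1$, with every alternative sequence accumulating enough arc length to exceed $2h_1+h_2$. Once the list is complete, lemma \ref{lem:2h1h2} closes the argument cleanly.
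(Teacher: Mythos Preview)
Your proposal is correct and follows essentially the same approach as the paper's own proof: both enumerate the admissible arc sequences for the degenerate $5$-cell (first curve degenerate at $D$ or $A$, intermediate curves nondegenerate, jumps $A\to B$ or $D\to C$), invoke lemma \ref{lem:2h1h2} to eliminate every sequence with $\theta_{\mathbf{up}}\geq 2h_1+h_2>\pi$, and establish existence and uniqueness for $DD\to CD\to CD\to CC$ via the monotonicity of $h_1$ together with $h_1(c_*)>\pi/2$ and $\lim_{c\to\infty}h_1(c)=\pi/3$. Your justification of $h_1(c_*)>\pi/2$ from $h_2(c_*)=0$, $T(c_*)>\pi$, and $h_1>h_3$ is exactly the implicit reasoning behind the paper's assertion.
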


\begin{proof}
If the first curve is $DD$,  which is degenerate, the second curve
starts from $C$.
 \begin{itemize}
    \item If the second curve ends at $D$, the third curve starts at $C$ and ends at either $D$ or $A$. If it ends at $D$, suppose the fourth curve is not degenerate, $\theta_{\mathbf{up}}\geq 3h_1>\pi$. Therefore, the fourth curve is degenerate and $\theta_{\mathbf{up}}=2h_1$. Note that $h_1(c_*)>\frac{\pi}{2}$, $\underset{c\to\infty}{\lim} h_1(c)=\frac{\pi}{3}$ and $h_1$ is decreasing, we have existence and uniqueness in this case. If the third curve ends at $A$, $\theta_{\mathbf{up}}\geq2h_1+h_2>\pi$ from lemma \ref{lem:2h1h2}. This is impossible.

    \item If the second curve ends at $A$, the third curve must start from $B$ and end at either $D$ or $A$ and $\theta_{\mathbf{up}}\geq 2h_1+2h_2>\pi$. This is impossible.
\end{itemize}
If the first curve is $AA$,  the second curve starts from $B$.
\begin{itemize}
    \item If the second curve ends at $D$, the third curve starts at $C$ and ends at $D$. We have $\theta_{\mathbf{up}}\geq2h_1+h_2>\pi$  from lemma \ref{lem:2h1h2}. This is impossible.

    \item If the second curve ends at $A$, the third curve start at $B$ and ends at $D$ or $A$ and need to cross $CD$ arc. Therefore $\theta_{\mathbf{up}}\geq 2h_1+2h_2>\pi$ from lemma \ref{lem:2h1h2}, which is impossible.
\end{itemize}

\end{proof}
\begin{figure}[h]
  \centering
    \includegraphics[width=3cm]{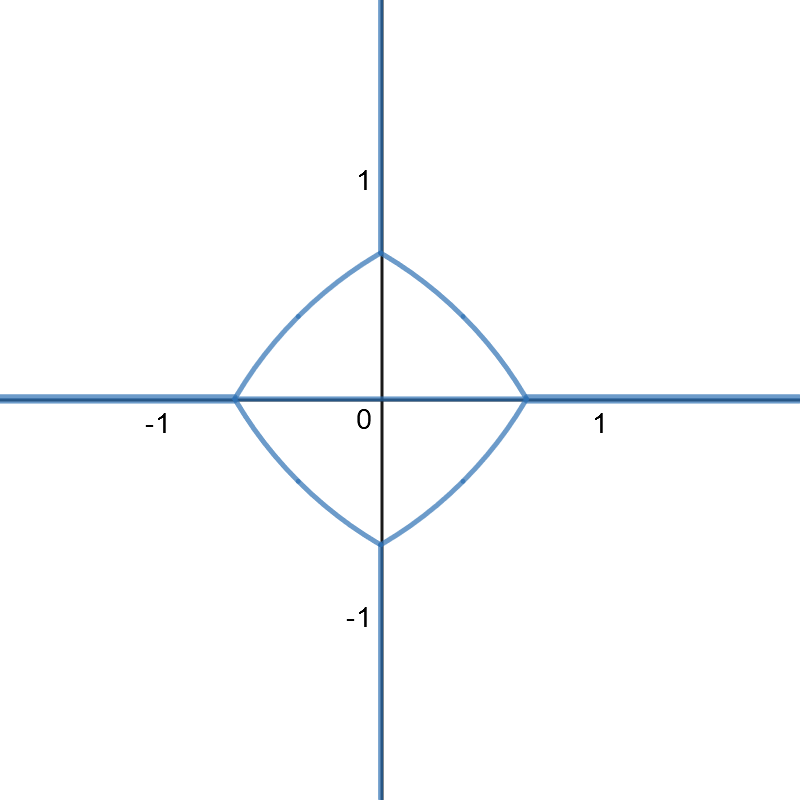}
  \caption{Degenerate case: half 4-ray star}
\end{figure}

\begin{lem} \label{lem:combine}
The degenerate regular shrinkers are either symmetric with respect
to $x$-axis or symmetric with respect to the origin.
\end{lem}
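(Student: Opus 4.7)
I would prove this by a case analysis over the admissible types of $\gamma_{\mathbf{up}}$ established in Theorems \ref{thm:existence_3cell}--\ref{thm:5cell}, showing in each case that $\gamma_{\mathbf{down}}$ is forced to coincide either with the reflection of $\gamma_{\mathbf{up}}$ across the $x$-axis or with its rotation by $\pi$ about the origin.

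The first step is to verify that $\gamma_{\mathbf{down}}$ is constrained by the same classification as $\gamma_{\mathbf{up}}$. At the starting triple junction, the Herring angle condition combined with $\psi_{\mathbf{in}}=\pi$ (since $\gamma_{\mathbf{in}}$ is a line segment through the origin) and $\psi_{\mathbf{up}}=\pi/3$ (degenerate first edge) forces $\psi_{\mathbf{down}}=\pi/3$ in the clockwise-positive convention for the lower curve, and by Proposition \ref{prop:sym_start_end} analogously $\psi_{\mathbf{down,end}}=2\pi/3$. Together with $c_{\mathbf{down}}=c_{\mathbf{up}}$ and $\theta_{\mathbf{down}}=\pi$, this makes the enumerations in Theorems \ref{thm:existence_3cell}--\ref{thm:5cell} apply verbatim to $\gamma_{\mathbf{down}}$, and since the triple junctions are shared, both curves must have the same starting/ending pair $(X,Y)\in\{A,D\}\times\{B,C\}$ on the phase plane.

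The second step is to decide which types of $\gamma_{\mathbf{up}}$ and $\gamma_{\mathbf{down}}$ are compatible for each $(X,Y)$. For the pairs $(D,B)$, $(A,B)$, $(A,C)$, the enumeration singles out a unique admissible type (respectively the 3-cell $DD\to CB$, the 4-cell $AA\to BA\to BB$, and the 4-cell $AA\to BA\to BC$), so $\gamma_{\mathbf{up}}$ and $\gamma_{\mathbf{down}}$ necessarily carry the same type. The remaining pair $(D,C)$, where $R_{\mathbf{start}}=R_{\mathbf{end}}<1$, admits two types: the 4-cell $DD\to CA\to BC$ with energy condition $h_1(c)+2h_2(c)=\pi$, and the 5-cell $DD\to CD\to CD\to CC$ with $2h_1(c)=\pi$. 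Ruling out a mixed pairing is the main obstacle: such a configuration would require a single common energy $c$ satisfying both equations, hence $h_1(c)=\pi/2$ and $h_2(c)=\pi/4$. I expect to preclude this coincidence by numerical estimates in the style of Theorem \ref{h12h2} and Lemma \ref{lower2nd}, locating the unique solution of $2h_1(c)=\pi$ and bounding $h_2$ there strictly away from $\pi/4$.

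Once the types of $\gamma_{\mathbf{up}}$ and $\gamma_{\mathbf{down}}$ are forced to agree, the two curves share energy, starting/ending data, and triple-junction crossing pattern. Uniqueness for AL-curves along a fixed trajectory with prescribed starting point and sequence of crossings then identifies $\gamma_{\mathbf{down}}$ with one of the natural isometric images of $\gamma_{\mathbf{up}}$ described in the preamble of Section 7: the $x$-axis reflection is always an admissible candidate and hence coincides with $\gamma_{\mathbf{down}}$, giving $x$-axis symmetry of the whole network. When additionally $R_{\mathbf{start}}=R_{\mathbf{end}}$, the rotation by $\pi$ about the origin is also an admissible candidate, in which case the network may alternatively be origin symmetric, completing the proof.
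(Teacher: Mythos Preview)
Your case analysis has a genuine gap at the step where you claim that ``the enumerations in Theorems \ref{thm:existence_3cell}--\ref{thm:5cell} apply verbatim to $\gamma_{\mathbf{down}}$.'' Those theorems are proved under the hypothesis that the \emph{first} AL-curve leaving the starting point is degenerate. That hypothesis is in force for $\gamma_{\mathbf{up}}$ by the WLOG choice made at the beginning of Section 7, but nothing forces $\gamma_{\mathbf{down}}$ to carry any degenerate edge at all. The conditions you derive for $\gamma_{\mathbf{down}}$ --- namely $\psi_{\mathbf{down}}=\pi/3$ at the start, $\psi_{\mathbf{down}}=2\pi/3$ at the end, energy $c_{\mathbf{down}}=c_{\mathbf{up}}$, and $\theta_{\mathbf{down}}=\pi$ --- are also satisfied by the \emph{nondegenerate} upper 4-cell $DA\to BA\to BC$ of Theorem \ref{thm:existence_nondegenerate}, with energy condition $(h_1+4h_2)(c)=\pi$. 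This curve starts at $D$ and ends at $C$, so it lands squarely in your $(D,C)$ bucket as a third admissible type that your enumeration misses.

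Consequently the $(D,C)$ case has three candidates rather than two, and you must additionally rule out the mixed pairings (degenerate 4-cell)/(nondegenerate 4-cell), requiring $h_1+2h_2=h_1+4h_2=\pi$, and (degenerate 5-cell)/(nondegenerate 4-cell), requiring $2h_1=h_1+4h_2=\pi$. The paper's proof lists all six types explicitly (its case (3) is precisely the nondegenerate curve you omit) and eliminates the four cross-type coincidences $c_1=c_2$, $c_1=c_3$, $c_2=c_3$, $c_4=c_5$ one by one via the estimates of Theorem \ref{h12h2}, Corollary \ref{cor:h1upper}, and the inequalities (\ref{ineq:h3barc}), (\ref{ineq:h1barc}), (\ref{ineq:h1hatc}). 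Your outline handles only the analogue of $c_1=c_2$; the others need the same kind of treatment.
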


\begin{proof}
From theorem \ref{thm:existence_nondegenerate},
\ref{thm:existence_3cell}, \ref{thm:4cell}, \ref{thm:5cell}, the
possible curves $\gamma_{\mathbf{up}}$ in the (degenerate) regular
shrinker with 2 closed regions are the following six cases:
\begin{enumerate}
\item $DD\to CA\to BC $ with $h_1+2h_2=\pi$ and the energy $c_1$.
\item $DD\to CD\to CD\to DD$ with $2h_1=\pi$ and the energy $c_2$.
\item $DA\to BA\to BC$ with $h_1+4h_2=\pi$ and the energy $c_3$.
\item $DD\to CB$ with $h_1+h_2+h_3=\pi$ and the energy $c_4$.
\item $AA\to BA \to BC $ with $h_1+3h_2=\pi$ and the energy $c_5$.
\item $AA\to BA\to BB$ with  $h_1+2h_2=\pi$ and the energy $c_6$.
\end{enumerate}
In the cases (1), (2), and (3),
$R_{\mathbf{start}}=R_{\mathbf{end}}<1$. In the case (4) and (5), we
have either $R_{\mathbf{start}}<1<R_{\mathbf{end}}$ or
$R_{\mathbf{start}}>1>R_{\mathbf{end}}$. In the case (6), we have
$R_{\mathbf{start}}=R_{\mathbf{end}}>1$. If there exists a
degenerate regular shrinker with 2 closed regions, which
$\gamma_{\mathbf{up}}$ and $\gamma_{\mathbf{down}}$ are of different
types, it should be one of the following cases: $c_1=c_2$,
$c_1=c_3$, $c_2=c_3$, $c_4=c_5$.

If $c_1=c_2$ and $(h_1+2h_2)(c_1)=2h_1(c_2)=\pi$, from theorem
\ref{h12h2}, we have $c_1<e^{0.19}$. Since $h_1$ is decreasing, from
the inequality (\ref{ineq:h1hatc}),
$\frac{\pi}{2}=h_1(c_2)=h_1(c_1)>h_1(e^{0.19})>0.5945\pi$. This is a
contradiction.

If $c_1=c_3$ and $(h_1+2h_2)(c_1)=(h_1+4h_2)(c_3)=\pi$,
$h_2(c_1)=h_2(c_3)=0$, and $h_1(c_1)=\pi$. We obtain a contradiction
from corollary \ref{cor:h1upper}.

If $c_2=c_3$ and $2h_1(c_2)=(h_1+4h_2)(c_3)=\pi$, we obtain
$h_1(c_2)=\frac{\pi}{2}$ and $h_2(c_2)=\frac{\pi}{8}$. From theorem
\ref{h12h2}, it gives $0.7789\pi<(h_1+2h_2)(c_2)=0.75\pi$. This is a
contradiction.

If $c_4=c_5$ and $(h_1+h_2+h_3)(c_4)=(h_1+3h_2)(c_5)=\pi$, from
theorem \ref{thm:4cell}, $c_5\in I_A$. Using lemma
\ref{lem:increaseh12h2}, the function $h_2$ is increasing on $I_A$
and the function $(h_1+h_2+h_3)(c)=T(c)-h_2(c)$ is decreasing on
$I_A$ from that $T$ is decreasing. Therefore, using the inequalities
(\ref{ineq:h3barc}) and (\ref{ineq:h1barc}), we obtain
$\pi=(h_1+h_2+h_3)(c_5)\geq (h_1+h_2+h_3)(\bar
c)>0.7027\pi+0+\frac{\pi}{3}>\pi$. This is a contradiction.
\end{proof}
By combining theorems \ref{thm:existence_3cell}, \ref{thm:4cell},
\ref{thm:5cell}, and lemma \ref{lem:combine}, we find some
degenerate regular shrinkers with 2 closed regions, which are the
heart, the broken lens, the cat, the half lens, the fox, the half
4-ray star.


\begin{thebibliography}{9}

\bibitem{AGH} A. Azouani, M. Georgi, J. Hell, N. Jangle, A. Koeller, T. Marxen, S. Ritthaler, M. S\'{a}ez, O. Schn\"{u}rer, F. Schulze,  B. Smith, \emph{Evolution of convex lens-shaped
networks under the curve shortening flow.} Trans. Amer. Math. Soc.
363 (2011), no. 5, 2265-2294.

\bibitem{AL} U.Abresch, J. Langer \emph{The normalized curve shortening flow and homothetic solutions.} Journal of Differential Geometry 23.2 (1986): 175-196.

\bibitem{B} K. Brakke, \emph{The motion of a surface by its mean curvature.} Thesis (Ph.D.)¡VPrinceton University. 1975. 90 pp.

\bibitem{BHM} P. Baldi, E. Haus, C. Mantegazza, \emph{Networks self-similarly moving by curvature with two triple junctions.} Atti Accad. Naz. Lincei Rend. Lincei Mat. Appl. 28 (2017), no. 2, 323-338.

\bibitem{BHM2} P. Baldi, E. Haus, C. Mantegazza \emph{Non$¡V$existence of Theta$¡V$shaped self$¡V$similarly shrinking networks moving by curvature.} Communications in Partial Differential Equations (2018): 1-25.

\bibitem{BN} G. Bellettini, M. Novaga, \emph{Curvature evolution of nonconvex lens-shaped domains.} J. Reine Angew. Math. 656 (2011), 17-46.

\bibitem{BR} L. Bronsard, F. Reitich, \emph{On three-phase boundary motion and the singular limit of a vector-valued Ginzburg-Landau equation}, Arch. Rational Mech. Anal. 124 (1993), no. 4, 355-379.

\bibitem{C} J.-E. Chang, \emph{1-Dimensional solutions of the $\lambda $-self shrinkers.} Geometriae Dedicata 189.1 (2017): 97-112.

\bibitem{CG} X. Chen, J.-S. Guo \emph{Self-similar solutions of a 2-D multiple-phase curvature flow.} Physica D: Nonlinear Phenomena 229.1 (2007): 22-34.

\bibitem{H} J. Hattenschweiler, \emph{Mean curvature flow of networks with triple junctions in the plane.} Preprint (2007).

\bibitem{Hu} G. Huisken,
\emph{Asymptotic behavior for singularities of the mean curvature
flow.} J. Differential Geom. 31 (1990), no. 1, 285-299.

\bibitem{INS} T. Ilmanen, A. Neves, F. Schulze, \emph{On short time existence for the planar network flow.} (2014) J. Diff. Geom. to appear, https://arxiv.org/pdf/1407.4756.pdf.

\bibitem{MMN} A. Magni, C. Mantegazza, M. Novaga, \emph{Motion by curvature of planar networks, II.} Ann. Sc. Norm. Super. Pisa Cl. Sci. (5) 15 (2016), 117-144.

\bibitem{MNP} C. Mantegazza, M. Novaga, A. Pluda, \emph{Motion by curvature of networks with two triple junctions.} Previously published in 2 (2016). Geom. Flows 2 (2017), 18-48.

\bibitem{MNP2} C. Mantegazza, M. Novaga, A. Pluda, \emph{Lectures on curvature flow of networks.}, to appear on Springer INDAM Series, 2018. http://cvgmt.sns.it/media/doc/paper/3727/Lecture-notes.pdf

\bibitem{MNPS} C. Mantegazza, M. Novaga, A. Pluda, F. Schulze, \emph{Evolution of networks with multiple junctions.} arXiv preprint arXiv:1611.08254.(2016)

\bibitem{MNT} C. Mantegazza, M. Novaga, V. Tortorelli, \emph{Motion by curvature of planar networks.} Ann. Sc. Norm. Super. Pisa Cl. Sci. (5) 3 (2004), no. 2, 235-324.

\bibitem{P} A. Pluda, \emph{Evolution of spoon-shaped networks.} Netw. Heterog. Media 11 (2016), no. 3, 509-526.

\end{thebibliography}
\end{document}